\newcommand{\va}{{\mathbf{a}}}
\newcommand{\vb}{{\mathbf{b}}}
\newcommand{\vc}{{\mathbf{c}}}
\newcommand{\vf}{{\mathbf{f}}}
\newcommand{\vp}{{\mathbf{p}}}
\newcommand{\vt}{{\mathbf{t}}}
\newcommand{\vx}{{\mathbf{x}}}
\newcommand{\vy}{{\mathbf{y}}}
\newcommand{\vz}{{\mathbf{z}}}
\newcommand{\vA}{{\mathbf{A}}}
\newcommand{\vQ}{{\mathbf{Q}}}
\newcommand{\vX}{{\mathbf{X}}}
\newcommand{\cB}{{\mathcal{B}}}
\newcommand{\cL}{{\mathcal{L}}}
\newcommand{\cN}{{\mathcal{N}}}
\newcommand{\cP}{{\mathcal{P}}}
\newcommand{\cX}{{\mathcal{X}}}
\newcommand{\vareps}{\varepsilon}
\newcommand{\RR}{\mathbb{R}} % real
\newcommand{\vzero}{\mathbf{0}} % 0 vector
\newcommand{\vone}{{\mathbf{1}}} % 1 vector
\newcommand{\dist}{\mathrm{dist}}    % distance
\newcommand{\dom}{{\mathrm{dom}}} % domain
\newcommand{\st}{\mbox{ s.t. }}
\DeclareMathOperator*{\argmin}{arg\,min} % argmin
\DeclareMathOperator*{\Min}{minimize}
\newcommand{\bc}{\begin{center}}
\newcommand{\ec}{\end{center}}
\newcommand{\bdm}{\begin{displaymath}}
\newcommand{\edm}{\end{displaymath}}
\newcommand{\beq}{\begin{equation}}
\newcommand{\eeq}{\end{equation}}
\newcommand{\bfl}{\begin{flushleft}}
\newcommand{\efl}{\end{flushleft}}
\newcommand{\bt}{\begin{tabbing}}
\newcommand{\et}{\end{tabbing}}
\newcommand{\beqn}{\begin{eqnarray}}
\newcommand{\eeqn}{\end{eqnarray}}
\newcommand{\beqs}{\begin{align*}} % no equation numbers
\newcommand{\eeqs}{\end{align*}}  % no equation numbers
\newtheorem{assumption}{Assumption}
\begin{document}
%%%%%%%%%%%%%%%%

\title{Augmented Lagrangian based first-order methods for convex-constrained programs with weakly-convex objective}

\titlerunning{First-order ALM for constrained weakly convex problems}

\author{Zichong Li \and Yangyang Xu}

\institute{Z. Li, Y. Xu \at Department of Mathematical Sciences, Rensselaer Polytechnic Institute, Troy, NY 12180\\
\email{\{liz19, xuy21\}@rpi.edu}}

\date{\today}

\maketitle

\begin{abstract}
First-order methods (FOMs) have been widely used for solving large-scale problems. A majority of existing works focus on problems without constraint or with simple constraints. Several recent works have studied FOMs for problems with complicated functional constraints. In this paper,  
we design a novel augmented Lagrangian (AL) based FOM for solving problems with non-convex objective and convex constraint functions. The new method follows the framework of the proximal point (PP) method. On approximately solving PP subproblems, it mixes the usage of the inexact AL method (iALM) and the quadratic penalty method, while the latter is always fed with estimated multipliers by the iALM. We show a complexity result of $O(\vareps^{-\frac{5}{2}}|\log\vareps|)$ for the proposed method to achieve an $\vareps$-KKT point. This is the best known result. Theoretically, the hybrid method has lower iteration-complexity requirement than its counterpart that only uses iALM to solve PP subproblems, and numerically, it can perform significantly better than a pure-penalty-based method. Numerical experiments are conducted on %convex quadratically constrained quadratic programs and 
nonconvex linearly constrained quadratic programs. The numerical results demonstrate the efficiency of the proposed methods over existing ones.

\vspace{0.3cm}

\noindent {\bf Keywords:} augmented Lagrangian method (ALM), nonlinear functional constrained problems, first-order method, iteration complexity, weak convexity 

\vspace{0.3cm}

\noindent {\bf Mathematics Subject Classification:} 49M05, 49M37, 90C06, 90C30, 90C60.

\end{abstract}

%%%%%%%%%%%%%%%%%%%%%%%%%%%%%%%%%%%%%%%%%%%%%%%%%%%%%%%%%%%%%%%%%%%%%%

\section{Introduction}
First-order methods (FOMs) have been extensively used for solving large-scale problems, partly due to their low  per-iteration computational cost that usually scales linearly in problem dimension. Roughly speaking, FOMs only use the gradient and/or function value information of a problem and possibly other simple operations such as the projection onto a box set. A majority of existing works on FOMs study unconstrained problems, or those with easy-to-project and/or linear constraint set. Several recent works have focused on problems with nonlinear functional constraints, under both convex and nonconvex settings.

In this paper, we consider the nonlinear program:
\begin{equation}\label{eq:ccp}
f_0^*:=\Min_{\vx\in\RR^n} \big\{f_0(\vx), \st \vA\vx=\vb,\ \vf(\vx)\le \vzero\big\},
\end{equation}
where %$\cX\subseteq\RR^n$ is a closed convex set, 
$\vA\in\RR^{l\times n}$ and $\vb$ are respectively given matrix and vector, and $\vf=(f_1,\ldots,f_m):\RR^n\to\RR^m$ is a vector function. We assume convexity and differentiability of $f_i$ for each $i=1,\ldots,m$, but $f_0$ could be nonconvex and nondifferentiable. The formula in \eqref{eq:ccp} is rather general. Any convex programs with finite constraints can be written into \eqref{eq:ccp}. Examples include linearly constrained (convex or nonconvex) quadratic programming (LCQP), convex quadratically constrained quadratic programming (QCQP), and the Neyman-Pearson classification problem \cite{scott2005neyman, rigollet2011neyman}.

\subsection{Augmented Lagrangian and penalty methods}
On solving a nonlinear functional constrained problem, the augmented Lagrangian method (ALM) \cite{hestenes1969multiplier, powell1969method} and penalty methods are perhaps the most classic and popular methods. The classic AL function of \eqref{eq:ccp} is:
\begin{equation}\label{eq:aug-fun}
\cL_\beta(\vx,\vy,\vz) = \textstyle f_0(\vx) + \vy^\top(\vA\vx-\vb) + \frac{\beta}{2}\|\vA\vx-\vb\|^2 + \frac{1}{2\beta}\left( \left\|[\vz+\beta\vf(\vx)]_+\right\|^2- \|\vz\|^2\right),
\end{equation}
where $\beta>0$ is the penalty parameter, $\vy$ and $\vz$ are the multiplier vectors, and $[\va]_+$ denotes a vector taking component-wise positive part of $\va$. If $f_i$ is convex for each $i=0,1,\ldots,m$, then $\cL_\beta$ is convex about $\vx$ and concave about $(\vy,\vz)$; see \cite[Lemma 1]{xu2019iter-ialm} for example. %and
The ALM, at each iteration, updates the primal variable $\vx$ by minimizing $\cL_\beta$ with $(\vy,\vz)$ fixed and then performs a dual gradient ascent step to the multipliers. If the multipliers are always fixed to a zero vector, the ALM will reduce to a pure-penalty method.%The pseudocode of its inexact version is shown in Algorithm \ref{alg:ialm}, which is actually a framework because it is not specific on how to find $\vx^{k+1}$.

The updates to $\vy$ and $\vz$ only require the matrix-vector multiplication and the function value of $\vf$. To update $\vx$, one can, in principle, apply any unconstrained optimization methods. We will focus on large-scale (possibly nonsmooth) problems, whose Hessian matrices do not exist or are too expensive to compute. %such that we cannot afford for computing the Hessian matrix. 
Hence, it is beneficial to apply one FOM to approximately solve primal subproblems, and the resulting algorithm will be an AL-based FOM. 

\subsection{Literature review} % added 10/19
In this subsection, we review related works on FOMs for solving \eqref{eq:ccp}. Some of existing FOMs are based on the ALM framework or the Lagrangian method, while some others are not. %Our review focuses on works applying iALM or equivalent method on both convex and nonconvex problems.

\vspace{0.3cm}

\noindent\textbf{Lagrangian-based FOMs.}~~On affinely constrained problems, i.e., in the form of \eqref{eq:ccp} with $m=0$, \cite{lan2016iteration-alm} analyzes the iteration complexity of an iALM whose primal-subproblems are approximately solved by an optimal FOM. For a smooth convex problem, \cite{lan2016iteration-alm} shows that $O(\vareps^{-\frac{7}{4}})$ gradient evaluations are sufficient to produce an $\vareps$-KKT point (see Definition~\ref{def:eps-kkt} below). The result is improved to $O(\vareps^{-1}|\log\vareps|)$ by applying the AL-based FOM to a perturbed strongly convex problem. While the result in \cite{lan2016iteration-alm} is established with constant penalty parameter $\beta=O(\vareps^{-1})$, the work \cite{liu2019nonergodic} uses $\beta=O(\vareps^{-\frac{1}{2}})$ and establishes the complexity result  $O(\vareps^{-2})$ to produce an $\vareps$-solution % took a step further and analyzed the total iteration complexity of iALM. 
%In , Liu et al. considered the convex problem with only affine equality constraints, and proved $O(\vareps^{-2})$ complexity to get 
$(\bar{\vx},\bar{\vy})$ satisfying $\Vert \vA \bar{\vx}-\vb \Vert \leq \sqrt{\vareps}$ and $\max_\vx \left\{\langle \nabla g(\vx)+\vA^{\top} \bar{\vy}, \bar{\vx}-\vx \rangle + h(\bar{\vx})-h(\vx)\right\} \leq \vareps$, 
where $f_0=g+h$ has been assumed. The results in \cite{lan2016iteration-alm} have been extended to conic convex programming in \cite{aybat2013augmented, lu2018iteration}. The recent work \cite{xu2019iter-ialm} analyzed an AL-based FOM for solving \eqref{eq:ccp}. For convex problems, it shows a complexity result $O(\vareps^{-1})$ to produce an $\vareps$-optimal solution, and for strongly convex problems, the result is improved to $O(\vareps^{-\frac{1}{2}}|\log\vareps|)$. Different from the nonergodic result that we will show, the produced $\vareps$-optimal solution in \cite{xu2019iter-ialm} is the average of all iterates, i.e., the result is in an ergodic sense. Through accelerating a dual gradient ascent (DGA) method, \cite{nedelcu2014computational} can also obtain a complexity result of $O(\vareps^{-\frac{1}{2}}|\log\vareps|)$ for affinely constrained strongly convex problems. However, for convex problems, the complexity result of either the nonaccelerated or accelerated DGA is $O(\vareps^{-2})$ to produce an $\vareps$-optimal solution. Different from iALM, a proximal iALM is studied in \cite{li2019-piALM}, and it added a proximal term to each ALM subproblem. Although similar complexity results can be achieved, we notice that a proximal version of iALM can perform worse than the original iALM. A linearized ALM is analyzed in \cite{xu2017first}. It simply does one single gradient update to the primal variable before renewing the dual variables. For convex problems, an $O(\vareps^{-1})$ complexity result in an ergodic sense is established to obtain an $\vareps$-optimal solution.%; see the numerical results in section~\ref{sec:experiment-cvx}. 

The aforementioned works are all AL-based. There are also FOMs based on the ordinary Lagrangian function. For example, \cite{necoara2013rate} proposed an inexact first-order dual method and its accelerated version by using the ordinary Lagrangian dual function. To produce an $\vareps$-optimal solution, the accelerated method needs $O(\vareps^{-\frac{1}{2}})$ outer iterations and solves each primal subproblem to an accuracy $O(\vareps^{\frac{3}{2}})$. Hence, for smooth strongly convex problems, its total complexity can be $O(\vareps^{-\frac{1}{2}}|\log\vareps|)$, while for convex problems, its complexity is worse than $O(\vareps^{-1})$. Another example is \cite{hamedani2018primal}, which proposed a method for solving general convex-concave saddle point (SP) problems. It is remarked that a conic convex program can be formulated as an equivalent SP problem by the ordinary Lagrangian function, and the proposed method can be directly applied and satisfies all required conditions for convergence. In addition,  \cite{hamedani2018primal} shows that $O(\vareps^{-1})$ gradient and function evaluations are sufficient to produce a solution with $\vareps$-primal-duality gap.   

Besides solving convex problems, AL-based FOMs have also been analyzed for non-convex problems. A recent work \cite{sahin2019inexact}\footnote{An $\tilde{O}(\vareps^{-3})$ complexity is claimed in Corollary 4.2 in~\cite{sahin2019inexact}. However, this complexity is based on an existing result that was not correctly referred to. The authors claimed that the complexity of solving each nonconvex composite subproblem is $O(\frac{\lambda_{\beta_k}^2 \rho^2}{\vareps_{k+1}})$, which should be $O(\frac{\lambda_{\beta_k}^2 \rho^2}{\vareps_{k+1}^2})$; see~\cite{sahin2019inexact} for the definitions of $\lambda_{\beta_k}, \rho, \vareps_{k+1}$. Using the correctly referred result and following the same proof in \cite{sahin2019inexact}, we get a total complexity of $\tilde{O}(\vareps^{-4})$.} shows a complexity result of $O(\vareps^{-4})$ to produce an $\vareps$-KKT point for problems with nonconvex objective and nonconvex equality constraints. A key assumption that \cite{sahin2019inexact} uses is a regularity condition, which can ensure to control the primal residual by increasing the penalty parameter.

\vspace{0.3cm}

\noindent\textbf{Penalty-based FOMs.} Penalty methods are also commonly used for solving functional constrained problems. If the multiplier vectors are kept \emph{zero}, then the ALM can be viewed as a penalty method. For affinely constrained conic convex programs, \cite{lan2013iteration-pen} gives a first-order quadratic penalty method and establishes a complexity result of $O(\vareps^{-1}|\log\vareps|)$ to produce an $\vareps$-KKT point. For affinely constrained nonconvex problems, the recent work \cite{kong2019complexity-pen} proposes a first-order quadratic penalty method. By geometrically increasing the penalty parameter, it shows a complexity result of $O(\vareps^{-3})$ to obtain an $\vareps$-KKT point. This result has been improved in \cite{lin2019inexact-pp} to $O(\vareps^{-\frac{5}{2}}|\log\vareps|)$ for problems with weakly-convex objective and convex constraints. The order is the best known and matches with the result established in this paper. However, the method in \cite{lin2019inexact-pp} is pure-penalty-based and numerically performs worse than an AL-based method (see the experiments in section~\ref{sec:experiment}). For problems with weakly-convex objective and weakly-convex constraints, \cite{lin2019inexact-pp} gives an $O(\vareps^{-4})$ complexity result, which can be improved to $O(\vareps^{-3})$ if a strong Slater's condition or a regularity condition holds, as shown in \cite{ma2019proximally, boob2019proximal}.

\vspace{0.3cm}

\subsection{Contributions and new results}
\begin{itemize}
\item We propose a hybrid method of multipliers to solve nonconvex problems, for which we assume that the objective is weakly-convex but the constraint functions are still convex. The method is in the framework of the proximal point (PP) method. Utilizing the weak-convexity, we add to the objective a quadratic term with the current iterate as the prox-center, and this leads to a strongly-convex PP subproblem. On solving the PP subproblems, our method mixes the first-order iALM and a first-order penalty method with estimated multipliers,  neither of which is new but similar to existing iALM or penalty methods. \color{black} We break the whole algorithm into multiple stages. Since iALM is generally more efficient than a penalty method, we use it in an initial stage and also at the end of each following stage to estimate the multipliers. With the estimated multipliers, the penalty method can also perform well, and within each stage, the estimated multipliers are fixed so that we can control the change of primal iterates. 

\item Assuming the Slater's condition, we establish an $O(\vareps^{-\frac{5}{2}}|\log\vareps|)$ complexity result to produce an $\vareps$-KKT point. Our result improves nearly by an order of $\vareps^{-\frac{1}{2}}$ over that in \cite{kong2019complexity-pen}, which considers affinely constrained nonconvex problems. It matches with the result established in \cite{lin2019inexact-pp} for a pure-penalty based first-order method. Numerical experiments on nonconvex quadratic programs demonstrate the advantage of our method over the penalty based method. 

\item Over the course of our analysis, we establish an $O(\vareps^{-\frac{1}{2}}|\log\vareps|)$ complexity result of the first-order iALM to produce an $\vareps$-KKT point of a strongly convex problem, which has a composite structured objective and smooth constraints. The complexity result differs from an existing lower bound by $|\log\vareps|$ and is nearly optimal. In addition, the $\vareps$-KKT point can be obtained at an actual primal iterate. This improves the ergodic result in \cite{xu2019iter-ialm}.
\end{itemize}

\subsection{Notation, definitions, and assumptions}
We denote $[n]$ as the set $\{1,\ldots,n\}$. For any two vectors $\va$ and $\vb$ in $\RR^n$, $\va\ge \vb$ means $a_i\ge b_i$ for any $i\in[n]$, $\max\{\va,\vb\}$ denotes the vector by component-wise maximization. $[\va]_+$ is short for $\max\{\va,\vzero\}$. Given $M>0$, $\cB_M$ represents a Euclidean ball with center at origin and radius $M\ge 0$. For a closed convex set $X$, $\cN_X(\vx)$ denotes the normal cone of $X$ at $\vx$.
We denote the distance function between a vector $\vx$ and a set $\cX$ as $\dist(\vx,\cX) = \min_{\vy \in \cX} \Vert \vx-\vy \Vert$. We let $\cL_0$ be the ordinary Lagrangian function of \eqref{eq:ccp}, namely,
\begin{equation}\label{eq:OL}
\cL_0(\vx,\vy,\vz) = f_0(\vx) + \vy^\top (\vA \vx-\vb) + \vz^\top \vf(\vx). %, \text{ for }\vx\in\dom(h), \text{ and }(\vy,\vz)\text{ with }\vz\ge\vzero.
\end{equation}
We use $\vp=(\vy,\vz)$ for the dual variable and let 
\begin{equation}\label{eq:def-cP}\cP=\{(\vy,\vz)\in\RR^l\times \RR^m: \vz\ge\vzero\}
\end{equation}
be the dual feasible region. For any $\beta>0$, the augmented dual function is defined as
\begin{equation}\label{eq:aug-dual-fun}
d_\beta(\vp) = \min_{\vx}\cL_\beta(\vx,\vp), \text{ for }\vp\in\cP, %\left\{
%\begin{array}{ll}
%\min_{\vx}\cL_\beta(\vx,\vy,\vz), &\text{ for }(\vy,\vz)\in\cP,\\[0.1cm]
%-\infty, &\text{ otherwise},
%\end{array}
%\right.
\end{equation}
and the ordinary Lagrangian dual function is $d_0(\vp) = \min_{\vx} \cL_0(\vx,\vp),  \text{ for }\vp\in\cP$. By default, if $\vp\not\in \cP$, then $d_\beta(\vp) = d_0(\vp) = -\infty$.
The augmented dual problem is
%\begin{equation}\label{eq:aug-dual}
$\max_{\vy, \vz}  d_\beta(\vy,\vz).$
It can be easily shown (c.f. \cite{rockafellar1973dual}) that for any $(\vx,\vy,\vz)$,
\begin{equation}\label{P1-eqn-2.5}
\textstyle \cL_\beta(\vx,\vy,\vz) = \min_{\vt\ge 0}\left\{ f_0(\vx) + \langle \vy, \vA\vx-\vb \rangle + \frac{\beta}{2} \left\Vert \vA\vx-\vb \right\Vert^2 +\frac{\beta}{2} \|\vf(\vx) + \vt\|^2+\vz^\top  \big(\vf(\vx)+\vt\big)\right\}.
\end{equation}

\begin{definition}[$\vareps$-KKT point]\label{def:eps-kkt}
	Given $\vareps \geq 0$, a point $\vx \in \dom(f_0)$ is called an $\vareps$-KKT point to \eqref{eq:ccp} if there is a dual point $(\vy,\vz) \in \cP$ such that
	%\begin{subequations}
	\begin{align}\label{eq:kkt}
	\textstyle \sqrt{\left\Vert \vA\vx-\vb \right\Vert^2 + \left\Vert [\vf(\vx)]_+\right\Vert^2 } \leq \vareps,\quad \dist\left(\vzero, \textstyle \partial f_0(\vx)+\vA^\top \vy+\sum_{i=1}^{m}z_i\nabla f_i(\vx) \right) \leq \vareps, \quad \textstyle \sum_{i=1}^{m} | z_i f_i(\vx) | \leq \vareps. % corrected from 4/15 notes..
	\end{align}
	%\end{subequations}
	%If $\vareps_1 = \vareps_2 = \vareps$, $\vx$ is simply called an $\vareps$-KKT point.
\end{definition}

Throughout the paper, we make the following assumptions. 
\begin{assumption}[composite structure and smooth constraints]\label{assump:composite}
	The objective admits a composite structure, i.e., $f_0=g+h$, where $g$ is differentiable and has a Lipschitz-continuous gradient, and $h$ is a simple closed convex function with a bounded domain, i.e.,
	\begin{equation}\label{eq:def-D}
	D =: \max_{\vx,\vx'\in \dom(h)}\|\vx-\vx'\| <\infty.
	\end{equation} 
	Also, $f_i$ is a convex and Lipschitz differentiable function for every $i=1,\ldots,m$, i.e., there are constants $L_0, L_1, \dots, L_m$ such that
	\begin{subequations}\label{lip}
		\begin{align}
		&\left\Vert \nabla g(\widehat{\vx})-\nabla g(\bar{\vx})\right\Vert \leq L_0 \left\Vert \widehat{\vx}-\bar{\vx}\right\Vert, \forall \, \widehat{\vx},\bar{\vx} \in \dom(h),\\ 
		&\left\Vert \nabla f_i(\widehat{\vx})-\nabla f_i(\bar{\vx})\right\Vert \leq L_i \left\Vert \widehat{\vx}-\bar{\vx}\right\Vert, \forall \,  \widehat{\vx},\bar{\vx} \in \dom(h), \forall i \in [m]. \label{lip-i}
		\end{align}
	\end{subequations}
\end{assumption}

By \eqref{eq:def-D} and \eqref{lip}, there must exist constants $B_1,\dots,B_m$ such that
\begin{subequations}\label{P3-eqn-33}
	\begin{align}
	\max \big\{|f_i(\vx)|,\left\Vert \nabla f_i(\vx) \right\Vert\big\} \leq B_i, \forall\, \vx \in \dom(h), \forall\, i \in [m],\\
	|f_i({\vx})-f_i({\vx'})| \leq B_i \left\Vert {\vx}-{\vx'} \right\Vert, \forall\, {\vx},{\vx'} \in \dom(h), \forall\, i \in [m].
	\end{align}
\end{subequations}

{
We assume the boundedness on $\dom(h)$ in order to make sure the existence of the constants $\{L_i\}$ and $\{B_i\}$ when $g$ and $\{f_i\}$ are continuously differentiable. This assumption can be relaxed if the problem is convex, as shown in \cite{li2019-piALM}. However, we cannot relax it for nonconvex problems, as we will also need it to uniformly bound $f_0$ in the proof of Theorem~\ref{thm:bd-num-stage}. This weakness limits the applicability of our proposed method. Nevertheless, if a bound on the solution of an underlying problem can be estimated, one can impose a bounded constraint and then safely apply our method.
}

\begin{assumption}[Slater's condition]\label{assump:slater}
	There exists a point $\vx_{\mathrm{feas}}\in\mathrm{int}(\dom(h))$ such that $\vA\vx=\vb$ and $f_i(\vx)< 0$ for all $i\in [m]$.
\end{assumption}

\begin{assumption}[weak convexity]\label{assump:wkcvx}
	The smooth function $g$ is $\rho$-strongly convex with $\rho>0$, i.e., $g+\frac{\rho}{2}\|\cdot\|^2$ is convex. 
\end{assumption}

\begin{assumption}\label{assump:uniform-bound}
	The nonsmooth part $h$ of the objective has subdifferential satisfying $\partial h(\vx) \subseteq \cN_{\dom(h)}(\vx)+\cB_M$ for any $\vx\in\dom(h)$. In addition, $\vA$ is row full-rank. Furthermore, there is a constant $B_0$ such that $|f_0(\vx)| \le B_0$ for any $\vx\in\dom(h)$.
\end{assumption}

 The assumption $\partial h(\vx) \subseteq \cN_{\dom(h)}(\vx)+\cB_M$ is not too restrictive. We include the term $\cB_M$ to cover a broad class of functions. If $h$ is an indicator function of a compact convex set $\cX$, we have $\partial h(\vx)= \cN_{\cX}(\vx)$ and thus $M=0$; if part of $h$ is a sparsity regularizer $\lambda\|\vx\|_1$, we can choose $M=\lambda\sqrt n$. \color{black} Note that many constants are assumed in Assumptions~\ref{assump:composite}--\ref{assump:uniform-bound}, but their values are not required, except an upper bound on the weak-convexity constant $\rho$, in order to implement our new method proposed in this paper. In addition, the interior feasible point $\vx_{\mathrm{feas}}$ does not need to be known either.

\subsection{Outline}
The rest of the paper is organized as follows. In section~\ref{sec:apg}, we give an adaptive accelerated proximal gradient method for solving strongly convex composite problems. Two AL-based FOMs are presented and analyzed in section~\ref{sec:subroutine} for strongly convex constrained problems. Our new method for \eqref{eq:ccp} and its complexity analysis are given in section~\ref{sec:hybrid}. 
Numerical results are given in section~\ref{sec:experiment}, and finally we conclude the paper in section~\ref{sec:conclusion}. %All proofs are given in the appendix.

\section{Adaptive accelerated proximal gradient method}\label{sec:apg}
The core subproblems to solve in our proposed algorithm are strongly-convex composite problems in the form of %Consider the convex composite problem
\begin{equation}\label{eq:comp-prob}
\Min_{\vx\in\RR^n}~ F(\vx):=G(\vx)+H(\vx),
\end{equation}
where $G$ is $\mu$-strongly convex and $L_G$-smooth, and $H$ is a closed convex function. %Notice that each $\vx$-subproblem in the iALM framework is in the above form. 
With $\mu>0$, one can apply the adaptive accelerated proximal gradient method (AdapAPG) (c.f., \cite{lin2014adaptive})\color{black} \ to approximately solve \eqref{eq:comp-prob}. The pseudocode is given in Algorithm~\ref{alg2}, which uses line search shown in Algorithm~\ref{alg:ALS}. Notice that the algorithm assumes a lower estimate on the strong convexity constant $\mu$ but does not need to know the gradient Lipschitz constant $L_G$. It searches the Lipschitz constant by back-tracking.  We choose Algorithm~\ref{alg2} as the subroutine %to solve strongly convex subproblems 
because it not only has the optimal convergence rate but also converges fast in practice. In particular, Algorithm~\ref{alg2} can perform well even if the strong convexity constant is overestimated or if the problem is not strongly convex. As a result, when Algorithm~\ref{alg2} is used as the inner subroutine for non-convex problems (discussed later in Section \ref{sec:hybrid}), we do not need to accurately estimate the weak-convexity constant and can still %aggressively, which 
make the whole algorithm converge fast in practice. \color{black}

\begin{algorithm}[h]\label{alg2}  % Q.Lin scAPG (w/ AccelLineSearch)
	\caption{$\mathrm{AdapAPG}(G,H,\vx^{-2},\mu, \vareps, L_{\min},\gamma_1, \gamma_2)$}
{\small	
	\DontPrintSemicolon
	\textbf{Input:} $L_{\min} \ge \mu > 0$, increase rate $\gamma_1 > 1$, decrease rate $1 \le \gamma_2 \le 2 \gamma_1$, and error tolerance $\vareps > 0$.\;
	\textbf{Initialization:} $\alpha_{-1} = 1, L_0 = L_{\min}$.\;
	\Repeat{$G(\vx^{-1}) \le G(\vx^{-2}) + \langle \nabla G(\vx^{-2}),\vx^{-1}-\vx^{-2} \rangle + \frac{L_0}{2} \Vert \vx^{-1} - \vx^{-2} \Vert^2$}{
		$L_0 = \gamma_1 L_0$.\\
		$\vx^{-1} = \argmin_\vx \langle \nabla G(\vx^{-2}),\vx \rangle + \frac{L_0}{2}\Vert \vx-\vx^{-2} \Vert^2+H(\vx)$.
	} % preprocessing.
	$\vx^{0} = \vx^{-1}$.\\
	\For{$k=0,1,\ldots$}{
		$(\vx^{k+1},M_k,\alpha_k) = \mathrm{AccelLineSearch}(\vx^k,\vx^{k-1},L_k,\mu,\alpha_{k-1},\gamma_1)$.\\
		$L_{k+1} = \max\{L_{\min},\frac{M_k}{\gamma_2}\}$.\\
		\If{$\dist(\vzero, \partial F(\vx^{k+1})) \le \vareps$}{
			return $\vx^{k+1}$ and stop.
		}
	}
}	
\end{algorithm}
\begin{algorithm}[h]\label{alg:ALS}  % Q.Lin AccelLineSearch	
	\caption{$(\vx^{k+1},M_k,\alpha_k) = \mathrm{AccelLineSearch}(\vx^k,\vx^{k-1},L_k,\mu,\alpha_{k-1},\gamma_1)$}
	{\small	
		\DontPrintSemicolon
		\textbf{Initialization:} increase rate $\gamma_1 > 1$, and $L = \frac{L_k}{\gamma_1}$.\;
		\Repeat{$G(\vx^{k+1}) \le G(\vy^k)+\langle \nabla G(\vy^k),\vx^{k+1}-\vy^k \rangle+\frac{L}{2}\Vert \vx^{k+1}-\vy^k \Vert^2$.}{
			$L \leftarrow L \gamma_1$.\\
			$\alpha_k \leftarrow \sqrt{\frac{\mu}{L}}$.\\
			$\vy^k \leftarrow \vx^k + \frac{\alpha_k(1-\alpha_{k-1})}{\alpha_{k-1}(1+\alpha_k)}(\vx^k-\vx^{k-1})$.\\
			$\vx^{k+1} \leftarrow \argmin_{\vx} \langle \nabla G(\vy^k),\vx \rangle+\frac{L}{2}\Vert \vx-\vy^k \Vert^2+H(\vx)$.
		}
	$M_k = L$.
	}	
\end{algorithm}
\color{black}

%In the rest of this subsection, we show a few important results about AdapAPG. It will be used to establish the complexity results of the first-order iALM to obtain a near-KKT solution.

Since $F$ is strongly convex, denote $\vx^*$ as the unique minimizer of $F$, and let $F^* = F(\vx^*)$. The next result expands Theorem 1 in~\cite{lin2014adaptive}. It gives the total complexity of Algorithm~\ref{alg2} to produce an $\vareps$-stationary point of \eqref{eq:comp-prob}. %It bounds the violation of each $\hat{\vx}^t$ to stationarity.

\begin{theorem}\label{thm:total-iter-apg}
	Let $E = \left\lceil \log_{\gamma_1} \frac{L_G}{\mu} \right\rceil $. Given $\vareps>0$, within at most $3E+6T-3$ evaluations of the objective value of $G$ and the gradient $\nabla G$, Algorithm~\ref{alg2} will output a solution $\vx^T$ that satisfies $\dist\big(\vzero,\partial F(\vx^T)\big) \le \vareps$, 
	where 
	\begin{equation}\label{eq:total-iter-apg}
	T= \left\lceil \sqrt{\frac{L_G \gamma_1}{\mu}}\log \left( \frac{2(1+\gamma_1)L_G \sqrt{ \frac{\gamma_1 L_G}{\mu} \Vert \vx^{-2}-\vx^* \Vert^2 + \Vert \vx^0-\vx^* \Vert^2 }}{\vareps} \right) \right\rceil .
	\end{equation}
\end{theorem}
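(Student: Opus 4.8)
The plan is to expand Theorem~1 of \cite{lin2014adaptive}, which already supplies the linear convergence of the estimate-sequence (Lyapunov) function driving Algorithm~\ref{alg2} together with the line-search bookkeeping that yields the $3E+6T-3$ evaluation count; my task is to add a bound on the Lyapunov value produced by the warm-start loop and, crucially, to translate function-value/distance convergence into the stationarity measure $\dist(\vzero,\partial F(\vx^T))$. First I would dispose of the counting. The opening \textbf{Repeat} loop starts from $L_0=L_{\min}\ge\mu$ and multiplies $L_0$ by $\gamma_1$ until the descent condition holds, which is guaranteed once $L_0\ge L_G$; hence it performs at most $E=\lceil\log_{\gamma_1}(L_G/\mu)\rceil$ back-tracks, and the accepted value satisfies $L_0\le\gamma_1 L_G$. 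For the main loop, each call to \textrm{AccelLineSearch} terminates as soon as $L\ge L_G$, so $M_k\le\gamma_1 L_G$, and because $L_{k+1}=\max\{L_{\min},M_k/\gamma_2\}$ feeds the next search a warm start, the total number of back-tracking steps over the $T$ outer iterations telescopes to $O(T+E)$; charging a bounded number of $G$- and $\nabla G$-evaluations to each back-track and one further $\nabla G$ per iteration for the stopping test reproduces the stated $3E+6T-3$ count exactly as in \cite{lin2014adaptive}.

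The central new step is a gradient-mapping identity. Writing $L=M_k$ for the accepted step, the subproblem $\vx^{k+1}=\argmin_{\vx}\langle\nabla G(\vy^k),\vx\rangle+\tfrac{L}{2}\|\vx-\vy^k\|^2+H(\vx)$ has optimality condition $\vzero\in\nabla G(\vy^k)+M_k(\vx^{k+1}-\vy^k)+\partial H(\vx^{k+1})$, so that
\[
M_k(\vy^k-\vx^{k+1})+\nabla G(\vx^{k+1})-\nabla G(\vy^k)\in\nabla G(\vx^{k+1})+\partial H(\vx^{k+1})=\partial F(\vx^{k+1}).
\]
Using $L_G$-smoothness of $G$ and $M_k\le\gamma_1 L_G$ then gives
\[
\dist\big(\vzero,\partial F(\vx^{k+1})\big)\le (M_k+L_G)\,\|\vx^{k+1}-\vy^k\|\le (1+\gamma_1)L_G\,\|\vx^{k+1}-\vy^k\|,
\]
which already explains the factor $(1+\gamma_1)L_G$ in \eqref{eq:total-iter-apg}. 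It remains to bound $\|\vx^{k+1}-\vy^k\|$, and since $\vy^k=\vx^k+\theta_k(\vx^k-\vx^{k-1})$ with a uniformly bounded extrapolation coefficient $\theta_k$, this reduces to controlling $\|\vx^j-\vx^*\|$ for $j\in\{k-1,k,k+1\}$.

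Those distances come from the imported linear convergence. The warm-start point $\vx^0=\vx^{-1}$ is one proximal-gradient step from $\vx^{-2}$ with stepsize $1/L_0\ge 1/(\gamma_1 L_G)$, so the standard prox-descent inequality yields $F(\vx^0)-F^*\le\tfrac{L_0}{2}\|\vx^{-2}-\vx^*\|^2\le\tfrac{\gamma_1 L_G}{2}\|\vx^{-2}-\vx^*\|^2$, whence the initial Lyapunov value obeys
\[
\big(F(\vx^0)-F^*\big)+\tfrac{\mu}{2}\|\vx^0-\vx^*\|^2\le\tfrac{\mu}{2}\Big(\tfrac{\gamma_1 L_G}{\mu}\|\vx^{-2}-\vx^*\|^2+\|\vx^0-\vx^*\|^2\Big),
\]
which is precisely the quantity under the square root in \eqref{eq:total-iter-apg}. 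Since each step sets $\alpha_k=\sqrt{\mu/M_k}\ge\sqrt{\mu/(\gamma_1 L_G)}$, the estimate-sequence bound decays at rate $1-\sqrt{\mu/(\gamma_1 L_G)}$; combining this with $\tfrac{\mu}{2}\|\vx^k-\vx^*\|^2\le F(\vx^k)-F^*$ bounds each $\|\vx^j-\vx^*\|$ geometrically. Feeding this into the gradient-mapping estimate, requiring the result to fall below $\vareps$, taking logarithms, and using $-\log(1-q)\ge q$ with $q=\sqrt{\mu/(\gamma_1 L_G)}$ then produces the smallest admissible index $T$, namely the ceiling in \eqref{eq:total-iter-apg}.

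The hard part will be the last step, matching the constants inside the displayed $T$ exactly. The two delicate points are (i) propagating the geometric decay through the extrapolated point $\vy^k$ without inflating the constant inside the logarithm, which requires tracking $\theta_k$ and the three consecutive iterates jointly rather than bounding them separately, and (ii) reconciling the prefactor $\sqrt{\gamma_1 L_G/\mu}$ with the half-rate that a naive passage through $\tfrac{\mu}{2}\|\vx^k-\vx^*\|^2\le F(\vx^k)-F^*$ would suggest; this forces one to invoke the sharper distance estimate of \cite{lin2014adaptive} rather than the plain function-gap bound, and to account for the extra factor so that the argument of the logarithm collapses to $\tfrac{2(1+\gamma_1)L_G\sqrt{\cdots}}{\vareps}$. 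Once these constants are aligned, the stopping test $\dist(\vzero,\partial F(\vx^{k+1}))\le\vareps$ is certified to trigger no later than the stated $T$, and the evaluation bound follows from the counting above.
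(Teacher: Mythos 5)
Your proposal is correct and follows essentially the same route as the paper's proof: the same gradient-mapping identity $M_k(\vy^k-\vx^{k+1})+\nabla G(\vx^{k+1})-\nabla G(\vy^k)\in\partial F(\vx^{k+1})$ giving the $(1+\gamma_1)L_G\|\vx^{k+1}-\vy^k\|$ bound, the same warm-start estimate $F(\vx^0)-F^*\le\tfrac{\gamma_1 L_G}{2}\|\vx^{-2}-\vx^*\|^2$, the same appeal to Theorem~1 of \cite{lin2014adaptive} for the geometric decay of both $F(\vx^k)-F^*$ and $\|\vy^k-\vx^*\|^2$ (combined via the triangle inequality $\|\vx^{k+1}-\vy^k\|\le\|\vx^{k+1}-\vx^*\|+\|\vy^k-\vx^*\|$), and the same evaluation count $2+1+3(E+2(T-1))=3E+6T-3$. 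The "delicate points" you flag at the end are resolved exactly as you anticipate, by using the imported bound on $\|\vy^k-\vx^*\|$ directly rather than propagating through the extrapolation coefficient.
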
 % proofs not yet written. See 11/19/2020 Scratch (1)(2).

\begin{proof}
By the $L_G$ smoothness of $F$, the preprocessing step (line 3-6) of Algorithm \ref{alg2} gives us $F(\vx^0)-F^* \le \frac{\gamma_1 L_G}{2}\Vert \vx^{-1}-\vx^* \Vert^2$, which combined with \eqref{eq:total-iter-apg} yields
\begin{equation}\label{eq:thm1_as}
 \left(1-\frac{\mu}{L \gamma_1}\right)^\frac{T}{2} \le \frac{\vareps}{2L(1+\gamma_1)}\sqrt{ \frac{\mu}{2 \left( F(\vx^0)-F^*+\frac{\mu}{2} \Vert \vx^0-\vx^* \Vert^2 \right) } }. 
\end{equation}
	By Theorem 1 in~\cite{lin2014adaptive}, we have $\forall k \ge 0$,
\begin{align}
F(\vx^k)-F^* &\le \left( 1-\sqrt{\frac{\mu}{L \gamma_1}} \right)^k \left(F(\vx^0)-F^*+\frac{\mu}{2} \Vert \vx^0-\vx^* \Vert^2 \right),\\
\Vert \vy^k-\vx^* \Vert^2 &\le \left( 1-\sqrt{\frac{\mu}{L \gamma_1}} \right)^k \frac{2}{\mu} \left( F(\vx^0)-F^*+\frac{\mu}{2} \Vert \vx^0-\vx^* \Vert^2 \right).
\end{align} 
Since $F$ is $\mu$ strongly convex, we have
\begin{align*}
F(\vx^k) &\ge F^* + \frac{\mu}{2}\Vert \vx^k-\vx^* \Vert^2 \\
\Vert \vx^k-\vx^* \Vert^2 &\le \frac{2}{\mu} (F(\vx^k)-F^*) \\
&\le \left( 1-\sqrt{\frac{\mu}{L \gamma_1}} \right)^k \frac{2}{\mu} \left( F(\vx^0)-F^*+\frac{\mu}{2} \Vert \vx^0-\vx^* \Vert^2 \right),
\end{align*} 
and thus 
\begin{align}\label{eq:x-y}
\Vert \vx^{k+1}-\vy^k \Vert &\le \Vert \vx^{k+1}-\vx^* \Vert + \Vert \vy^k - \vx^* \Vert \nonumber \\
&\le 2 \left( 1-\sqrt{\frac{\mu}{L \gamma_1}} \right)^{\frac{k+1}{2}} \sqrt{\frac{2}{\mu} \left( F(\vx^0)-F^*+\frac{\mu}{2} \Vert \vx^0-\vx^* \Vert^2 \right)},
\end{align} 
which combined with \eqref{eq:thm1_as} yields 
\begin{equation}\label{eq:xT-yT}
\Vert \vx^T - \vy^{T-1} \Vert \le \frac{\vareps}{L(1+\gamma_1)}.
\end{equation}
Now by the $\vx$ update and stopping condition in Algorithm \ref{alg:ALS}, we have $\forall k \ge 0$, $\vzero \in \nabla G(\vy^k) + L_k(\vx^{k+1}-\vy^k) + \partial H(\vx^{k+1})$, so $L_k(\vy^k-\vx^{k+1}) + \nabla G(\vx^{k+1}) - \nabla G(\vy^k) \in \partial F(\vx^{k+1})$. Hence,
\begin{align*}
\dist(\vzero, \partial F(\vx^{k+1})) &\le \Vert L_k(\vy^k-\vx^{k+1}) + \nabla G(\vx^{k+1}) - \nabla G(\vy^k) \Vert\\
&\le (L_k + L_G) \Vert \vx^{k+1}-\vy^k \Vert\\
&\le (1+\gamma_1)L \Vert \vx^{k+1}-\vy^k \Vert,
\end{align*}
which combined with \eqref{eq:xT-yT} yields
\begin{equation*}
\dist(\vzero, \partial F(\vx^T)) \le (1+\gamma_1)L \Vert \vx^T-\vy^{T-1} \Vert \le \vareps.
\end{equation*}

It remains to show at most $3E+6T-3$ evaluations of $G$ and $\nabla G$ are needed to compute $\vx^T$.
First note that since $L_{\min} \ge \mu$ and $E = \left\lceil \log_{\gamma_1} \frac{L_G}{\mu} \right\rceil$, at most $E$ iterations are needed to increase $L_{\min}$ above $L_G$.
Recall that in the $k$-th iteration of Algorithm \ref{alg2}, the local smoothness estimate starts from $L_k$, and it decreases by a factor of $\gamma_2 \le 2 \gamma_1$ after every call of Algorithm \ref{alg:ALS} and increases by a factor of $\gamma_1$ every time when the stopping condition of Algorithm \ref{alg:ALS} fails. Note that every preprocessing iteration (line 3 to 6) requires $1$ evaluation of $G(\vx^{-1})$ in addition to $2$ pre-computations of $G(\vx^{-2})$ and $\nabla G(\vx^{-2})$, while every iteration of Algorithm \ref{alg:ALS}  needs $3$ evaluations of $G$ and $\nabla G$. Therefore, one preprocessing iteration with $E$ iterations of Algorithm \ref{alg:ALS} at $k = 0$ and two iterations of Algorithm \ref{alg:ALS} (because $\frac{\gamma_2}{\gamma_1} \le 2$) for all $k \ge 1$ are sufficient to generate $\vx^T$. This gives the upper bound on the evaluations of $G$ and $\nabla G$ as $2 + 1 + 3(E+2(T-1)) = 3E+6T-3$.
\end{proof}

\color{black}

\begin{remark}\label{rm:dist-x-pF}
	Since $F$ is $\mu$-strongly convex, we have $\| \tilde\nabla F(\vx)\|\ge\mu\|\vx-\vx^*\|$ for any $\vx$, where $\tilde\nabla F(\vx)$ is the any subgradient of $F$ at $\vx$, and $\vx^*$ is the minimizer of $F$. Hence, if $\dist(\vzero, \partial F(\vx)) \le \vareps$, then
	\begin{equation}\label{eq:gap-Fxt}
	F(\vx)-F(\vx^*) \le \big\langle \tilde\nabla F(\vx), \vx- \vx^* \big\rangle \le \frac{1}{\mu}\|\tilde\nabla F(\vx)\|^2 \le \frac{\vareps^2}{\mu},
	\end{equation}
namely, a near stationary point is also a near optimal solution when $F$ is strongly convex.	
\end{remark}

%Therefore, we are able to provide an upper bound on the total number of iterations of Algorithm~\ref{alg2} to terminate and output the desired approximate solution.

\section{Two methods of multipliers for strongly convex problems}\label{sec:subroutine}
%In this section, we propose a new algorithm for solving nonconvex problems in the form of \eqref{eq:ccp}. 
Our proposed algorithm for \eqref{eq:ccp} will be designed under the framework of the proximal point method. It approximately solves a sequence of subproblems in the form of
\begin{equation}\label{barsub}
\Min_{\vx\in\RR^n} \big\{f_0(\vx)+\rho \Vert \vx-\bar\vx \Vert^2, \st \vA\vx=\vb, f_i(\vx)\le 0, i=1,\ldots, m\big\},
\end{equation}
where $\bar\vx$ is the primal-center of the subproblem. In this section, we give two methods as subroutines for solving \eqref{barsub}. One is an inexact ALM (named iALM in Algorithm~\ref{alg:ialm-stcvx}) and the other an inexact penalty method with estimated multipliers (named PenMM in Algorithm~\ref{alg:penalty}). Both iALM and PenMM are based on the augmented Lagrangian function. Their difference lies in whether the multipliers are updated or not during the iterations. 

We use two different subroutines for good numerical performance and also low complexity results. As we will discuss in section~\ref{sec:comp-ialm} that a complexity result of $O(\vareps^{-3}|\log\vareps|)$ is obtained if only iALM is used. This result is worse than $O(\vareps^{-\frac{5}{2}}|\log\vareps|)$ obtained by our hybrid method. In addition, we will see in section~\ref{sec:experiment} that the hybrid method performs significantly better than a pure-penalty method.

Throughout this section, we denote 
\begin{equation}\label{eq:bar-func}
\bar g(\vx) = g(\vx) + \rho\|\vx-\bar\vx\|^2;\ \bar\cL_0(\vx,\vp) = \cL_0(\vx,\vp) + \rho\|\vx-\bar\vx\|^2; \ \bar\cL_\beta(\vx,\vp) = \cL_\beta(\vx,\vp) + \rho\|\vx-\bar\vx\|^2, 
\end{equation}
where $\cL_0$ and $\cL_\beta$ are given in \eqref{eq:OL} and \eqref{eq:aug-fun} respectively, and $\rho$ is the weak convexity constant in Assumption~\ref{assump:wkcvx}.
All the three functions in \eqref{eq:bar-func} are $\rho$-strongly convex about $\vx$. In addition, under Assumption~\ref{assump:slater}, we have (c.f.: \cite[Thm 28.2]{rockafellar2015convex}) that any optimal solution $\bar\vx^*$ of \eqref{barsub} must be a KKT point, namely, there is $\bar\vp^*=(\bar\vy^*, \bar\vz^*)\in\RR^l\times\RR^m$ such that
%\begin{subequations}
\begin{align}\label{eq:kkt-system}
\vzero \in  \nabla \bar g(\bar\vx^*) + \partial h(\bar\vx^*) + \vA^\top \bar\vy^* + \sum_{i=1}^{m} z_i^*\nabla f_i(\vx^*),\ \bar\vz^* \geq \vzero; \ \vA\bar\vx^* = \vb, \vf(\bar\vx^*) \leq \vzero; \ (\bar\vz^*)^\top \vf(\bar\vx^*) = 0.
\end{align}

%Our method is a hybrid of 

\subsection{An inexact augmented Lagrangian method for \eqref{barsub}}
In the framework of the ALM, the AL function needs to be minimized about the primal variable at every iteration. We apply the AdapAPG given in Algorithm~\ref{alg2} to approximately solve each ALM subproblem of \eqref{barsub}. For our purpose, we obtain every iterate as a near-stationary point of the AL function. The pseudocode is given in Algorithm~\ref{alg:ialm-stcvx}.

\begin{algorithm}[h] % added y0=0, z0=0
	{\small
		\caption{$(\beta_{\mathrm{out}},\vx_{\mathrm{out}},\vp_{\mathrm{out}})=\text{iALM}(\vareps,\beta_0,\sigma,\rho,\bar\vx, L_{\min}, \gamma_1, \gamma_2)$ for solving \eqref{barsub}}\label{alg:ialm-stcvx}
		\DontPrintSemicolon
		\textbf{Input:} tolerance $\vareps>0$, weak-convexity constant $\rho>0$,  $\beta_0>0$, $\sigma>1$, $\gamma_1 > 1$, $\gamma_2 \ge 1$, and $L_{\min} >0$;\;
		\textbf{Initialization:} choose $\vx^0\in\dom(h), \vy^0 = \vzero, \vz^0 = \vzero$;\; %a positive integer $K$, 
		%and $\{\vareps_k\}$ with $\vareps_k < \vareps$ for each $k$;\;
		\For{$k=0,1,\ldots,$}{
			%Define $F(\vx) = \bar\cL_{\beta_k}(\vx, \vy^k,\vz^k)$ where $\bar\cL_\beta$ is given in \eqref{eq:bar-func};\; %g(\vx)+h(\vx)+\langle \vy^k,\vA\vx-\vb\rangle+\frac{\beta_k}2\|\vA\vx-\vb\|^2+\frac{1}{2\beta_k}\left(\|[\vz^k+\beta_k\vf(\vx)]_+\|^2 - \|\vz^k\|^2\right)$;\;
			%Choose $\vareps_k=\frac{\sigma-1}{2(\sigma+1)}\vareps$ and set $\delta_k=\sqrt{\mu\vareps_k}$ if $\mathrm{TYPE} =\text{`OPT'}$;\;
			Choose $\vareps_k = \bar{\vareps} = \sqrt\frac{\sigma-1}{\sigma+1}\frac{\vareps}2\min\{1,\sqrt{\rho}\}$;\;% if $\mathrm{TYPE} =\text{`KKT1', or `KKT2'}$;\;
			Apply Algorithm~\ref{alg2} to find 
			% \begin{equation}\label{eq:alm-x-stcvx}
			$ \vx^{k+1}=\mathrm{AdapAPG}\left(F-h, h, \vx^k, \rho,  \vareps_k, L_{\min}, \gamma_1, \gamma_2\right)$ with $F(\vx)=\bar\cL_{\beta_k}(\vx, \vp^k)$;
			%\end{equation}
			\;  
			Update the dual variable $\vp=(\vy,\vz)$ with $\vy$ and $\vz$ respectively by 
			\begin{align}
			\vy^{k+1} =&~\vy^k  + \beta_k (\vA\vx^{k+1}-\vb),\label{eq:alm-y}\\
			%z^{k+1} _i=&~z_i^k+\beta_k \cdot\max\left(-\frac{z_i^k}{\beta_k}, f_i(\vx^{k+1})\right), i=1,\ldots, m.
			\vz^{k+1}=&~\max\big\{\vzero,\ \vz^k + \beta_k \vf(\vx^{k+1})\big\}.\label{eq:alm-z}
			\end{align}
			Compute
			\begin{equation}\label{eq:check-stop}
			\mathrm{ERROR} = \max\left\{\frac{\|\vp^k\| + \|\vp^{k+1}\|}{\beta_k},\, \sum_{i=1}^m |z_i^{k+1} f_i(\vx^{k+1})| \right\};
			\end{equation}\;
			\textbf{if} $\mathrm{ERROR} \le \vareps$, \textbf{then} return $(\beta_{\mathrm{out}},\vx_{\mathrm{out}},\vp_{\mathrm{out}})=(\beta_k,\vx^{k+1},\vp^{k+1})$ and stop.\;
			Let $\beta_{k+1}=\sigma\beta_k$.
		}
	}
\end{algorithm}

 The stopping condition in Algorithm~\ref{alg:ialm-stcvx} is guaranteed to hold within $K$ iterations, where $K$ is given in Theorem \ref{thm:iter-eps-kkt}. \color{black} Once Algorithm~\ref{alg:ialm-stcvx} stops, the output $(\vx^{k+1},\vp^{k+1})$ must be an $\vareps$-KKT point of \eqref{barsub}.

\begin{theorem}\label{thm:stop-eps-sol}
	Suppose $k=K-1$ for some integer $K\ge 1$ when Algorithm~\ref{alg:ialm-stcvx} stops. Then the output $\vx^K$ is %an $\vareps$-optimal solution of \eqref{eq:ccp} if $\mathrm{TYPE} = \emph{\text{`OPT'}}$ and 
	an $\vareps$-KKT point of \eqref{barsub}. %if $\mathrm{TYPE} = \emph{\text{`KKT1'}}$ or $\emph{\text{`KKT2'}}$.
\end{theorem}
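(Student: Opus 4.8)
The plan is to verify, for the dual pair $\vp^K=(\vy^K,\vz^K)$ returned at termination, the three defining inequalities of an $\vareps$-KKT point (Definition~\ref{def:eps-kkt}) for the \emph{subproblem} \eqref{barsub}, whose objective is $\bar f_0(\vx):=f_0(\vx)+\rho\|\vx-\bar\vx\|^2$ so that $\partial\bar f_0(\vx)=\partial f_0(\vx)+2\rho(\vx-\bar\vx)$. Dual feasibility is immediate: by \eqref{eq:alm-z} we have $\vz^K=[\vz^{K-1}+\beta_{K-1}\vf(\vx^K)]_+\ge\vzero$, so $\vp^K\in\cP$.

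The key observation that drives the whole argument is that the dual updates \eqref{eq:alm-y}--\eqref{eq:alm-z} are designed precisely so that the penalty gradients in $\partial_\vx\bar\cL_{\beta_{K-1}}(\cdot,\vp^{K-1})$ collapse into the Lagrangian multiplier terms evaluated at $\vp^K$. Differentiating \eqref{eq:aug-fun} together with the prox term $\rho\|\cdot-\bar\vx\|^2$ produces a subgradient of the form
\[
\partial f_0(\vx^K)+2\rho(\vx^K-\bar\vx)+\vA^\top\vy^{K-1}+\beta_{K-1}\vA^\top(\vA\vx^K-\vb)+\sum_{i=1}^m\big[z_i^{K-1}+\beta_{K-1}f_i(\vx^K)\big]_+\nabla f_i(\vx^K).
\]
Substituting \eqref{eq:alm-y} turns the affine part into $\vA^\top\vy^K$, and \eqref{eq:alm-z} turns the $i$-th penalty coefficient into $z_i^K$, so this element lies in $\partial\bar f_0(\vx^K)+\vA^\top\vy^K+\sum_i z_i^K\nabla f_i(\vx^K)$. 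Since Algorithm~\ref{alg2} returns $\vx^K$ with $\dist\big(\vzero,\partial\bar\cL_{\beta_{K-1}}(\vx^K,\vp^{K-1})\big)\le\vareps_{K-1}$ (Theorem~\ref{thm:total-iter-apg}, whose strong-convexity hypothesis holds because $\bar g=g+\rho\|\cdot-\bar\vx\|^2$ is $\rho$-strongly convex by Assumption~\ref{assump:wkcvx} and the penalty terms are convex, so $G=\bar\cL_{\beta_{K-1}}-h$ is $\rho$-strongly convex), the stationarity residual is at most $\vareps_{K-1}=\bar\vareps$; and $\bar\vareps\le\vareps/2\le\vareps$ because $\sqrt{(\sigma-1)/(\sigma+1)}<1$ and $\min\{1,\sqrt\rho\}\le 1$.

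For primal feasibility I would read $\vA\vx^K-\vb=\frac{1}{\beta_{K-1}}(\vy^K-\vy^{K-1})$ directly from \eqref{eq:alm-y}, and then handle the inequality part by a short case analysis of \eqref{eq:alm-z}: if $f_i(\vx^K)>0$ the $\max$ is active and $f_i(\vx^K)=\frac{1}{\beta_{K-1}}(z_i^K-z_i^{K-1})$, while otherwise $[f_i(\vx^K)]_+=0$, giving $[f_i(\vx^K)]_+\le\frac{1}{\beta_{K-1}}|z_i^K-z_i^{K-1}|$ componentwise. Recombining the two residuals into a single Euclidean norm yields
\[
\sqrt{\|\vA\vx^K-\vb\|^2+\|[\vf(\vx^K)]_+\|^2}\le\frac{\|\vp^K-\vp^{K-1}\|}{\beta_{K-1}}\le\frac{\|\vp^{K-1}\|+\|\vp^K\|}{\beta_{K-1}},
\]
whose right-hand side is one of the two terms inside $\mathrm{ERROR}$ in \eqref{eq:check-stop}, hence $\le\vareps$ by the stopping rule. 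The complementary-slackness bound $\sum_i|z_i^K f_i(\vx^K)|\le\vareps$ is the other term in $\mathrm{ERROR}$, so it holds directly.

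I expect the only genuinely delicate step to be the inequality-feasibility bound: one must dispose of the projection $[\cdot]_+$ in the $\vz$-update by cases and then recombine the affine and inequality residuals so that exactly the quantity $\frac{\|\vp^{K-1}\|+\|\vp^K\|}{\beta_{K-1}}$ appearing in $\mathrm{ERROR}$ emerges. The stationarity ``telescoping'' is conceptually the heart of the argument, but it is computationally routine once the subgradient of $\bar\cL_{\beta_{K-1}}$ is written out and the dual updates are substituted.
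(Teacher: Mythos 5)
Your proposal is correct and follows essentially the same route as the paper's proof: primal feasibility via the identity $\vA\vx^K-\vb=\frac{1}{\beta_{K-1}}(\vy^K-\vy^{K-1})$ and the componentwise bound $[f_i(\vx^K)]_+\le\frac{1}{\beta_{K-1}}|z_i^K-z_i^{K-1}|$ combined into $\frac{\|\vp^K-\vp^{K-1}\|}{\beta_{K-1}}\le\frac{\|\vp^{K-1}\|+\|\vp^K\|}{\beta_{K-1}}\le\vareps$, stationarity via the identity $\partial_\vx\bar\cL_0(\vx^K,\vp^K)=\partial_\vx\bar\cL_{\beta_{K-1}}(\vx^K,\vp^{K-1})$ induced by the dual updates (which you merely write out more explicitly than the paper does), and complementarity directly from the stopping rule. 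No gaps.
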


\begin{proof}
By the update of $\vy$ in \eqref{eq:alm-y}, it holds that  
\begin{equation}\label{eq:rel-y}
\left\Vert \vA\vx^{k+1}-\vb \right\Vert^2=\frac{1}{\beta_k^2}\|\vy^{k+1}-\vy^k\|^2.
\end{equation} Also, from the update of $\vz$ in \eqref{eq:alm-z}, we have
%By (\ref{z-update-2}), 
$f_i(\vx^{k+1}) \leq \frac{1}{\beta_k}(z_i^{k+1}-z_i^k)$, and thus
$[f_i(\vx^{k+1})]_+\le \frac{1}{\beta_k}|z_i^{k+1}-z_i^k|$ for any $i\in[m]$. Hence, 
\begin{equation}\label{eq:rel-z}
\left\Vert [\vf(\vx^{k+1})]_+ \right\Vert^2\le\frac{1}{\beta_k^2}\|\vz^{k+1}-\vz^k\|^2,
\end{equation}
which together with \eqref{eq:rel-y} gives 
\begin{equation}
		\textstyle	\left\Vert \big[\vf(\vx^{k+1})\big]_+ \right\Vert^2+ \left\Vert \vA\vx^{k+1}-\vb \right\Vert^2 \leq \frac{1}{\beta_k^2}\Vert \vp^{k+1}-\vp^k \Vert^2. \label{P2-eqn-4.13-sum}
		\end{equation}

Now by the update of $\vx^{k+1}$, we have $\dist\big(\vzero, \partial_\vx\bar\cL_{\beta_k}(\vx^{k+1},\vp^k)\big) \le \vareps_k$. Furthermore, by the update of $\vp$, it holds $\partial_\vx\bar\cL_{0}(\vx^{k+1},\vp^{k+1})=\partial_\vx\bar\cL_{\beta_k}(\vx^{k+1},\vp^k)$. Hence, $\dist\big(\vzero, \partial_\vx\bar\cL_{0}(\vx^{k+1},\vp^{k+1})\big)\le \vareps_k < \vareps$ for each $k$. In addition, we have $ \max\left\{\frac{\|\vp^{K-1}-\vp^{K}\|}{\beta_{K-1}},\, \sum_{i=1}^m |z_i^{K} f_i(\vx^{K})| \right\}\le \vareps$, as Algorithm~\ref{alg:ialm-stcvx} stops. Therefore, it holds that $\big\Vert [\vf(\vx^{K})]_+ \big\Vert^2+ \left\Vert \vA\vx^{K}-\vb \right\Vert^2\le \vareps^2$ by \eqref{P2-eqn-4.13-sum}, and $\vx^K$ is an $\vareps$-KKT point of \eqref{eq:ccp} by Definition~\ref{def:eps-kkt}.
\end{proof}

Below, we establish the iteration complexity result of Algorithm~\ref{alg:ialm-stcvx}. First, for a fixed $\vp$, let $F(\cdot) = \bar\cL_\beta(\,\cdot\,,\vp)$ and $G = F-h$. 
Then
\begin{equation*} 
\textstyle \nabla G(\vx)=\nabla \bar g(\vx)+\vA^\top  \vy+\beta \vA^\top  (\vA\vx-\vb)+\sum_{i=1}^{m}[z_i+\beta f_i(\vx)]_+ \nabla f_i(\vx),
\end{equation*} 
and by \cite[Lemma 5]{xu2019iter-ialm}, $\nabla G(\vx)$ is Lipschitz continuous on $\dom(h)$ with constant
\begin{equation}\label{P3-eqn-34}
\textstyle L(\vz,\beta) = L_0+2\rho+\beta \left\Vert \vA^\top  \vA \right\Vert+\sum_{i=1}^{m} (\beta B_i(B_i+L_i)+L_i|z_i|),
\end{equation}
where $\{L_i\}_{i=0}^m$ and $\{B_i\}_{i=1}^m$ are given in \eqref{lip} and \eqref{P3-eqn-33} respectively. 

Second, from the proof of \cite[Lemma 7]{xu2019iter-ialm}, it holds that if $\bar\cL_{\beta_t}(\vx^{t+1},\vp^t)\le \min_\vx\bar\cL_{\beta_t}(\vx,\vp^t)+e_t, \, \forall\, t\le k-1$ for an error sequence $\{e_t\}$, then
\begin{equation}\label{eq:bound-dual-xu}
\textstyle \|\vp^k-\bar\vp^*\|^2 \le \|\vp^0-\bar\vp^*\|^2 + 2\sum_{t=0}^{k-1}\beta_t e_t,\, \forall\, k\ge 0,
\end{equation}
where $\bar\vp^*$ is a dual solution satisfying the KKT conditions in \eqref{eq:kkt-system}. When $\vp^0=\vzero$, \eqref{eq:bound-dual-xu} implies
\begin{align}
&\textstyle \|\vp^k\|^2 \le 4\|\bar\vp^*\|^2+ 4\sum_{t=0}^{k-1}\beta_t e_t,\, \forall\, k\ge 0, \label{p-sq-bound}\\
&\textstyle \left\Vert \vp^k \right\Vert \leq 2 \left\Vert \bar\vp^* \right\Vert + \sqrt{2\sum_{t=0}^{k-1}\beta_t e_t},\, \forall\, k\ge 0. \label{p-bound}
\end{align}

\begin{theorem}[Iteration complexity of iALM]\label{thm:iter-eps-kkt}
	Under Assumptions~\ref{assump:composite} to \ref{assump:wkcvx}, let $\vareps\in(0,\frac{1}{2}]$. Then within $K$ iterations, Algorithm~\ref{alg:ialm-stcvx} will stop, and the output $\vx^K$ is an $\vareps$-KKT point of \eqref{eq:ccp}, where
	\begin{equation}\label{eq:K-max-iter-kkt}
	\textstyle K= \lceil\log_\sigma 
	\widehat C_\vareps\rceil + 1,\ \mathrm{with}\ \widehat C_\vareps=\max\left\{\frac{10\|\bar\vp^*\|^2}{\beta_0\vareps},\, \frac{8\|\bar\vp^*\|}{\beta_0\vareps},\, \frac{4}{\beta_0}\right\},
	\end{equation}
with $\bar\vp^*$ a dual solution satisfying the KKT conditions in \eqref{eq:kkt-system}.	
	In addition, the total number of gradient and function value evaluations	
$$T_{\mathrm{total}}\le 6\sqrt{\gamma_1}\left( \sqrt{\kappa}K+\widetilde{C}_{\vareps} \right)\log \left( \frac{2(1+\gamma_1)\widehat{L}_{\vareps}D\sqrt{\frac{\gamma_1 \hat{L}_\vareps}{\rho}+1}}{\vareps} \right) + \left( 3\widehat{E}+3 \right)K  ,$$
% 2\left(2+\log_{\gamma_1} \frac{\widehat{L}_{\vareps}}{L_{\min}}\right)\left( K + (1+\sqrt{2})\sqrt{\gamma_1}(\sqrt{\kappa}K + \widetilde{C}_{\vareps}) \log\frac{ D\left(\sqrt{\gamma_1 \widehat{L}_{\vareps}}+\frac{\widehat{L}_{\vareps}}{\sqrt{L_{\min}}}\right)\frac{\sqrt{\gamma_1 \widehat{L}_{\vareps}}}{\sqrt{2}} }{ \bar{\vareps} } \right)
\color{black}
where
\begin{equation}\label{eq:def-Lc-kappac}
\begin{aligned}
&\kappa=\frac{L_0+2\rho+2\|\bar\vp^*\|\sqrt{\textstyle\sum_{i=1}^mL_i^2}}{\rho},\, L_c=\|\vA^\top\vA\|+\sum_{i=1}^m B_i(B_i+L_i),\\
& \widehat L_\vareps=L_0+2\rho+\beta_0 \sigma L_c \widehat C_\vareps +\sqrt{\sum_{i=1}^mL_i^2}\left(2\left\Vert \bar\vp^* \right\Vert +\frac{\vareps}{\sqrt 2}\sqrt{\beta_0 \widehat C_\vareps}\right),\\
& \widetilde{C}_{\vareps} = \frac{\sqrt{\beta_0 L_c \sigma^2 \widehat C_\vareps}}{\sqrt{\rho}(\sqrt{\sigma}-1)}+ \left(\sqrt{\sum_{i=1}^{m}L_i^2} \frac{\vareps \sqrt{\beta_0}}{\sqrt{2\sigma}} \right)^{\frac{1}{2}} \frac{\sigma^{\frac{1}{2}}\widehat C_\vareps^{\frac{1}{4}}}{\sqrt{\rho}(\sigma^{\frac{1}{4}}-1)} ,\\
& \widehat{E} = \left\lceil \log_{\gamma_1} \frac{\widehat{L}_\vareps}{\rho} \right\rceil.
\end{aligned}
\end{equation}
\end{theorem}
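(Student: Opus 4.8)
The plan is to separate the claim into the KKT property of the returned point, the outer-iteration bound $K$, and the evaluation count $T_{\mathrm{total}}$. The first is essentially free: by Theorem~\ref{thm:stop-eps-sol}, as soon as the stopping test $\mathrm{ERROR}\le\vareps$ of \eqref{eq:check-stop} fires, the current iterate is an $\vareps$-KKT point, so the entire task reduces to (i) showing the test must fire by iteration $K-1$ and (ii) summing the inner work of Algorithm~\ref{alg2} over the outer loop. Both (i) and (ii) rest on a single uniform control of the dual iterates, which I establish first.

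Each inner call returns an $\vareps_k$-stationary point of the $\rho$-strongly convex map $\bar\cL_{\beta_k}(\cdot,\vp^k)$, so Remark~\ref{rm:dist-x-pF} bounds the subproblem suboptimality by $e_k\le\vareps_k^2/\rho$. With the prescribed $\vareps_k=\bar\vareps$ and the identities $(\min\{1,\sqrt\rho\})^2=\min\{1,\rho\}$ and $\min\{1,\rho\}/\rho\le1$, this collapses to the $\rho$-free bound $e_k\le\frac{\sigma-1}{\sigma+1}\frac{\vareps^2}{4}$ for every $k$. Substituting into \eqref{p-bound} with $\beta_t=\beta_0\sigma^t$ and summing the geometric series $\sum_{t=0}^{k-1}\beta_t e_t\le\frac{\vareps^2}{4}\frac{\beta_0(\sigma^k-1)}{\sigma+1}$ — which is dominated by its final term precisely because $e_t$ is frozen while $\beta_t$ grows — yields the master estimate $\|\vp^k\|\le 2\|\bar\vp^*\|+\frac{\vareps}{\sqrt2}\sqrt{\beta_0\sigma^k/(\sigma+1)}$, valid for all $k\le K-1$.

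For the outer bound, the choice $K=\lceil\log_\sigma\widehat C_\vareps\rceil+1$ guarantees $\beta_{K-1}\ge\beta_0\widehat C_\vareps$, and I evaluate $\mathrm{ERROR}$ at $k=K-1$. Inserting the master estimate into $\tfrac{\|\vp^{K-1}\|+\|\vp^K\|}{\beta_{K-1}}$ gives a term $\tfrac{4\|\bar\vp^*\|}{\beta_{K-1}}$ and a term bounded, via $(1+\sqrt\sigma)^2\le2(\sigma+1)$, by $\vareps/\sqrt{\beta_{K-1}}$; the thresholds $\beta_{K-1}\ge8\|\bar\vp^*\|/\vareps$ and $\beta_{K-1}\ge4$ (the second and third entries of $\widehat C_\vareps$) push each below $\vareps/2$. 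For the complementary-slackness term I use \eqref{eq:alm-z}: wherever $z_i^K>0$ one has $z_i^Kf_i(\vx^K)=z_i^K(z_i^K-z_i^{K-1})/\beta_{K-1}$ and the product vanishes otherwise, so Cauchy--Schwarz gives $\sum_{i=1}^m|z_i^Kf_i(\vx^K)|\le\|\vp^K\|(\|\vp^K\|+\|\vp^{K-1}\|)/\beta_{K-1}$ with leading piece $8\|\bar\vp^*\|^2/\beta_{K-1}$; with $\vareps\le\tfrac12$ the first entry $10\|\bar\vp^*\|^2/(\beta_0\vareps)$ of $\widehat C_\vareps$ covers this together with the lower-order cross terms, so the test fires by $k=K-1$.

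For the work count, I bound the inner smoothness uniformly: feeding the master estimate into \eqref{P3-eqn-34} and using $\beta_k\le\beta_{K-1}\le\beta_0\sigma\widehat C_\vareps$ gives $L(\vz^k,\beta_k)\le\widehat L_\vareps$, whence $E_k\le\widehat E$, and since all warm starts stay in $\dom(h)$ the distances in \eqref{eq:total-iter-apg} are $\le D$, so each logarithmic factor is bounded by the one displayed (the $\Theta(1)$ gap between $\vareps_k$ and $\vareps$ being absorbed). Theorem~\ref{thm:total-iter-apg} then charges subproblem $k$ at most $3E_k+6T_k-3$ evaluations; summing over $k=0,\dots,K-1$ produces the additive $(3\widehat E+3)K$ and a factor $6\sqrt{\gamma_1}\sum_{k=0}^{K-1}\sqrt{L(\vz^k,\beta_k)/\rho}$ times the common log. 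Writing $\sqrt{L(\vz^k,\beta_k)/\rho}\le\sqrt\kappa+\sqrt{\beta_kL_c/\rho}+\big(\tfrac{\vareps\sqrt{\beta_0}}{\sqrt2\,\rho}\sqrt{\sum_{i=1}^mL_i^2}\big)^{1/2}\sigma^{k/4}$ — where the constant block $L_0+2\rho+2\|\bar\vp^*\|\sqrt{\sum_{i=1}^mL_i^2}$ equals $\rho\kappa$ — and summing the $\sigma^{k/2}$ and $\sigma^{k/4}$ geometric tails with $\sigma^{K-1}\le\sigma\widehat C_\vareps$ reproduces the stated $\sqrt\kappa\,K+\widetilde C_\vareps$. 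The main obstacle is the tension shared by this paragraph and the second one: $\beta_t$ grows geometrically while $e_t$ is frozen at $\Theta(\vareps^2)$, and one must simultaneously keep $\sum_t\beta_te_t=O(\vareps^2\beta_{K-1})$ so that $\|\vp^k\|$ stays bounded and verify that the \emph{same} three constants in $\widehat C_\vareps$ dominate the leading and all cross terms (using $\vareps\le\tfrac12$) of the quadratic complementary-slackness quantity — that constant bookkeeping, rather than any single inequality, is the crux.
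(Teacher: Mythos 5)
Your overall route is the paper's own: bound the subproblem errors by $e_t\le\vareps_t^2/\rho$ via Remark~\ref{rm:dist-x-pF}, feed them into the dual bounds \eqref{p-sq-bound}--\eqref{p-bound}, show the stopping test fires by $k=K-1$, and sum the per-call cost from Theorem~\ref{thm:total-iter-apg} with the three-way split of $\sqrt{L(\vz^k,\beta_k)}$. The primal-feasibility piece and the evaluation count match the paper's proof essentially line for line.

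The one genuine problem is the complementary-slackness step. You bound $\sum_i|z_i^Kf_i(\vx^K)|$ by Cauchy--Schwarz as $\|\vp^K\|\big(\|\vp^K\|+\|\vp^{K-1}\|\big)/\beta_{K-1}$ and then insert the \emph{norm} bound \eqref{p-bound}. Squaring that bound produces a cross term $8\|\bar\vp^*\|\sqrt{2S_K}$ with $S_K=\sum_t\beta_te_t\le\vareps^2\beta_{K-1}/4$, so after dividing by $\beta_{K-1}$ you are left with a contribution of order $\frac{8\|\bar\vp^*\|\vareps}{\sqrt{2\beta_{K-1}}}\le\frac{4}{\sqrt{5}}\,\vareps^{3/2}$ under the first threshold $\beta_{K-1}\ge 10\|\bar\vp^*\|^2/\vareps$. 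Together with the leading piece $\tfrac{8}{10}\vareps$ and the tail $4S_K/\beta_{K-1}\le\vareps^2$, the total is $0.8\vareps+1.79\,\vareps^{3/2}+\vareps^2$, which exceeds $\vareps$ for all $\vareps\gtrsim 0.011$; at $\vareps=\tfrac12$ it is about $2.6\vareps$. So your claim that the entry $10\|\bar\vp^*\|^2/(\beta_0\vareps)$ ``covers the cross terms using $\vareps\le\tfrac12$'' fails on most of the stated range $(0,\tfrac12]$, and the theorem with the given $\widehat C_\vareps$ and $K$ is not established. The paper avoids this entirely by working componentwise: since $z_i^k>0$ implies $z_i^kf_i(\vx^k)=z_i^k(z_i^k-z_i^{k-1})/\beta_{k-1}$ and $|z(z-w)|\le z^2+\tfrac14w^2$ for $z,w\ge0$, one gets $\sum_i|z_i^Kf_i(\vx^K)|\le\frac{1}{\beta_{K-1}}\big(\|\vz^K\|^2+\tfrac14\|\vz^{K-1}\|^2\big)$ and then uses the \emph{squared} bound \eqref{p-sq-bound}, $\|\vp^k\|^2\le4\|\bar\vp^*\|^2+4S_k$, which has no cross term; this yields $\frac{5\|\bar\vp^*\|^2}{\beta_{K-1}}+\vareps^2\le\tfrac{\vareps}{2}+\tfrac{\vareps}{2}$ exactly as budgeted. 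Replace your Cauchy--Schwarz step with this componentwise estimate (or enlarge $\widehat C_\vareps$ by a constant factor) and the proof closes.
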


\begin{proof}
Applying the result in \eqref{eq:gap-Fxt}, we have $\bar\cL_{\beta_k}(\vx^{k+1},\vp^k)\le \min_\vx\bar\cL_{\beta_k}(\vx,\vp^k)+\frac{\vareps_k^2}{\rho}$. Hence, we obtain from \eqref{p-sq-bound} and \eqref{p-bound} with $e_t=\frac{\vareps_t^2}{\rho}$ that
\begin{equation}\label{p-bound-kkt}
\|\vp^k\|^2 \le 4\|\bar\vp^*\|^2+ 4\sum_{t=0}^{k-1}\beta_t \frac{\vareps_t^2}{\rho},\quad
\left\Vert \vp^k \right\Vert \leq 2 \left\Vert \bar\vp^* \right\Vert + \sqrt{2\sum_{t=0}^{k-1}\beta_t \frac{\vareps_t^2}{\rho}},\, \forall\, k\ge 0.
\end{equation}
Noticing $\sum_{t=0}^{k-1}\beta_t\vareps_t^2=\beta_0\bar\vareps^2\frac{\sigma^k-1}{\sigma-1}$, we have
\begin{equation}\label{eq:bd-for-feas2}
\frac{\|\vp^k\|+\|\vp^{k+1}\|}{\beta_k} \le \frac{4\|\bar\vp^*\|}{\beta_0\sigma^k} + \frac{\sqrt{2\bar\vareps^2/\rho}(\sqrt\sigma+1)}{\sqrt{\beta_0(\sigma-1)\sigma^k}}\le \frac{4\|\bar\vp^*\|}{\beta_0\sigma^k} + \frac{\vareps}{\sqrt{\beta_0\sigma^k}},
\end{equation}
where the first inequality holds because of the choice of $\bar\vareps$ and the fact $\sqrt\sigma + 1\le \sqrt{2\sigma+2}$.
Hence, with the choice of $K$ in \eqref{eq:K-max-iter-kkt}, we have $\frac{\|\vp^{K-1}\|+\|\vp^{K}\|}{\beta_{K-1}}\le \vareps$.

Since $\vz^k\ge\vzero$, it holds $\sum_{i=1}^{m} \left| z_i^k f_i(\vx^k) \right| = \sum_{i:z_i^k > 0} \left| z_i^k f_i(\vx^k) \right|$. In addition, notice from \eqref{eq:alm-z} that $z_i^k>0$ implies $z_i^k = z_i^{k-1}+\beta_{k-1}f_i(\vx^k)$. Hence, 
\begin{align}\label{eq:bd-compl-0}
\textstyle\sum_{i=1}^{m} \left| z_i^k f_i(\vx^k) \right| = \sum_{i:z_i^k > 0} \left| z_i^k \frac{z_i^k-z_i^{k-1}}{\beta_{k-1}} \right|
\leq &~ \textstyle\frac{1}{\beta_{k-1}} \left(\|\vz^k\|^2 + \frac{1}{4}\|\vz^{k-1}\|^2\right), %\sum_{i=1}^m\max\big\{(z_i^k)^2,\, z_i^k z_i^{k-1}\big\}\cr
%\leq &~  \textstyle\frac{1}{\beta_{k-1}} \max\big\{\Vert \vz^k\Vert^2,\, \frac{\Vert \vz^k \Vert^2 + \Vert \vz^{k-1} \Vert^2}{2}\big\},
\end{align}
which together with \eqref{p-bound-kkt} gives
\begin{equation}\label{eq:bd-compl-pre}
\sum_{i=1}^{m} \left| z_i^k f_i(\vx^k) \right|\le \frac{1}{\beta_{k-1}} \left(5\|\bar\vp^*\|^2+ 4\sum_{t=0}^{k-1}\beta_t \frac{\vareps_t^2} \rho + \sum_{t=0}^{k-2}\beta_t \frac{\vareps_t^2}\rho\right)= \frac{1}{\beta_0\sigma^{k-1}} \left(5\|\bar\vp^*\|^2+\frac{\beta_0\bar\vareps^2}{\rho}\frac{4\sigma^k+\sigma^{k-1}-5}{\sigma-1}\right).
\end{equation}
Plugging $\bar\vareps=\sqrt\frac{\sigma-1}{\sigma+1}\frac{\vareps}2\min\{1,\sqrt{\rho}\}$ into the above inequality and using $\vareps\le \frac{1}{2}$ yields
\begin{equation}\label{eq:bd-compl}
\sum_{i=1}^{m} \left| z_i^k f_i(\vx^k) \right|\le \frac{1}{\beta_0\sigma^{k-1}} \left(5\|\bar\vp^*\|^2+\frac{\beta_0\vareps(4\sigma^k+\sigma^{k-1}-5)}{8(\sigma+1)}\right)\le \frac{5\|\bar\vp^*\|^2}{\beta_0\sigma^{k-1}} + \frac{\vareps}{2}.
\end{equation}
Hence, by setting $K$ as that in \eqref{eq:K-max-iter-kkt}, we have $\sum_{i=1}^{m} \left| z_i^K f_i(\vx^K)\right|\le \vareps$. Therefore, Algorithm~\ref{alg:ialm-stcvx} must stop within $K$ iterations, and the output $\vx^K$ is an $\vareps$-KKT point of \eqref{eq:ccp} by Theorem~\ref{thm:stop-eps-sol}. Below, we estimate the total number of APG iterations.

From \eqref{eq:def-D} and Theorem~\ref{thm:total-iter-apg} with $\vareps=\vareps_k=\bar\vareps$, it suffices to produce $\vx^{k+1}$ by $3E_k + 6T_k - 3$ evaluations of objective and gradient, where $E_k = \left\lceil \log_{\gamma_1}\frac{L(\vz^k,\beta_k)}{\rho} \right\rceil$ and
\begin{equation}\label{eq:bd-iter-per-out2}
T_k= \left\lceil \sqrt{\frac{L(\vz^k,\beta_k) \gamma_1}{\rho}}\log \frac{2(1+\gamma_1)L(\vz^k,\beta_k) D \sqrt{ \frac{\gamma_1 L(\vz^k,\beta_k)}{\rho}+1 }}{\vareps} \right\rceil .
\end{equation}
Now, we obtain from \eqref{P3-eqn-34} and  \eqref{p-bound-kkt} that
\begin{align}\label{eq:bd-lzk-kkt}
L(\vz^k,\beta_k)\le &~ \textstyle L_0+2\rho+\beta_k L_c+\sqrt{\sum_{i=1}^mL_i^2}\left(2\left\Vert \bar\vp^* \right\Vert + \sqrt{2\sum_{t=0}^{k-1}\beta_t \vareps_t^2/\rho}\right)\cr
\le &~ \textstyle L_0+2\rho+\beta_0 L_c \sigma^k+\sqrt{\sum_{i=1}^mL_i^2}\left(2\left\Vert \bar\vp^* \right\Vert + \sqrt{\frac{\sigma^k-1}{2(\sigma+1)}\beta_0 \vareps^2}\right),
\end{align}
where in the second inequality, we have used $\vareps_t=\sqrt\frac{\sigma-1}{\sigma+1}\frac{\vareps}2\min\{1,\sqrt{\rho}\}$ for each $t$. Hence, $L(\vz^k,\beta_k)\le \widehat L_\vareps$ for each $k\le K-1$ by the fact $\sigma^k < \sigma \widehat C_\vareps,\,\forall\, k\le K-1$, and also 
\begin{align*}
\sum_{k=0}^{K-1}\sqrt{L(\vz^k,\beta_k)}\le &~ \sum_{k=0}^{K-1}\left( \left(L_0+2\rho+2\|\bar\vp^*\|\sqrt{\textstyle\sum_{i=1}^mL_i^2}\right)^\frac{1}{2}+\sqrt{\beta_0 L_c \sigma^k}+\left(\vareps\sqrt{\textstyle\sum_{i=1}^mL_i^2}\sqrt{\textstyle\frac{\beta_0\sigma^k}{2(\sigma+1)}}\right)^\frac{1}{2}\right)\cr
\le&~ \left(L_0+2\rho+2\|\bar\vp^*\|\sqrt{\textstyle\sum_{i=1}^mL_i^2}\right)^\frac{1}{2}K+\frac{\sqrt{\beta_0L_c\sigma^K}}{\sqrt{\sigma}-1}+\left(\vareps\sqrt{\textstyle\sum_{i=1}^mL_i^2}\sqrt{\textstyle\frac{\beta_0}{2(\sigma+1)}}\right)^\frac{1}{2}\frac{\sigma^{K/4}}{\sigma^{1/4}-1}.
\end{align*}
Summing up \eqref{eq:bd-iter-per-out2} over $k=0$ through $K-1$, plugging the above inequality,  and using $L(\vz^k,\beta_k)\le \widehat L_\vareps, \forall\,k\le K-1$ and $\sigma^K < \sigma^2\widehat C_\vareps$, we obtain the desired upper bound on $T_{\mathrm{total}}$ and complete the proof.
\end{proof}

\begin{remark}
	We set $\vareps_k=\sqrt\frac{\sigma-1}{\sigma+1}\frac{\vareps}2\min\{1,\sqrt{\rho}\}$ for the convenience of analysis. From \eqref{eq:bd-for-feas2} and \eqref{eq:bd-compl-pre}, we see that a complexity result of the same order can be obtained if $\vareps_k < \min\big\{\vareps,\, \sqrt{\frac{\vareps\rho(\sigma-1)}{4\sigma}}\big\}$ for each $k$. Although we analyze the iALM for solving \eqref{barsub}, the result $O(\vareps^{-\frac{1}{2}}|\log\vareps|)$ holds for any strongly convex problems with convex smooth constraints.
\end{remark}

\subsection{A penalty method with estimated multipliers for \eqref{barsub}}
In this subsection, we design a quadratic penalty method to approximately solve \eqref{barsub}. %Given $\vareps>0$, the penalty method can give an $\frac{\vareps}{2}$-KKT point of \eqref{barsub}, and more importantly, it can guarantee an inequality similar to that in \eqref{eq:dec-obj} with a complexity of $O(1/\sqrt\vareps)$ even though $\tilde\vareps=O(\vareps^2)$. Therefore, if we use the penalty method for the subproblem \eqref{barsub} with $\bar\vx=\vx^k$ for all $k$, then we can have a complexity result of order $O(\vareps^{-\frac{5}{2}} |\log\vareps|)$. However, we notice that 
Numerically, an iALM can be significantly more efficient than a classic quadratic penalty method. Motivated by this, we propose a \textbf{pen}alty \textbf{m}ethod with estimated \textbf{m}ultipliers (PenMM), where the estimated multipliers are provided by iALM. The pseudocode is given in Algorithm~\ref{alg:penalty}. Notice that this algorithm is different from Algorithm~\ref{alg:ialm-stcvx} at the update to $\vy$ and $\vz$. PenMM always uses the initially provided multipliers in the update of $\vp$, while iALM uses the current estimated multipliers in the update. {In addition, the dual update in Line 6 is not needed, as the $\vareps$-dual feasibility is implied by the $\vareps_k$-stationarity of the solution $\vx^{k+1}$ by AdpAPG in Line 5. However, we keep $(\vy^k,\vz^k)$ for use in the analysis}. On solving \eqref{barsub}, the iALM and PenMM achieve similar complexity results. However, the use of PenMM in our proposed method for solving \eqref{eq:ccp} is critical to obtaining the $O(\vareps^{-\frac{5}{2}}|\log\vareps|)$ complexity result in order to produce an $\vareps$-KKT point.

\begin{algorithm}[h]
	{\small
		\caption{$(\beta_{\mathrm{out}}, \vx_{\mathrm{out}},\vp_{\mathrm{out}})=\mathrm{PenMM}(\vareps,\beta_0,\sigma,\rho,\bar\vx,\bar\vp, L_{\min}, \gamma_1, \gamma_2)$ for solving \eqref{barsub}}\label{alg:penalty}
		\DontPrintSemicolon
		\textbf{Input:} error tolerance $\vareps>0$, $\beta_0>0$, $\sigma>1$, weak-convexity constant $\rho>0$, $(\bar\vx, \bar\vp)\in \dom(h)\times \cP$, and $\gamma_1>1$, $\gamma_2\ge 1$,  $L_{\min}>0$;\;
		\textbf{Initialization:} let $\vx^0=\bar\vx, \vy^0=\bar\vy, \vz^0=\bar\vz$, $k = 0$;\;
		\While{$(\vx^k,\vy^k,\vz^k)$ is not an ${\vareps}$-KKT point of \eqref{barsub}}{
			Choose $\vareps_k = \bar{\vareps} = {\vareps}\min\{1,\sqrt{\rho}\}$;\;
			Apply Algorithm~\ref{alg2} to find $\vx^{k+1}=\mathrm{AdapAPG}(F-h, h, \rho, \vareps_k, L_{\min}, \gamma_1, \gamma_2)$ with $F(\cdot) = \bar\cL_{\beta_k}(\cdot,\bar\vp)$;\;
			Let $\vy^{k+1}=\bar\vy + \beta_k(\vA\vx^{k+1}-\vb)$ and $\vz^{k+1}=\left[\bar\vz+\beta_k \vf(\vx^{k+1})\right]_+$;\;
			Set $\beta_{k+1}=\sigma\beta_k$ and increase $k\gets k+1$. 
		} 
		\textbf{Output}: let $\vx_{\mathrm{out}}=\vx^k, \vp_{\mathrm{out}}=\vp^k$, and let $\beta_{\mathrm{out}}=\beta_{k-1}$ if $k\ge1$, and $\beta_{\mathrm{out}}=\beta_{0}$ otherwise.
	}
\end{algorithm}

Below, we show that if $\beta_k$ is sufficiently large, $(\vx^{k+1},\vy^{k+1},\vz^{k+1})$ must be an ${\vareps}$-KKT point of \eqref{barsub}, and thus the stopping condition in Algorithm~\ref{alg:penalty} must be satisfied after finitely many updates.

\begin{theorem}\label{thm:bd-betak}
	Suppose that the conditions in Assumptions~\ref{assump:composite} through \ref{assump:wkcvx} hold.  Let $\vareps\le \frac{3}{32}$ and $\bar\vp^*$ be a dual solution of \eqref{barsub} satisfying the conditions in \eqref{eq:kkt-system}. If for some $k$, it holds
	\begin{equation}\label{eq:cond-betak-pen}
	\textstyle  \beta_k\ge \bar\beta :=\max\left\{\frac{4\big(\|\bar\vp^*\|^2+\|\bar\vp^*-\bar\vp\|^2\big)}{\vareps},\, \frac{4\|\bar\vp^*-\bar\vp\|}{\vareps},\, 8\right\},
	\end{equation}
	then $(\vx^{k+1},\vy^{k+1},\vz^{k+1})$ must be an ${\vareps}$-KKT point of \eqref{barsub}.
\end{theorem}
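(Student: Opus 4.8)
The plan is to verify, one at a time, the three inequalities that make $(\vx^{k+1},\vy^{k+1},\vz^{k+1})$ an $\vareps$-KKT point of \eqref{barsub} (the analog of Definition~\ref{def:eps-kkt} for the objective $\bar g+h$): approximate stationarity of $\bar\cL_0(\cdot,\vp^{k+1})$, primal feasibility, and complementary slackness. The stationarity comes for free and does not even use the hypothesis on $\beta_k$: the AdapAPG call in Line~5 returns $\vx^{k+1}$ with $\dist\big(\vzero,\partial_\vx\bar\cL_{\beta_k}(\vx^{k+1},\bar\vp)\big)\le\vareps_k$, and substituting the dual updates $\vy^{k+1}=\bar\vy+\beta_k(\vA\vx^{k+1}-\vb)$ and $\vz^{k+1}=[\bar\vz+\beta_k\vf(\vx^{k+1})]_+$ into $\partial_\vx\bar\cL_{\beta_k}(\vx^{k+1},\bar\vp)$ yields the identity $\partial_\vx\bar\cL_{\beta_k}(\vx^{k+1},\bar\vp)=\partial_\vx\bar\cL_0(\vx^{k+1},\vp^{k+1})$, exactly as in the proof of Theorem~\ref{thm:stop-eps-sol}. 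Hence $\dist\big(\vzero,\partial_\vx\bar\cL_0(\vx^{k+1},\vp^{k+1})\big)\le\vareps_k\le\vareps$.

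The workhorse for the other two conditions is a single bound on $\|\vp^{k+1}-\bar\vp^*\|$. Since the PenMM update from the fixed estimate $\bar\vp$ is precisely one iALM step taken from $\vp^0=\bar\vp$, I would apply \eqref{eq:bound-dual-xu} with a single iteration ($\vp^0=\bar\vp$, $\beta_0=\beta_k$, $e_0=e_k$). By Remark~\ref{rm:dist-x-pF} and the $\rho$-strong convexity of $\bar\cL_{\beta_k}(\cdot,\bar\vp)$, the inexactness level is $e_k=\vareps_k^2/\rho$, and the choice $\vareps_k=\vareps\min\{1,\sqrt\rho\}$ forces $e_k\le\vareps^2$. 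This gives the master estimate
\begin{equation*}
\|\vp^{k+1}-\bar\vp^*\|^2 \le \|\bar\vp-\bar\vp^*\|^2 + 2\beta_k\vareps^2 .
\end{equation*}
For feasibility I reuse the identity $\|\vA\vx^{k+1}-\vb\|^2+\|[\vf(\vx^{k+1})]_+\|^2\le\tfrac{1}{\beta_k^2}\|\vp^{k+1}-\bar\vp\|^2$, derived exactly as \eqref{P2-eqn-4.13-sum} but with the update measured against $\bar\vp$ rather than $\vp^k$. Combining the triangle inequality $\|\vp^{k+1}-\bar\vp\|\le\|\vp^{k+1}-\bar\vp^*\|+\|\bar\vp^*-\bar\vp\|$ with the master estimate and $\sqrt{a+b}\le\sqrt a+\sqrt b$, I expect
\begin{equation*}
\frac{\|\vp^{k+1}-\bar\vp\|}{\beta_k}\le\frac{2\|\bar\vp-\bar\vp^*\|}{\beta_k}+\frac{\sqrt2\,\vareps}{\sqrt{\beta_k}},
\end{equation*}
whose two terms are each at most $\vareps/2$ once $\beta_k\ge\frac{4\|\bar\vp-\bar\vp^*\|}{\vareps}$ and $\beta_k\ge8$, i.e. the second and third arguments of $\bar\beta$.

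For complementary slackness I would follow the template of \eqref{eq:bd-compl-0}: since $z_i^{k+1}>0$ forces $z_i^{k+1}=\bar z_i+\beta_k f_i(\vx^{k+1})$, the product bound together with Young's inequality gives $\sum_i|z_i^{k+1}f_i(\vx^{k+1})|\le\frac{1}{\beta_k}\big(\|\vz^{k+1}\|^2+\frac14\|\bar\vz\|^2\big)$. I then bound $\|\vz^{k+1}\|^2\le\|\vp^{k+1}\|^2\le 2\|\bar\vp^*\|^2+2\|\vp^{k+1}-\bar\vp^*\|^2$ and $\|\bar\vz\|^2\le\|\bar\vp\|^2\le 2\|\bar\vp^*\|^2+2\|\bar\vp-\bar\vp^*\|^2$, and insert the master estimate; this produces $\frac{2.5(\|\bar\vp^*\|^2+\|\bar\vp-\bar\vp^*\|^2)}{\beta_k}+4\vareps^2$. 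The condition $\beta_k\ge\frac{4(\|\bar\vp^*\|^2+\|\bar\vp-\bar\vp^*\|^2)}{\vareps}$, the first argument of $\bar\beta$, makes the first term at most $\frac58\vareps$. This last step is the main obstacle: the inexact dual update injects an irreducible $4\vareps^2$ contribution into the complementarity residual that cannot be removed by enlarging $\beta_k$, so it must be absorbed into the target tolerance itself — which is precisely why the hypothesis $\vareps\le\frac{3}{32}$ is imposed, as it yields $4\vareps^2\le\frac38\vareps$ and leaves exactly $\frac58\vareps$ of room for the $O(1/\beta_k)$ part. Collecting the three lower bounds on $\beta_k$ reproduces $\bar\beta$ in \eqref{eq:cond-betak-pen} and completes the verification.
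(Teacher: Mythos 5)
Your proposal is correct and follows essentially the same route as the paper's proof: the same three-way split into stationarity (free from the AdapAPG tolerance and the dual-update identity), primal feasibility via $\|\vp^{k+1}-\bar\vp\|/\beta_k$, and complementarity via the Young-type bound $\frac{1}{\beta_k}(\|\vz^{k+1}\|^2+\frac14\|\bar\vz\|^2)$, all driven by the single-step instance of \eqref{eq:bound-dual-xu} giving $\|\vp^{k+1}-\bar\vp^*\|^2\le\|\bar\vp-\bar\vp^*\|^2+2\beta_k\vareps^2$. Your accounting of why $\vareps\le\frac{3}{32}$ is needed (to absorb the irreducible $4\vareps^2\le\frac38\vareps$ term alongside the $\frac58\vareps$ from the first argument of $\bar\beta$) matches the paper's argument exactly.
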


\begin{proof}
First notice that $\partial_\vx \cL_{\beta_k}(\vx^{k+1},\bar\vy,\bar\vz) =\partial_\vx \cL_{0}(\vx^{k+1},\vy^{k+1},\vz^{k+1})$. Hence, it holds
\begin{equation}\label{eq:dual-feas-barsub}
\dist\left(\vzero, \partial f_0(\vx^{k+1})+2\rho(\vx^{k+1}-\bar\vx)+\vA^\top\vy^{k+1}+\sum_{i=1}^m z_i^{k+1}\nabla f_i(\vx^{k+1})\right) \le \vareps_k \le {\vareps}.
\end{equation}
Secondly, by the same arguments showing \eqref{P2-eqn-4.13-sum}, we have
\begin{equation}\label{eq:primal-feas-barsub}
\left\Vert \big[\vf(\vx^{k+1})\big]_+ \right\Vert^2+ \left\Vert \vA\vx^{k+1}-\vb \right\Vert^2 \leq \frac{1}{\beta_k^2}\Vert \vp^{k+1}-\bar\vp \Vert^2.
\end{equation}
Since $F$ is $\rho$-strongly convex, %$\nabla(F-h)$ is $(L(\bar\vz,\beta_k)+2\rho)$-Lipschitz continuous, 
it follows from \eqref{eq:gap-Fxt} and $\vareps_k={\vareps}\min\{1,\sqrt\rho\}$ that 
\begin{equation}\label{eq:obj-dec-penalty}
\cL_{\beta_k}(\vx^{k+1},\bar\vp)+\rho\|\vx^{k+1}-\bar\vx\|^2\le \min_\vx \{\cL_{\beta_k}(\vx,\bar\vp)+\rho\|\vx-\bar\vx\|^2\} + {\vareps^2}, \end{equation}
and thus by \eqref{eq:bound-dual-xu} and noting that one single update is performed from $\bar\vp$ to $\vp^{k+1}$, we have
\begin{equation}\label{eq:bd-p-barsub}
\Vert \vp^{k+1}-\bar\vp^* \Vert^2\le \|\bar\vp^*-\bar\vp\|^2 + 2{\beta_k\vareps^2}.
\end{equation}
Therefore, it holds from \eqref{eq:primal-feas-barsub} that
\begin{equation}\label{eq:primal-feas-barsub-2}
\sqrt{\textstyle\left\Vert [\vf(\vx^{k+1})]_+ \right\Vert^2+ \left\Vert \vA\vx^{k+1}-\vb \right\Vert^2} \le \frac{1}{\beta_k}\left(\Vert \vp^{k+1}-\bar\vp^* \Vert+\|\bar\vp^*-\bar\vp\|\right)\le \frac{1}{\beta_k}\left(\textstyle 2\|\bar\vp^*-\bar\vp\|+\vareps\sqrt{ 2{\beta_k}}\right)\overset{\eqref{eq:cond-betak-pen}}\le {\vareps}.
\end{equation}
Thirdly, by the same arguments in \eqref{eq:bd-compl-0}, we have
$\sum_{i=1}^m\left|z_i^{k+1}f_i(\vx^{k+1})\right| \le \frac{1}{\beta_k} \left(\|\vz^{k+1}\|^2+\frac{1}{4}\|\bar\vz\|^2\right).%\max\left\{\|\vz^{k+1}\|^2,\, \|\vz^{k+1}\|\cdot\|\bar\vz\|\right\}\le \frac{1}{\beta_k}\max\left\{\|\vz^{k+1}\|^2,\, \frac{1}{2}\|\vz^{k+1}\|^2 + \frac{1}{2}\|\bar\vz\|^2\right\}.
$ From \eqref{eq:bd-p-barsub}, it follows that $\|\vz^{k+1}\|^2\le 2\|\bar\vp^*\|^2+2\|\bar\vp^*-\bar\vp\|^2 + 4\beta_k \vareps^2.$ In addition, notice $\|\bar\vz\|^2\le \|\bar\vp\|^2\le 2\|\bar\vp^*\|^2+2\|\bar\vp^*-\bar\vp\|^2.$ %Since $\vareps\le \frac{1}{4}$, it holds $\vareps^2\le \frac{\vareps}{4}$, and thus by the condition of $\beta_k$ in \eqref{eq:cond-betak-pen}, we have $\frac{\|\vz^{k+1}\|^2}{\beta_k}\le \frac{\vareps}{2}$. , and thus $\frac{\|\bar\vz\|^2}{\beta_k}\le \frac{\vareps}{2}$ holds as well. 
Therefore, by the condition of $\beta_k$ in \eqref{eq:cond-betak-pen} and also $\vareps \le \frac{3}{32}$, we have $\sum_{i=1}^m\left|z_i^{k+1}f_i(\vx^{k+1})\right| \le {\vareps}$, which together with \eqref{eq:dual-feas-barsub} and \eqref{eq:primal-feas-barsub-2} implies the desired result.
\end{proof}

The next theorem gives an upper bound on the total number of gradient and function value evaluations that Algorithm~\ref{alg:penalty} needs to return the output.
\begin{theorem}[complexity results of PenMM]\label{thm:complexity-pen}
	Suppose that the conditions in Assumptions~\ref{assump:composite} through \ref{assump:wkcvx} hold. Let $\vareps\le \frac{3}{32}$. To return the output, the total number of APG iterations that Algorithm~\ref{alg:penalty} takes is	
	$$T_{\mathrm{total}}\le 6\alpha_K\sqrt{\frac{\gamma_1}{\rho}}\log \left( \frac{2(1+\gamma_1)\bar{L}D\sqrt{\frac{\gamma_1 \bar{L}}{\rho}+1}}{\vareps} \right) + \left(3\bar{E}+3\right)K
	,$$
	\color{black}
% 2\left(\textstyle 2+\log_{\gamma_1}  \frac{\bar{L}}{L_{\min}}\right)\left( \textstyle K + (1+\sqrt{2}) \alpha_K \sqrt{\frac{\gamma_1}{\rho}}  \log\frac{ D\left(\sqrt{\gamma_1 \bar{L}}+\frac{\bar{L}}{\sqrt{L_{\min}}}\right)\frac{\sqrt{\gamma_1 \bar{L}}}{\sqrt{2}} }{ \bar{\vareps} } \right)		
	where $\bar\beta$ is given in \eqref{eq:cond-betak-pen}, $L_c$ is defined in \eqref{eq:def-Lc-kappac}, and
	$$\textstyle K=\left\lceil \log_\sigma \frac{\bar\beta}{\beta_0}\right\rceil + 1, \quad \bar\vareps = {\vareps}\min\{1,\sqrt{\rho}\}, \quad \bar L= L_0+2\rho+L_c\sigma\bar\beta+\|\bar\vz\|\sqrt{\sum_{i=1}^mL_i^2},$$
	$$\alpha_K = K\sqrt{2\rho} + K\left(L_0+\Vert \bar{\vz} \Vert\sqrt{\sum_{i=1}^{m}L_i^2}\right)^{\frac{1}{2}}+\frac{\sigma \sqrt{\bar{\beta}L_c}}{\sqrt{\sigma}-1}, \bar{E} = \left\lceil \log_{\gamma_1}\frac{\bar{L}}{\rho} \right\rceil.$$
\end{theorem}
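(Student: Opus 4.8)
The plan is to reproduce the two-stage argument used for the iALM in Theorem~\ref{thm:iter-eps-kkt}: first bound the number of outer iterations (equivalently, the number of calls to AdapAPG) of Algorithm~\ref{alg:penalty}, and then bound the cost of each call via Theorem~\ref{thm:total-iter-apg}, summing over all outer iterations. For the outer-iteration count, I would use $\beta_{k+1}=\sigma\beta_k$, so $\beta_k=\beta_0\sigma^k$, and observe that the threshold $\beta_k\ge\bar\beta$ of Theorem~\ref{thm:bd-betak} is first met at $k^\star=\lceil\log_\sigma(\bar\beta/\beta_0)\rceil$. By Theorem~\ref{thm:bd-betak}, the iterate $(\vx^{k^\star+1},\vy^{k^\star+1},\vz^{k^\star+1})$ computed at that step is then an $\vareps$-KKT point, so the while-loop check succeeds right after producing it; hence the loop body runs at most $k^\star+1=K$ times, giving $K$ calls to AdapAPG.

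For the per-call cost, the key simplification over the iALM proof is that PenMM keeps the multiplier fixed at $\bar\vp$, so the subproblem $F(\cdot)=\bar\cL_{\beta_k}(\cdot,\bar\vp)$ has a smoothness constant $L(\bar\vz,\beta_k)$ depending only on the constant $\|\bar\vz\|$ rather than on a growing dual iterate; no dual-growth estimate such as \eqref{p-bound-kkt} is needed. From \eqref{P3-eqn-34}, Cauchy--Schwarz on $\sum_i L_i|\bar z_i|\le\|\bar\vz\|\sqrt{\sum_i L_i^2}$, and the definition of $L_c$ in \eqref{eq:def-Lc-kappac}, I get $L(\bar\vz,\beta_k)\le L_0+2\rho+\beta_k L_c+\|\bar\vz\|\sqrt{\sum_i L_i^2}$. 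Using the definition of $K$ to obtain $\beta_k=\beta_0\sigma^k\le\sigma\bar\beta$ for all $k\le K-1$, this is at most $\bar L$, so $E_k\le\bar E$ throughout. By Theorem~\ref{thm:total-iter-apg} (with tolerance $\bar\vareps$, strong-convexity modulus $\rho$, and prox-diameter $D$), the $k$-th call costs at most $3E_k+6T_k-3$ evaluations, with $T_k$ as in \eqref{eq:bd-iter-per-out2} but with $\vz^k$ replaced by the fixed $\bar\vz$. Monotonicity of the logarithm lets me replace $L(\bar\vz,\beta_k)$ inside the log of $T_k$ by $\bar L$, pulling out the common log factor displayed in the statement.

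The remaining ingredient is the sum $\sum_{k=0}^{K-1}\sqrt{L(\bar\vz,\beta_k)}$. I would split it by subadditivity of the square root, $\sqrt{a+b}\le\sqrt a+\sqrt b$, into the $\beta_k$-independent contribution $K(\sqrt{2\rho}+\sqrt{L_0+\|\bar\vz\|\sqrt{\sum_i L_i^2}})$ plus $\sqrt{L_c}\sum_{k=0}^{K-1}\sqrt{\beta_k}$. The latter is a geometric series $\sqrt{\beta_0 L_c}\sum_{k=0}^{K-1}\sigma^{k/2}\le\sqrt{\beta_0 L_c}\,\sigma^{K/2}/(\sqrt\sigma-1)$; writing $\sigma^{K/2}=\sqrt\sigma\,\sigma^{(K-1)/2}$ and again invoking $\beta_0\sigma^{K-1}\le\sigma\bar\beta$ reduces this to $\sigma\sqrt{\bar\beta L_c}/(\sqrt\sigma-1)$. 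Collecting the three pieces gives exactly $\alpha_K$, so $\sum_{k=0}^{K-1}\sqrt{L(\bar\vz,\beta_k)}\le\alpha_K$. Assembling $T_{\mathrm{total}}=\sum_{k=0}^{K-1}(3E_k+6T_k-3)\le(3\bar E+3)K+6\sqrt{\gamma_1/\rho}\,\alpha_K\cdot(\text{log term})$, after absorbing the $+K$ coming from the ceilings in $T_k$ into the $(3\bar E+3)K$ term, yields the stated bound. The only delicate point is the bookkeeping in this geometric-sum estimate: tracking the $\sigma^{K/2}$ factor carefully against the definition of $K$ so that the constant $\sigma/(\sqrt\sigma-1)$ in $\alpha_K$ comes out exactly; everything else is routine substitution, and the argument is in fact slightly simpler than the iALM case precisely because the multiplier is frozen.
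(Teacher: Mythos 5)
Your proposal is correct and follows essentially the same route as the paper's proof: bounding the number of outer iterations by $K$ via Theorem~\ref{thm:bd-betak} and the geometric growth $\beta_k=\beta_0\sigma^k$, bounding each AdapAPG call via Theorem~\ref{thm:total-iter-apg} with the smoothness estimate $L(\bar\vz,\beta_k)\le L_0+2\rho+\beta_kL_c+\|\bar\vz\|\sqrt{\sum_iL_i^2}\le\bar L$, and summing $\sum_{k=0}^{K-1}\sqrt{L(\bar\vz,\beta_k)}\le\alpha_K$ by subadditivity of the square root and the geometric series $\sum_k\sigma^{k/2}$. Your bookkeeping of the $\sigma^{K/2}$ factor against $\beta_0\sigma^{K-1}\le\sigma\bar\beta$ matches the paper's derivation of the $\sigma\sqrt{\bar\beta L_c}/(\sqrt\sigma-1)$ term exactly.
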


\begin{proof}
From the setting of $K$, it follows $\sigma^{K-1}\ge \frac{\bar\beta}{\beta_0}$ and $\sigma^{K-1} < \frac{\sigma\bar\beta}{\beta_0}$. Since $\beta_k=\beta_0\sigma^k$ for each $k\ge 0$, we have from Theorem~\ref{thm:bd-betak} that the stopping condition of Algorithm~\ref{alg:penalty} must be satisfied when $k\ge K-1$. 
From the definition of $L(\vz,\beta)$ in \eqref{P3-eqn-34}, we have for each $k\le K-1$ that
\begin{align}
L(\bar\vz,\beta_k)\le L_0+2\rho+\beta_k L_c+\|\bar\vz\|\sqrt{\sum_{i=1}^mL_i^2}=&~L_0+2\rho+\beta_0 L_c\sigma^k+\|\bar\vz\|\sqrt{\sum_{i=1}^mL_i^2}\label{eq:bd-Lz-1}\\
\le&~L_0+2\rho+L_c\sigma\bar\beta+\|\bar\vz\|\sqrt{\sum_{i=1}^mL_i^2}\label{eq:bd-Lz-2}.
\end{align}
Hence, by Theorem~\ref{thm:total-iter-apg} and \eqref{eq:def-D}, the total number of evaluations of objective and gradient is 111
\begin{equation}\label{eq:T-total-pen}
T_{\mathrm{total}}\le 6\left( \sum_{k=0}^{K-1} \sqrt{L(\bar{\vz},\beta_k)}\sqrt{\frac{\gamma_1}{\rho}}\log\frac{2(1+\gamma_1)\bar{L}D\sqrt{\frac{\gamma_1 \bar{L}}{\rho}+1}}{\vareps} \right)+\left(3\bar{E}+3\right)K.
\end{equation}

By \eqref{eq:bd-Lz-1}, it holds
\begin{align*}
\sum_{k=0}^{K-1}\sqrt{L(\bar\vz,\beta_k)}\le &~ K\left(\textstyle L_0+2\rho+\|\bar\vz\|\sqrt{\sum_{i=1}^mL_i^2}\right)^{\frac{1}{2}}+\sqrt{\beta_0L_c}\frac{\sigma^{\frac{K}{2}}-1}{\sqrt\sigma-1}\cr
\le &~ K\left(\textstyle L_0+2\rho+\|\bar\vz\|\sqrt{\sum_{i=1}^mL_i^2}\right)^{\frac{1}{2}}+\frac{\sigma \sqrt{\bar\beta L_c}}{\sqrt\sigma-1}.
\end{align*}
Using the fact $\sqrt{a+2\rho}\le \sqrt{a}+\sqrt{2\rho}$ for any $a\ge0$, and plugging the above inequality and \eqref{eq:bd-Lz-2} into \eqref{eq:T-total-pen}, we have the desired result and complete the proof.
\end{proof}

\section{A hybrid method of multipliers}\label{sec:hybrid}
In this section, we give a new algorithm for solving nonconvex problems in the form of \eqref{eq:ccp}. Our method is a hybrid of the iALM and PenMM. The use of iALM is to estimate the multipliers and to have fast convergence, while the PenMM is used for better complexity result in the worst case.

\subsection{Complexity result with pure iALM}\label{sec:comp-ialm} 
Suppose we start from $\vx^0\in\dom(h)$. Given any $\vareps>0$, for each $k\ge0$, we can apply Algorithm~\ref{alg:ialm-stcvx} to find an $\tilde\vareps$-KKT point $\vx^{k+1}$ of \eqref{barsub} with $\bar\vx=\vx^k$. Therefore, for each $k\ge0$, there is $\vp^{k+1}$ such that $\dist\big(\vzero, \partial\cL_0(\vx^{k+1},\vp^{k+1})+2\rho(\vx^{k+1}-\vx^k)\big) \le \tilde\vareps$, where $\cL_0$ is defined in \eqref{eq:OL}. Since the near-feasibility and near-complementarity conditions for \eqref{barsub} are the same as those for \eqref{eq:ccp}, $\|\vx^{k+1}-\vx^k\|\le \frac{\vareps}{4\rho}$ and $\tilde\vareps\le \frac{\vareps}{2}$ would suffice to guarantee $\vx^{k+1}$ to be an $\vareps$-KKT of \eqref{eq:ccp}. By the $\tilde\vareps$-dual feasibility condition and the assumption in \eqref{eq:def-D}, it holds for some $\tilde\nabla f_0(\vx^{k+1})\in \partial f_0(\vx^{k+1})$ that
\begin{align*}
D \tilde\vareps \ge & ~ \left\langle \textstyle \vx^{k+1}-\vx^k, \tilde\nabla f_0(\vx^{k+1}) + \vA^\top \vy^{k+1} + \sum_{i=1}^m z_i^{k+1}\nabla f_i(\vx^{k+1})+2\rho(\vx^{k+1}-\vx^k) \right\rangle \cr
\ge & ~\textstyle f_0(\vx^{k+1}) + \frac{3\rho}{2}\|\vx^{k+1}-\vx^k\|^2 - f_0(\vx^k) +(\vy^{k+1})^\top \vA(\vx^{k+1}-\vx^k) + \sum _{i=1}^m z_i^{k+1} \big(f_i(\vx^{k+1})-f_i(\vx^k)\big)\cr
\ge &~\textstyle f_0(\vx^{k+1}) + \frac{3\rho}{2}\|\vx^{k+1}-\vx^k\|^2 - f_0(\vx^k) -  2 \tilde\vareps \|\vp^{k+1}\|,
\end{align*} 
where the second inequality follows from the convexity of $f_0 + \frac{\rho}{2}\|\cdot - \vx^k\|^2$, and in the last inequality we have used the $\tilde\vareps$-feasibility of $\vx^k$ and $\vx^{k+1}$. Therefore, by bounding $\|\vp^{k+1}\|$, we have
\begin{equation}\label{eq:dec-obj}
\textstyle f_0(\vx^{k+1}) + \frac{3\rho}{2}\|\vx^{k+1}-\vx^k\|^2 - f_0(\vx^k) \le C \tilde\vareps ,
\end{equation}
where $C$ is a universal constant depending on $D$ and the bound of dual solutions. Summing the inequality over $k$, one can easily show that for any integer $K\ge1$,
$\frac{3\rho}{2}\sum_{k=0}^{K-1}\|\vx^{k+1}-\vx^k\|^2\le f_0(\vx^0)-f_0(\vx^K)+CK \tilde\vareps.$ 
Hence, we need to take $\tilde\vareps = O(\vareps^2)$ and $K=O(\vareps^{-2})$ to guarantee $\|\vx^{k+1}-\vx^k\|\le \frac{\vareps}{4\rho}$ for some $k\le K-1$. This way, we need $O\big(\vareps^{-3}|\log \vareps|\big)$ total number of gradient and function value evaluations, by using Theorem~\ref{thm:iter-eps-kkt}. This order of complexity result is comparable to that in \cite{kong2019complexity-pen}, which uses a quadratic penalty method, but it is worse than the result $O(\vareps^{-\frac{5}{2}}|\log \vareps|)$ in \cite{lin2019inexact-pp} whose method is also a quadratic penalty method. 

This fact motivates us to mix the iALM and PenMM, in order to achieve a better complexity result of $O(\vareps^{-\frac 5 2} |\log \vareps|)$. The method in \cite{lin2019inexact-pp} is a pure-penalty method. As we will show, our hybrid method can perform significantly better than a pure-penalty one. %that only guarantees a solution $\vareps$-close to an $\vareps$-KKT point, our method can attain an $\vareps$-KKT point, and in addition, our method is Lagrangian-based and can numerically perform better than a pure-penalty method.

\subsection{Hybrid of iALM and PenMM}
In this subsection, we give the details of the hybrid method under the proximal point (PP) method framework. The mixture is not trivial. To have good numerical performance and a better complexity result, we must update the estimate of multipliers periodically but not too frequently by the iALM.
  
Suppose at the $k$-th iteration, the estimate of the primal solution is $\vx^k$. We form the $k$-th subproblem as
\begin{equation}\label{ksub}
\Min_{\vx\in\RR^n} \big\{f_0(\vx)+\rho \Vert \vx-\vx^k \Vert^2, \st \vA\vx=\vb, f_i(\vx)\le 0, i=1,\ldots, m\big\},
\end{equation}
and approximately solve it by either the iALM or PenMM method. The pseudocode is given in Algorithm~\ref{alg:ialm-noncvx}, which can be viewed as a multi-stage method. Due to the higher efficiency of the iALM over a penalty method, we approximately solve all the first $K_0$ subproblems by the iALM in the initial stage. This way, we can obtain a good estimate of the multipliers. Then we switch to the use of the PenMM method to have a better complexity result. To maintain the efficiency, we approximately solve a subproblem by the iALM at the end of each stage to update the estimate of the multipliers. It is important to notice that due to the use of an adaptive APG, our method only needs to know an upper bound on the weak-convexity constant $\rho$. {Practically, one can even use an under-estimate of $\rho$ to have fast convergence of the algorithm.} All other constants in Assumption~\ref{assump:composite},  including smoothness constants of $\{f_i\}$ and bound of $\dom(h)$, \color{black} are only required to exist, but their values are not needed.  Although there are three loops and several hyper-parameters in Algorithm~\ref{alg:ialm-noncvx}, it is not difficult to implement the algorithm efficiently in practice. %Every parameter in Algorithm~\ref{alg:ialm-noncvx} only requires very few tries to be tuned well. In particular, 
 As a practical note, we find the following parameter choices universally good across all the problem instances that we test in section~\ref{sec:experiment}: $\beta_0 = 0.01, \sigma = 3, \gamma_1 = 2, \gamma_2 = 1.25, L_{\min} = \rho$.  \color{black} % 2/1.5 = 4/3 = 1.33.

\begin{algorithm}[h] % added y0=0, z0=0
	{\small
		\caption{\textbf{H}ybrid \textbf{iA}LM and \textbf{Pe}nalty \textbf{M}ethod (HiAPeM) for weakly-convex cases of \eqref{eq:ccp}}\label{alg:ialm-noncvx}
		\DontPrintSemicolon
		\textbf{Input:} error tolerance $\vareps>0$, $\beta_0>0$, $\sigma>1$, and weak-convexity constant $\rho>0$;\;
		\textbf{Initialization:} choose $\vx^0\in\dom(h), \vy^0 = \vzero, \vz^0 = \vzero$, integer $K_0=N_0$ and $N_1$, $\gamma>1$, $0<\widehat\vareps_2\le\widehat\vareps_1 \le \vareps$, and $\gamma_1>1$, $\gamma_2\ge 1$,  $L_{\min}>0$;\;
		\For{$k=0,1,\ldots, K_0-1$}{
			%Let $g_k$ be the function defined as $g_k(\vx)=g(\vx)+\rho\|\vx-\vx^k\|^2$;  {\color{blue}\algorithmiccomment{form the $k$-th subproblem}}\;
			\If{$k<K_0-1$}{
				Call Alg.~\ref{alg:ialm-stcvx}:  $(\beta_k,\vx^{k+1},\vp^{k+1})=\mathrm{iALM}\left(\frac{\widehat\vareps_1}{2},\beta_0,\sigma,\rho,\vx^k, L_{\min}, \gamma_1,\gamma_2\right)$; {\color{blue}\algorithmiccomment{obtain $\frac{\widehat\vareps_1}{2}$-KKT}}\; 
			}
			\If{$k=K_0-1$}{
				Call Alg.~\ref{alg:ialm-stcvx}: $(\beta_k,\vx^{k+1},\vp^{k+1})=\mathrm{iALM}\left(\frac{\widehat\vareps_1}{2},\beta_0,\sigma,\rho,\vx^k, L_{\min}, \gamma_1,\gamma_2\right)$;
			}
			%to obtain an $\frac{\vareps}2$-KKT point $\vx^{k+1}$ of \eqref{ksub} and also the corresponding multiplier $\vp^{k+1}$;\;
			\If{$\|\vx^{k+1}-\vx^k\|\le \frac{\vareps}{4\rho}$}{
				output $(\vx^{k+1}, \vp^{k+1})$ and stop
			}
		}
		Set $\bar\beta_{1,0}=\beta_k$ and $\bar\vp^1=\vp^{k+1}$; {\color{blue}\algorithmiccomment{store the estimated penalty parameter and the multipliers}}\; % be the final penalty parameter from Algorithm~\ref{alg:ialm-stcvx} on solving the $K_0$-th subproblem and $\bar\vp$ be the corresponding multiplier;\;
		Let $s=1$, $t=0$, $k=K_0$, and $K_s=K_0+ N_1$; {\color{blue}\algorithmiccomment{to start the $s$-th stage, $t$ counts inner iterations}}\;
		\While{$(\vx^k,\vp^k)$ is not $\vareps$-KKT to \eqref{eq:ccp}}{
			Call Alg.~\ref{alg:penalty}: $(\beta_k, \vx^{k+1},\vp^{k+1})=\mathrm{PenMM}(\frac{\widehat\vareps_2}{2},\bar\beta_{s,t},\sigma,\rho,\vx^k,\bar\vp^s,L_{\min}, \gamma_1,\gamma_2)$;{\color{blue}\algorithmiccomment{obtain $\frac{\widehat\vareps_2}{2}$-KKT of \eqref{ksub}}}\;
			\If{$\|\vx^{k+1}-\vx^k\|\le \frac{\vareps}{4\rho}$}{
				output $(\vx^{k+1},\vp^{k+1})$ and stop
			}
			Increase $t\gets t+1$, set $\bar\beta_{s,t}=\beta_k$, and increase $k\gets k+1$;\;
			\If{$k=K_s-1$}{
				%Let $g_k$ be the function defined as $g_k(\vx)=g(\vx)+\rho\|\vx-\vx^k\|^2$;\; 
				Call Alg.~\ref{alg:ialm-stcvx}:  $(\beta_k,\vx^{k+1},\vp^{k+1})=\mathrm{iALM}\left( \frac{\widehat\vareps_1}{2},\beta_0,\sigma,\rho,\vx^k, L_{\min}, \gamma_1,\gamma_2\right)$;\; 
				\If{$\|\vx^{k+1}-\vx^k\|\le \frac{\vareps}{4\rho}$}{
					output $(\vx^{k+1},\vp^{k+1})$ and stop
				}
				Let $t=0$, $k=K_{s}$, $N_{s+1} = \lceil\gamma^{s} N_1\rceil$, and $K_{s+1}=K_{s}+N_{s+1}$;\;
				Increase $s \gets s + 1 $, set $\bar\beta_{s,0}=\beta_k$ and $\bar\vp^s=\vp^{k+1}$.{\color{blue}\algorithmiccomment{update penalty and multipliers from iALM}} 
				 % added t = t + 1.
			}
		}
	}
\end{algorithm}

\subsection{Complexity analysis of the hybrid method}

The complexity results for the iALM and PenMM both depend on the norm of the dual solution and the bound on the Lipschitz constant of the AL function. We first provide a uniform bound of dual solutions of all PP subproblems and on each estimated multiplier vector $\bar\vp^s$. 
The lemma below is directly from \cite[Lemma~3]{lin2019inexact-pp}.
%Let $(\vx_k^*,\vy_k^*,\vz_k^*)$ be the KKT point (which exists due to Slater's condition) for the $k$-subproblem.\\ 
%By (ref: 'thm 1 paper', lemma 1), 

\begin{lemma}[uniform bound on dual solutions]\label{lem:u-bound-dual}
	Under Assumptions~\ref{assump:composite} through \ref{assump:uniform-bound}, we have the following bound on the dual solution $(\vy_k^*, \vz_k^*)$ of \eqref{ksub}:  %\comm{double check the bound}
	\begin{align*}
	\textstyle \Vert \vy_k^* \Vert &\leq M_{y} := \frac{\sqrt{\lambda_{\max}(\vA\vA^\top )}}{\lambda_{\min}(\vA\vA^\top )} (B_0+2\rho D+M)\left(1+\frac{D}{\dist(\vx_{\mathrm{feas}},\, \partial \dom(h))}+\frac{D\max_{i\ge1} B_i}{\min_{i\ge1} | f_i(\vx_{\mathrm{feas}}) |}\right) \\
	\Vert \vz_k^* \Vert &\leq M_{z} := \frac{D(B_0+\rho D+M)}{\min_{i\ge1} | f_i(\vx_{\mathrm{feas}}) |},
	\end{align*}
	where $\{B_i\}_{i=0}^m$ are the bounds of $|f_i|$'s, $\partial \dom(h)$ denotes the boundary of $\dom(h)$, and $\lambda_{\min}(\vA\vA^\top )$ and $\lambda_{\max}(\vA\vA^\top )$ respectively take the minimum and maximum eigenvalues of $\vA\vA^\top$.
\end{lemma}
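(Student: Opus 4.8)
The plan is to reconstruct the dual-bound argument of \cite[Lemma~3]{lin2019inexact-pp}, adapted to the proximal subproblem \eqref{ksub}. Since \eqref{ksub} is convex (its objective $\bar g+h$ is $\rho$-strongly convex and its constraints are convex) and Slater's condition (Assumption~\ref{assump:slater}) holds, any optimal $\bar\vx^*$ admits a KKT multiplier $(\vy_k^*,\vz_k^*)$ satisfying \eqref{eq:kkt-system}. The essential point is that the resulting bounds must be \emph{uniform} across all stages, i.e.\ independent of the prox-center $\vx^k$; this is secured by replacing every occurrence of $\|\bar\vx^*-\vx^k\|$ with the diameter bound $\|\bar\vx^*-\vx^k\|\le D$ from \eqref{eq:def-D}, so that only the fixed problem constants $B_0,D,M,\{B_i\}$ and the fixed Slater point $\vx_{\mathrm{feas}}$ survive.

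First I would bound $\|\vz_k^*\|$. Taking the stationarity inclusion in \eqref{eq:kkt-system} and pairing it with $\vx_{\mathrm{feas}}-\bar\vx^*$, the term $\langle\vA^\top\vy_k^*,\vx_{\mathrm{feas}}-\bar\vx^*\rangle$ vanishes because $\vA\vx_{\mathrm{feas}}=\vb=\vA\bar\vx^*$. Convexity of each $f_i$ together with componentwise complementary slackness $z_i^*f_i(\bar\vx^*)=0$ yields $\sum_i z_i^*\langle\nabla f_i(\bar\vx^*),\vx_{\mathrm{feas}}-\bar\vx^*\rangle\le\sum_i z_i^*f_i(\vx_{\mathrm{feas}})\le-\min_{i}|f_i(\vx_{\mathrm{feas}})|\,\|\vz_k^*\|_1$, using $f_i(\vx_{\mathrm{feas}})<0$. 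The remaining piece $\langle\nabla\bar g(\bar\vx^*)+\tilde\nabla h,\vx_{\mathrm{feas}}-\bar\vx^*\rangle$, for a subgradient $\tilde\nabla h\in\partial h(\bar\vx^*)$, is controlled by writing $\tilde\nabla h=\vn+\vc$ with $\vn\in\cN_{\dom(h)}(\bar\vx^*)$ and $\|\vc\|\le M$ (Assumption~\ref{assump:uniform-bound}): then $\langle\vn,\vx_{\mathrm{feas}}-\bar\vx^*\rangle\le0$ since $\vx_{\mathrm{feas}}\in\dom(h)$, and the convexity/function-value bounds $|f_0|\le B_0$, $\|\bar\vx^*-\vx^k\|\le D$, $\|\vx_{\mathrm{feas}}-\bar\vx^*\|\le D$ bound what is left by a multiple of $B_0+\rho D+M$. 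Rearranging gives $\|\vz_k^*\|\le\|\vz_k^*\|_1\le M_z$.

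Next I would bound $\|\vy_k^*\|$. Solving the stationarity inclusion for $\vA^\top\vy_k^*$ and using that $\vA$ has full row rank (Assumption~\ref{assump:uniform-bound}), so that $\vy_k^*=(\vA\vA^\top)^{-1}\vA\,\vw$ with $\vw$ the negative of all non-$\vy$ terms, produces the spectral prefactor $\sqrt{\lambda_{\max}(\vA\vA^\top)}/\lambda_{\min}(\vA\vA^\top)$. It then remains to bound $\|\vw\|$: the term $\sum_i z_i^*\nabla f_i(\bar\vx^*)$ is at most $\|\vz_k^*\|_1\max_{i}B_i\le M_z\max_i B_i$ by \eqref{P3-eqn-33} and the just-established $\vz$-bound, contributing the factor $D\max_i B_i/\min_i|f_i(\vx_{\mathrm{feas}})|$, while $\nabla\bar g(\bar\vx^*)$ and the $\cB_M$-part of $\partial h$ contribute the $B_0+2\rho D+M$ factor.

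The main obstacle is the normal-cone component $\vn\in\cN_{\dom(h)}(\bar\vx^*)$ sitting inside $\vw$: when $\bar\vx^*$ lies on $\partial\dom(h)$ this component is a priori unbounded, so it cannot simply be discarded as it was in the $\vz$-bound. I would control it by an interior-ball argument: since $\vx_{\mathrm{feas}}+r\,\vn/\|\vn\|\in\dom(h)$ for $r=\dist(\vx_{\mathrm{feas}},\partial\dom(h))$, the normal-cone inequality $\langle\vn,\vx_{\mathrm{feas}}+r\vn/\|\vn\|-\bar\vx^*\rangle\le0$ forces $r\|\vn\|\le\langle\vn,\bar\vx^*-\vx_{\mathrm{feas}}\rangle$, whose right side is bounded once the remaining (already estimated) terms of the stationarity identity are substituted in for $\vn$; this is exactly the source of the factor $1+D/\dist(\vx_{\mathrm{feas}},\partial\dom(h))$ in $M_y$. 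Assembling the three estimates yields the stated $M_y$ and $M_z$, with the precise numerical constants following the bookkeeping in \cite[Lemma~3]{lin2019inexact-pp}.
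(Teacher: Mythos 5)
The paper does not actually prove this lemma: it imports it verbatim from \cite[Lemma~3]{lin2019inexact-pp} (``The lemma below is directly from \cite[Lemma~3]{lin2019inexact-pp}''), so there is no in-paper argument to compare against line by line. Your reconstruction is the standard Slater-type dual bound and its architecture is consistent with every factor in the stated constants: complementary slackness plus convexity of the $f_i$ and $\vA\vx_{\mathrm{feas}}=\vb$ give $\min_i|f_i(\vx_{\mathrm{feas}})|\,\|\vz_k^*\|_1\le\langle\nabla\bar g(\bar\vx^*)+\tilde\nabla h,\vx_{\mathrm{feas}}-\bar\vx^*\rangle$ (hence $M_z$); full row rank of $\vA$ yields $\vy_k^*=-(\vA\vA^\top)^{-1}\vA\vw$ and the prefactor $\sqrt{\lambda_{\max}(\vA\vA^\top)}/\lambda_{\min}(\vA\vA^\top)$; and your interior-ball estimate $r\|\vn\|\le\langle\vn,\bar\vx^*-\vx_{\mathrm{feas}}\rangle$ is exactly what generates the $1+D/\dist(\vx_{\mathrm{feas}},\partial\dom(h))$ factor --- this last step is the one genuinely non-obvious ingredient and you have it right. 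One point you should make explicit: the lemma declares $B_0$ to be a bound on $|f_0|$ (Assumption~\ref{assump:uniform-bound}), but the factor $B_0+2\rho D+M$ in $M_y$ arises from bounding $\|\nabla\bar g(\bar\vx^*)\|+M$, which needs a bound on the \emph{gradient} $\|\nabla g\|$ over $\dom(h)$, not on the function value; such a bound exists (Lipschitz gradient on a bounded domain), but it is not literally $B_0$ as defined here, and in the $z$-bound the cleanest route is instead $\langle\nabla\bar g+\tilde\nabla h,\vx_{\mathrm{feas}}-\bar\vx^*\rangle\le f_0(\vx_{\mathrm{feas}})-f_0(\bar\vx^*)+\rho\|\vx_{\mathrm{feas}}-\vx^k\|^2\le 2B_0+\rho D^2$, which uses only the function-value bound but produces a slightly different constant than the stated $D(B_0+\rho D+M)$. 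This is a bookkeeping discrepancy inherited from the citation rather than a gap in your argument, but if you write the proof out in full you must decide which quantity $B_0$ bounds and keep it consistent between the two estimates.
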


Recalling $\vp=(\vy,\vz)$, we have from the above lemma that 
\begin{equation}\label{eq:bd-opt-pk}
\Vert \vp_k^* \Vert \leq M_p := \sqrt{M_y^2+M_z^2}.
\end{equation} 
With the above bound, we are able to bound each estimated multiplier vector $\bar\vp^s$ from the HiAPeM algorithm.

\begin{lemma}\label{lem:bd-est-p}
	Let $\{\bar\vp^s\}$ be from Algorithm~\ref{alg:ialm-noncvx}. Then $\frac{\|\bar\vp^s\|}{\bar\beta_{s,0}}\le \frac{\widehat\vareps_1}{2}$ and $\|\bar\vp^s\|\le M_{\widehat\vareps_1}$ for each $s\ge0$, where
	\begin{equation}\label{eq:def-M-eps}
	M_{\widehat\vareps_1} =2M_p+\sqrt{2\widehat\vareps_1\sigma \cdot \max\{\textstyle \frac{5}{4} M_p^2,\, M_p,\, \frac{\widehat\vareps_1}{4}\} }.
	\end{equation}
\end{lemma}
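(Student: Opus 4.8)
The plan is to recognize that every estimated multiplier $\bar\vp^s$ produced by Algorithm~\ref{alg:ialm-noncvx} is precisely the dual output of an iALM call made with tolerance $\tfrac{\widehat\vareps_1}{2}$, and that $\bar\beta_{s,0}$ is the accompanying penalty parameter; this way both claims reduce to the guarantees already proved for Algorithm~\ref{alg:ialm-stcvx}. Concretely, when that iALM call stops at an inner index $k=K-1$, it returns $\bar\vp^s=\vp^K$ and $\bar\beta_{s,0}=\beta_{K-1}$, and its stopping rule \eqref{eq:check-stop} forces $\tfrac{\|\vp^{K-1}\|+\|\vp^K\|}{\beta_{K-1}}\le\tfrac{\widehat\vareps_1}{2}$. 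Discarding the nonnegative term $\|\vp^{K-1}\|$ gives $\tfrac{\|\bar\vp^s\|}{\bar\beta_{s,0}}=\tfrac{\|\vp^K\|}{\beta_{K-1}}\le\tfrac{\widehat\vareps_1}{2}$, which is the first assertion.

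For the norm bound I would invoke the dual estimate \eqref{p-bound-kkt} for this run, with $\vareps=\tfrac{\widehat\vareps_1}{2}$, namely $\|\vp^K\|\le 2\|\bar\vp^*\|+\sqrt{2\sum_{t=0}^{K-1}\beta_t\vareps_t^2/\rho}$, where $\bar\vp^*$ is a KKT multiplier of the subproblem \eqref{ksub}. Since Lemma~\ref{lem:u-bound-dual} and \eqref{eq:bd-opt-pk} bound every such multiplier by $M_p$, the leading term is at most $2M_p$, matching the $2M_p$ in $M_{\widehat\vareps_1}$. It then remains to control the geometric sum. Using $\beta_t=\beta_0\sigma^t$ and the constant choice $\vareps_t=\bar\vareps$ with $\bar\vareps^2=\tfrac{\sigma-1}{\sigma+1}\tfrac{\widehat\vareps_1^2}{16}\min\{1,\rho\}$ (this is the iALM setting with tolerance $\tfrac{\widehat\vareps_1}{2}$), I would evaluate $\sum_{t=0}^{K-1}\beta_t\vareps_t^2/\rho=\tfrac{\beta_0\widehat\vareps_1^2\min\{1,\rho\}}{16\rho(\sigma+1)}(\sigma^K-1)$ and simplify with $\min\{1,\rho\}/\rho\le 1$ and $\sigma^K-1\le\sigma^K$ to obtain $2\sum_{t=0}^{K-1}\beta_t\vareps_t^2/\rho\le\tfrac{\beta_0\widehat\vareps_1^2}{8(\sigma+1)}\sigma^K$.

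The key remaining step is to replace $\sigma^K$ by a quantity depending only on $M_p$ and $\widehat\vareps_1$. For this I would use Theorem~\ref{thm:iter-eps-kkt}, which guarantees the iALM stops within $K\le\lceil\log_\sigma\widehat C\rceil+1$ iterations with $\widehat C=\widehat C_{\widehat\vareps_1/2}$; hence $\sigma^K\le\sigma^2\widehat C$, and bounding $\|\bar\vp^*\|\le M_p$ inside $\widehat C$ (see \eqref{eq:K-max-iter-kkt}) gives $\widehat C\le\tfrac{1}{\beta_0}\max\{\tfrac{20M_p^2}{\widehat\vareps_1},\tfrac{16M_p}{\widehat\vareps_1},4\}$. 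Substituting this, factoring $\tfrac14$ out of the maximum, and using $\tfrac{\sigma^2}{\sigma+1}\le\sigma$ collapses $2\sum_{t=0}^{K-1}\beta_t\vareps_t^2/\rho$ to exactly $2\widehat\vareps_1\sigma\max\{\tfrac54 M_p^2,M_p,\tfrac{\widehat\vareps_1}{4}\}$, so that $\|\bar\vp^s\|=\|\vp^K\|\le 2M_p+\sqrt{2\widehat\vareps_1\sigma\max\{\tfrac54 M_p^2,M_p,\tfrac{\widehat\vareps_1}{4}\}}=M_{\widehat\vareps_1}$. The only delicate part is the bookkeeping of these scalars; conceptually nothing new is required beyond the iALM guarantees, so I expect the main effort to be verifying that the elementary simplifications ($\min\{1,\rho\}/\rho\le1$, $\sigma^K\le\sigma^2\widehat C$, $\tfrac{\sigma^2}{\sigma+1}\le\sigma$, and pulling $\tfrac14$ out of the maximum) combine to reproduce the stated $M_{\widehat\vareps_1}$ without slack.
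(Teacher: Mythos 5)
Your proposal is correct and follows essentially the same route as the paper's proof: the first claim comes from the iALM stopping rule \eqref{eq:check-stop} after dropping the $\|\vp^{K-1}\|$ term, and the second from \eqref{p-bound-kkt} combined with the iteration bound of Theorem~\ref{thm:iter-eps-kkt} (instantiated at tolerance $\widehat\vareps_1/2$) and $\|\vp_k^*\|\le M_p$. The only cosmetic difference is in the bookkeeping (you bound $\sigma^K\le\sigma^2\widehat C$ and later use $\sigma^2/(\sigma+1)\le\sigma$, whereas the paper first absorbs $\sigma+1$ into $\sigma^{K-1}$ and then uses $\sigma^{K-1}\le\sigma\widehat C$), which yields the same constant $M_{\widehat\vareps_1}$.
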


\begin{proof} 
We immediately have $\frac{\|\bar\vp^s\|}{\bar\beta_{s,0}}\le \frac{\widehat\vareps_1}{2}$ since $(\bar\beta_{s,0}, \bar\vp^s)$ is output by iALM with tolerance $\frac{\widehat\vareps_1}{2}$. Hence, we only need to show $\|\bar\vp^s\|\le M_{\widehat\vareps_1}$.

From \eqref{p-bound-kkt}, it follows that the output dual solution $\vp^{k+1}$ by iALM on the $k$-th subproblem \eqref{ksub} satisfies 
$\left\Vert \vp^{k+1} \right\Vert \leq 2 \left\Vert \vp_k^* \right\Vert + \sqrt{2\sum_{t=0}^{K-1}\beta_t \frac{\vareps_t^2}{\rho}}$, where $K$ is the total iteration number within the subroutine iALM. Notice that since we call iALM to return an $\frac{\widehat\vareps_1} 2$-KKT solution of \eqref{ksub}, applying the results in Theorem~\ref{thm:iter-eps-kkt}, we have 
$$\beta_t=\beta_0\sigma^t,\, \vareps_t=\sqrt\frac{\sigma-1}{\sigma+1}\frac{\widehat\vareps_1}4\min\{1,\sqrt{\rho}\},\, \text{ and }
K= \left\lceil\log_\sigma 
\max\left\{\frac{20\|\vp_k^*\|^2}{\beta_0\widehat\vareps_1},\, \frac{16\|\vp_k^*\|}{\beta_0\widehat\vareps_1},\, \frac{4}{\beta_0}\right\}\right\rceil + 1.$$ Hence, 
$$\sum_{t=0}^{K-1}\beta_t \frac{\vareps_t^2}{\rho}\le \sum_{t=0}^{K-1}\beta_t \frac{(\sigma-1)(\widehat\vareps_1)^2}{16(\sigma+1)}=\frac{(\sigma-1)(\widehat\vareps_1)^2\beta_0}{16(\sigma+1)}\frac{\sigma^K-1}{\sigma-1}\le \frac{\beta_0(\widehat\vareps_1)^2\sigma^{K-1}}{16}\le \widehat\vareps_1\sigma\max\big\{ \textstyle
\frac{5}{4}\|\vp_k^*\|^2,\, \|\vp_k^*\|, \frac{\widehat\vareps_1}{4}\big\}.$$
Since $\|\vp_k^*\|\le M_p$ from \eqref{eq:bd-opt-pk}, it holds that $\Vert \vp^{k+1} \Vert \le M_{\widehat\vareps_1}$ if the $k$-th subproblem \eqref{ksub} is solved by iALM, i.e., $\Vert \bar\vp^{s} \Vert \le M_{\widehat\vareps_1}$ for each $s\ge0$. This completes the proof.
\end{proof}

The next lemma gives the progress from the iALM steps.

\begin{lemma}[progress from iALM step]\label{lem:iter-by-ialm}
	Let $(\beta_k,\vx^{k+1},\vp^{k+1})$ be generated from Algorithm~\ref{alg:ialm-noncvx}. Then for each $0\le k < K_0-1$ and each $k=K_s-1$ for any $s\ge0$ (i.e., the output is by $\mathrm{iALM}$), it holds
	\begin{equation}\label{eq:obj-dec-from-ialm}
	f_0(\vx^{k+1})-f_0(\vx^k)+\frac{3\rho}{2}\|\vx^{k+1}-\vx^k\|^2 \le \big(\textstyle\frac{D}{2} + M_{\widehat\vareps_1}\big) \widehat\vareps_1,
	\end{equation}
	where $M_{\widehat\vareps_1}$ is defined in \eqref{eq:def-M-eps}, and $D$ is the diameter of $\dom(h)$.
\end{lemma}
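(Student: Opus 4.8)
The plan is to transcribe the defining inequalities of the $\frac{\widehat\vareps_1}{2}$-KKT point returned by the iALM call and pair them with the weak convexity of $f_0$ and the uniform dual bound of Lemma~\ref{lem:bd-est-p}. Since $(\vx^{k+1},\vp^{k+1})$ is a $\frac{\widehat\vareps_1}{2}$-KKT point of \eqref{ksub} (the $\vareps$-KKT conditions of Definition~\ref{def:eps-kkt} for the subproblem with prox-center $\vx^k$), there are $\tilde\nabla f_0(\vx^{k+1})\in\partial f_0(\vx^{k+1})$ and a residual $\vr$ with $\|\vr\|\le\frac{\widehat\vareps_1}{2}$ obeying
\begin{equation*}
\vr=\tilde\nabla f_0(\vx^{k+1})+2\rho(\vx^{k+1}-\vx^k)+\vA^\top\vy^{k+1}+\textstyle\sum_{i=1}^m z_i^{k+1}\nabla f_i(\vx^{k+1}),
\end{equation*}
together with $\|\vA\vx^{k+1}-\vb\|\le\frac{\widehat\vareps_1}{2}$, $\|[\vf(\vx^{k+1})]_+\|\le\frac{\widehat\vareps_1}{2}$, and $\sum_i|z_i^{k+1}f_i(\vx^{k+1})|\le\frac{\widehat\vareps_1}{2}$. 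First I would take the inner product of this identity with $\vx^{k+1}-\vx^k$. The left side is at most $\|\vr\|\,\|\vx^{k+1}-\vx^k\|\le\frac{\widehat\vareps_1}{2}\,D$ by Cauchy--Schwarz and the diameter bound \eqref{eq:def-D}, which supplies the $\frac{D}{2}\widehat\vareps_1$ summand in the claim.

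Next I would lower-bound the expanded right side term by term. For the objective, Assumption~\ref{assump:wkcvx} makes $f_0+\frac{\rho}{2}\|\cdot-\vx^k\|^2$ convex, so $\tilde\nabla f_0(\vx^{k+1})+\rho(\vx^{k+1}-\vx^k)$ is one of its subgradients at $\vx^{k+1}$; evaluating the subgradient inequality at $\vx^k$ gives $\langle\tilde\nabla f_0(\vx^{k+1}),\vx^{k+1}-\vx^k\rangle\ge f_0(\vx^{k+1})-f_0(\vx^k)-\frac{\rho}{2}\|\vx^{k+1}-\vx^k\|^2$, and adding the contribution $2\rho\|\vx^{k+1}-\vx^k\|^2$ of the prox part of the residual yields exactly the $f_0(\vx^{k+1})-f_0(\vx^k)+\frac{3\rho}{2}\|\vx^{k+1}-\vx^k\|^2$ appearing in the statement. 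For the constraint gradients, convexity of each $f_i$ and $z_i^{k+1}\ge 0$ give $\sum_i z_i^{k+1}\langle\nabla f_i(\vx^{k+1}),\vx^{k+1}-\vx^k\rangle\ge\sum_i z_i^{k+1}\big(f_i(\vx^{k+1})-f_i(\vx^k)\big)$.

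After these reductions the claim reduces to bounding the cross-terms $\langle\vy^{k+1},\vA(\vx^{k+1}-\vx^k)\rangle+\sum_i z_i^{k+1}\big(f_i(\vx^{k+1})-f_i(\vx^k)\big)$ below by $-M_{\widehat\vareps_1}\widehat\vareps_1$. I would split them into four signed pieces and control each: near-feasibility of $\vx^{k+1}$ and of the prox-center $\vx^k$ give $|\langle\vy^{k+1},\vA\vx^{k+1}-\vb\rangle|\le\|\vy^{k+1}\|\frac{\widehat\vareps_1}{2}$ and $|\langle\vy^{k+1},\vA\vx^k-\vb\rangle|\le\|\vy^{k+1}\|\frac{\widehat\vareps_1}{2}$; near-complementarity at $\vx^{k+1}$ gives $\sum_i z_i^{k+1}f_i(\vx^{k+1})\ge-\frac{\widehat\vareps_1}{2}$; and near-feasibility of $\vx^k$ together with $z_i^{k+1}\ge0$ and $f_i(\vx^k)\le[f_i(\vx^k)]_+$ gives $\sum_i z_i^{k+1}f_i(\vx^k)\le\|\vz^{k+1}\|\frac{\widehat\vareps_1}{2}$. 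Each piece is therefore a $\widehat\vareps_1$-multiple of a dual norm or of $1$, and I would finish by invoking the uniform dual bound $\|\vp^{k+1}\|\le M_{\widehat\vareps_1}$ of Lemma~\ref{lem:bd-est-p} to replace $\|\vy^{k+1}\|$ and $\|\vz^{k+1}\|$, so that the aggregate is of order $M_{\widehat\vareps_1}\widehat\vareps_1$; rearranging then gives \eqref{eq:obj-dec-from-ialm}.

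I expect the main obstacle to be the bookkeeping in this last step: one must group the four cross-terms, and use the tolerance split $\frac{\widehat\vareps_1}{2}$, so that the accumulated constant collapses to $M_{\widehat\vareps_1}$, which is where the particular form of $M_{\widehat\vareps_1}$ in \eqref{eq:def-M-eps} is exploited rather than a cruder multiple of $M_p$. A secondary point requiring care is the near-feasibility of the prox-center $\vx^k$: for $k\ge1$ this is available because $\vx^k$ is the output of a previous iALM or PenMM call and the hard constraints are common to all the subproblems \eqref{ksub}, so that $\|\vA\vx^k-\vb\|\le\frac{\widehat\vareps_1}{2}$ and $\|[\vf(\vx^k)]_+\|\le\frac{\widehat\vareps_1}{2}$; the initial index relies on the same feasibility guarantee holding at the starting iterate.
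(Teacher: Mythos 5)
Your proposal follows essentially the same route as the paper's proof: pair the $\frac{\widehat\vareps_1}{2}$-dual-feasibility residual with $\vx^{k+1}-\vx^k$ and the diameter bound to get the $\frac{D}{2}\widehat\vareps_1$ term, use convexity of $f_0+\frac{\rho}{2}\|\cdot-\vx^k\|^2$ and of the $f_i$'s to produce the left-hand side of \eqref{eq:obj-dec-from-ialm}, and then absorb the cross-terms $\langle\vy^{k+1},\vA\vx^{k}-\vb\rangle-\langle\vy^{k+1},\vA\vx^{k+1}-\vb\rangle+\sum_i z_i^{k+1}(f_i(\vx^k)-f_i(\vx^{k+1}))$ via the near-feasibility of both iterates and the dual bound $\|\vp^{k+1}\|\le M_{\widehat\vareps_1}$ from Lemma~\ref{lem:bd-est-p}. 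The two loose ends you flag — the exact collapse of the accumulated cross-term constants to $M_{\widehat\vareps_1}\widehat\vareps_1$, and the near-feasibility of the prox-center at $k=0$ — are present in the paper's own argument as well, so they do not distinguish your proof from the published one.
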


\begin{proof} 
Notice that the output $(\beta_k,\vx^{k+1},\vp^{k+1})$ by $\mathrm{iALM}$ satisfies the $\frac{\widehat\vareps_1}{2}$-KKT conditions of \eqref{ksub}. Hence, it holds that
\begin{subequations}
\begin{align}
\textstyle\dist\left(\vzero, \partial_\vx\cL_0(\vx^{k+1},\vp^{k+1}) +2\rho(\vx^{k+1}-\vx^k)\right) \le \frac{\widehat\vareps_1}{2}, \label{eq:all-eps-pfeas}\\[0.1cm]
\textstyle\sqrt{\|\vA\vx^{k+1}-\vb\|^2+\|[\vf(\vx^{k+1})]_+\|^2} \le \frac{\widehat\vareps_1}{2}. \label{eq:all-eps-dfeas}%\\[0.1cm]
%\textstyle\sum_{i=1}^m|z_i^{k+1}f_i(\vx^{k+1})|\le \frac{\vareps}{2}.\label{eq:all-eps-compl}
\end{align}
\end{subequations}
We have from \eqref{eq:all-eps-pfeas} and also the boundedness of $\dom(h)$ that for some subgradient $\tilde\nabla f_0(\vx^{k+1})$,
$$\left\langle\textstyle\vx^{k+1}-\vx^k, \tilde\nabla f_0(\vx^{k+1})+2\rho(\vx^{k+1}-\vx^k)+\vA^\top\vy^{k+1}+\sum_{i=1}^mz_i^{k+1}\nabla f_i(\vx^{k+1})\right\rangle \le \frac{\widehat\vareps_1 D}{2},$$
which together with the convexity of $f_0+\frac{\rho}{2}\|\cdot-\vx^k\|^2$ and $\{f_i\}_{i\ge 1}$ implies
\begin{align}\label{eq:ineq-f0-ialm-pt}
&~\textstyle f_0(\vx^{k+1})-f_0(\vx^k)+\frac{3\rho}{2}\|\vx^{k+1}-\vx^k\|^2 + \langle \vA\vx^{k+1}-\vb,\vy^{k+1}\rangle +\sum_{i=1}^m z_i^{k+1}f_i(\vx^{k+1})\nonumber\\[0.1cm]
\le &~ \textstyle \langle \vA\vx^{k}-\vb,\vy^{k+1}\rangle +\sum_{i=1}^m z_i^{k+1}f_i(\vx^{k}) + \frac{\widehat\vareps_1 D}{2}.
\end{align}
Since $\widehat\vareps_2\le\widehat\vareps_1$, $\vx^k$ must be $\frac{\widehat\vareps_1}{2}$-primal feasible, whichever subroutine (iALM or PenMM) is used to generate it. Hence, by Lemma~\ref{lem:bd-est-p} and the Young's inequality, we have
$$\textstyle \langle \vA\vx^{k}-\vb,\vy^{k+1}\rangle +\sum_{i=1}^m z_i^{k+1}f_i(\vx^{k}) - \left(\langle \vA\vx^{k+1}-\vb,\vy^{k+1}\rangle +\sum_{i=1}^m z_i^{k+1}f_i(\vx^{k+1})\right) \le \widehat\vareps_1 M_{\widehat\vareps_1}.$$
Therefore, adding the above inequality into \eqref{eq:ineq-f0-ialm-pt} gives the desired result.
\end{proof}

%The proof of the next lemma is rather technical but fundamental. We put it in the appendix.

The next two lemmas give the progress from the PenMM steps.

\begin{lemma}\label{lem:sum-aug-terms}
	For each $s\ge 1$, it holds that %and each $0\le t \le N_s-2$, denote $\vx^{s,t}=\vx^k$ for each $k = t+K_{s-1}$, then  
	\begin{align}\label{eq:sum-aug-term}
	&~\textstyle \sum_{k=K_{s-1}}^{K_s-2}\left(\frac{\beta_{k}}{2}\|\vA\vx^{k}-\vb\|^2+\Psi_{\beta_{k}}(\vx^{k},\bar\vz^s)\right)-\sum_{k=K_{s-1}}^{K_s-2}\left(\frac{\beta_{k}}{2}\|\vA\vx^{k+1}-\vb\|^2+\Psi_{\beta_{k}}(\vx^{k+1},\bar\vz^s)\right)\nonumber\\
	\le &~ \frac{\widehat\vareps_1 M_{\widehat\vareps_1}}4 + \frac{\widehat\vareps_1}{2} + \frac{(\widehat\vareps_1)^2\sigma \beta(\widehat\vareps_1,\widehat\vareps_2)}{4}, %\frac{(\widehat\vareps_1)^2 \beta(\widehat\vareps_1,\widehat\vareps_2)}4 + \frac{\widehat\vareps_1 M_{\widehat\vareps_1}}2 + \frac{(\widehat\vareps_1)^2\sigma^3\beta(\widehat\vareps_1,\widehat\vareps_2)}{8(\sigma-1)}.
	\end{align}
	where $M_{\widehat\vareps_1}$ is defined in \eqref{eq:def-M-eps}, $\Psi_\beta(\vx,\vz)=\frac{1}{2\beta}\left(\|[\vz+\beta\vf(\vx)]_+\|^2-\|\vz\|^2\right)$, and
	\begin{equation}\label{eq:def-beta-eps}
	\textstyle \beta(\widehat\vareps_1,\widehat\vareps_2) = \max\left\{\frac{24M_p^2 + 16 M_{\widehat\vareps_1}^2}{\widehat\vareps_2},\, \frac{8(M_p+M_{\widehat\vareps_1})}{\widehat\vareps_2},\, 8\right\}.
	\end{equation}
\end{lemma}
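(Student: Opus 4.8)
The plan is to \emph{telescope} the stated difference of sums and then control the boundary and ``penalty-increment'' pieces separately. Writing $P_\beta(\vx):=\tfrac{\beta}{2}\|\vA\vx-\vb\|^2+\Psi_\beta(\vx,\bar\vz^s)$ for the penalty part at stage $s$, a reindexing gives
\begin{align*}
\sum_{k=K_{s-1}}^{K_s-2}\big(P_{\beta_k}(\vx^k)-P_{\beta_k}(\vx^{k+1})\big)=P_{\beta_{K_{s-1}}}(\vx^{K_{s-1}})-P_{\beta_{K_s-2}}(\vx^{K_s-1})+\sum_{k=K_{s-1}+1}^{K_s-2}\big(P_{\beta_k}(\vx^k)-P_{\beta_{k-1}}(\vx^k)\big).
\end{align*}
Here $\vx^{K_{s-1}}$ is the $\tfrac{\widehat\vareps_1}{2}$-KKT iALM iterate that opens the stage while all other iterates are $\tfrac{\widehat\vareps_2}{2}$-KKT PenMM outputs. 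Throughout I use $\beta_k\le\sigma\beta(\widehat\vareps_1,\widehat\vareps_2)$: Theorems~\ref{thm:bd-betak}--\ref{thm:complexity-pen} bound each PenMM output penalty by $\sigma\bar\beta$, and $\|\vp_k^*\|\le M_p$, $\|\bar\vp^s\|\le M_{\widehat\vareps_1}$ from Lemma~\ref{lem:bd-est-p} make $\bar\beta$ of \eqref{eq:cond-betak-pen} at most $\beta(\widehat\vareps_1,\widehat\vareps_2)$ of \eqref{eq:def-beta-eps}.

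The analytic engine is the monotonicity of $\Psi_\beta$ in $\beta$, made quantitative by
\begin{align*}
\frac{\partial}{\partial\beta}\Psi_\beta(\vx,\vz)=\frac{1}{2\beta^2}\big\|[\vz+\beta\vf(\vx)]_+-\vz\big\|^2\le\frac{\|\vz\|^2}{2\beta^2}+\frac{\|[\vf(\vx)]_+\|^2}{2},
\end{align*}
which I check componentwise ($f_i\ge0$ contributes $\tfrac12[f_i]_+^2$, $f_i<0$ contributes at most $\tfrac{z_i^2}{2\beta^2}$). Integrating on $[\beta_{k-1},\beta_k]$ bounds $\Psi_{\beta_k}(\vx^k)-\Psi_{\beta_{k-1}}(\vx^k)$ by $\tfrac{\|\bar\vz^s\|^2}{2}(\tfrac1{\beta_{k-1}}-\tfrac1{\beta_k})+\tfrac{\beta_k-\beta_{k-1}}{2}\|[\vf(\vx^k)]_+\|^2$; adding the $\tfrac{\beta_k-\beta_{k-1}}{2}\|\vA\vx^k-\vb\|^2$ part of $P_{\beta_k}-P_{\beta_{k-1}}$ merges the two residuals, and $\tfrac{\widehat\vareps_2}{2}$-feasibility of the PenMM iterates gives $\tfrac{\beta_k-\beta_{k-1}}{2}\cdot\tfrac{\widehat\vareps_2^2}{4}$. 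This telescopes in $\beta$ to at most $\tfrac{\widehat\vareps_2^2\sigma\beta(\widehat\vareps_1,\widehat\vareps_2)}{8}\le\tfrac{\widehat\vareps_1^2\sigma\beta(\widehat\vareps_1,\widehat\vareps_2)}{8}$ using $\widehat\vareps_2\le\widehat\vareps_1$.

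To match the three target terms I treat the boundaries next. Since $\bar\vz^s$ is the iALM multiplier \emph{at} $\vx^{K_{s-1}}$, the complementarity $\sum_i|\bar z_i^s f_i(\vx^{K_{s-1}})|\le\tfrac{\widehat\vareps_1}{2}$ lets me keep only violated constraints, so $\Psi_{\beta_{K_{s-1}}}(\vx^{K_{s-1}})\le\tfrac{\widehat\vareps_1}{2}+\tfrac{\beta_{K_{s-1}}}{2}\|[\vf(\vx^{K_{s-1}})]_+\|^2$ and hence $P_{\beta_{K_{s-1}}}(\vx^{K_{s-1}})\le\tfrac{\widehat\vareps_1}{2}+\tfrac{\sigma\beta(\widehat\vareps_1,\widehat\vareps_2)\widehat\vareps_1^2}{8}$ by the combined $\tfrac{\widehat\vareps_1}{2}$-feasibility. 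The decisive step is to add the closing term $-P_{\beta_{K_s-2}}(\vx^{K_s-1})\le\tfrac{\|\bar\vz^s\|^2}{2\beta_{K_s-2}}$ to the telescoped increments $\tfrac{\|\bar\vz^s\|^2}{2}(\tfrac1{\beta_{K_{s-1}}}-\tfrac1{\beta_{K_s-2}})$, so the $\tfrac1{\beta_{K_s-2}}$ cancels and leaves $\tfrac{\|\bar\vz^s\|^2}{2\beta_{K_{s-1}}}\le\tfrac{\|\bar\vp^s\|}{2}\cdot\tfrac{\|\bar\vp^s\|}{\bar\beta_{s,0}}\le\tfrac{M_{\widehat\vareps_1}\widehat\vareps_1}{4}$, using $\beta_{K_{s-1}}\ge\bar\beta_{s,0}$ and both bounds of Lemma~\ref{lem:bd-est-p}. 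Collecting $\tfrac{\widehat\vareps_1}{2}$, $\tfrac{M_{\widehat\vareps_1}\widehat\vareps_1}{4}$, and the two $\tfrac{\sigma\beta(\widehat\vareps_1,\widehat\vareps_2)\widehat\vareps_1^2}{8}$ contributions reproduces the claimed bound exactly.

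The main obstacle is precisely the penalty-increment middle sum: it is nonnegative because $\Psi_\beta$ grows with $\beta$, so it cannot be dropped, yet a naive estimate would scale with the number of PenMM steps or with the large penalty $\sigma\beta(\widehat\vareps_1,\widehat\vareps_2)=\Theta(\widehat\vareps_2^{-1})$. Everything hinges on the two cancellations above---pairing the $\tfrac1\beta$-telescoping with the closing $-\Psi$ term, and merging the feasibility residuals before telescoping in $\beta$---plus invoking the iALM complementarity to neutralize the otherwise $O(M_{\widehat\vareps_1}\widehat\vareps_1)$ contribution at the stage's opening iterate. Verifying the $\Psi_\beta$ derivative identity and handling the degenerate case of a stage with no PenMM step (empty middle sum, so only boundary terms or $0$) are the only routine leftovers.
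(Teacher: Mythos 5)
Your argument is correct and follows essentially the same route as the paper: summation by parts into an opening boundary term, a closing term, and penalty-increment terms; the increments telescoped in $\beta$ against the $\frac{\widehat\vareps_2}{2}$-feasibility of the PenMM iterates; and the opening term controlled by the $\frac{\widehat\vareps_1}{2}$-feasibility, the complementarity bound $\sum_i|\bar z_i^s f_i(\vx^{K_{s-1}})|\le\frac{\widehat\vareps_1}{2}$, and the two bounds of Lemma~\ref{lem:bd-est-p}, exactly as in the paper's \eqref{eq:sum-axb}--\eqref{eq:sum-psi-term2}. The only cosmetic differences are that you obtain the increment estimate from the (correct) identity $\frac{\partial}{\partial\beta}\Psi_\beta=\frac{1}{2\beta^2}\|[\vz+\beta\vf]_+-\vz\|^2$ rather than the paper's $I_+/I_-$ case split, and you absorb the $\frac{\|\bar\vz^s\|^2}{2\beta}$ contributions by cancelling against the retained closing term instead of within each summand --- both yield the identical final constants.
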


\begin{proof} 
Corresponding to the notation $\bar\beta_{s,t}$, we denote $\vx^{s,t}=\vx^k$ for each $k = t+K_{s-1}$. Then $(\bar\beta_{s,t+1},\vx^{s,t+1})$ is given by PenMM for each $0\le t \le N_s-2$, and the inequality in \eqref{eq:sum-aug-term} can be rewritten as 
\begin{align}\label{eq:sum-aug-term-equiv}
&~\sum_{t=0}^{N_s-2}\left(\frac{\bar\beta_{s,t+1}}{2}\|\vA\vx^{s,t}-\vb\|^2+\Psi_{\bar\beta_{s,t+1}}(\vx^{s,t},\bar\vz^s)\right)-\sum_{t=0}^{N_s-2}\left(\frac{\bar\beta_{s,t+1}}{2}\|\vA\vx^{s,t+1}-\vb\|^2+\Psi_{\bar\beta_{s,t+1}}(\vx^{s,t+1},\bar\vz^s)\right)\nonumber\\
\le &~ \frac{\widehat\vareps_1 M_{\widehat\vareps_1}}4 + \frac{\widehat\vareps_1}{2} + \frac{(\widehat\vareps_1)^2\sigma \beta(\widehat\vareps_1,\widehat\vareps_2)}{4}. %\frac{(\widehat\vareps_1)^2 \beta(\widehat\vareps_1,\widehat\vareps_2)}4 + \frac{\widehat\vareps_1 M_{\widehat\vareps_1}}2 + \frac{(\widehat\vareps_1)^2\sigma^3\beta(\widehat\vareps_1,\widehat\vareps_2)}{8(\sigma-1)}.
\end{align}
We apply \eqref{eq:cond-betak-pen} with $\vareps=\frac{\widehat\vareps_2}{2}$, $\bar\vp^*=\bar\vp_k^*$, $\bar\vp=\bar\vp^s$, and also use \eqref{eq:bd-opt-pk} and Lemma~\ref{lem:bd-est-p} to have 
$$\beta_0\le \bar\beta_{s,t} < \sigma \beta(\widehat\vareps_1,\widehat\vareps_2),\text{ for each }1\le t\le N_s-1.$$

Noticing $\|\vA\vx^k-\vb\|\le \frac{\widehat\vareps_1}{2},\forall\, k$, and $\bar\beta_{s,t+1}\ge\bar\beta_{s,t}, \forall\, t\le N_s-1$, 
we have:
\begin{align}\label{eq:sum-axb}
&~\sum_{t=0}^{N_s-2}\left(\frac{\bar\beta_{s,t+1}}{2}\|\vA\vx^{s,t}-\vb\|^2-\frac{\bar\beta_{s,t+1}}{2}\|\vA\vx^{s,t+1}-\vb\|^2\right)\cr
=&~\frac{\bar\beta_{s,1}}{2}\|\vA\vx^{s,0}-\vb\|^2 - \frac{\bar\beta_{s,N_s-1}}{2}\|\vA\vx^{s,N_s-1}-\vb\|^2+\sum_{t=1}^{N_s-2}\left(\frac{\bar\beta_{s,t+1}}{2}\|\vA\vx^{s,t}-\vb\|^2-\frac{\bar\beta_{s,t}}{2}\|\vA\vx^{s,t}-\vb\|^2\right)\cr
\le&~\frac{\bar\beta_{s,1}}{2}\frac{(\widehat\vareps_1)^2}{4} +\sum_{t=1}^{N_s-2}\frac{\bar\beta_{s,t+1}-\bar\beta_{s,t}}{2}\frac{(\widehat\vareps_1)^2}{4} < \frac{(\widehat\vareps_1)^2}{8}\sigma\beta(\widehat\vareps_1,\widehat\vareps_2).
\end{align}

In addition, since $\Psi_\beta(\vx,\vz)=\frac{\beta}{2}\big\|[\vf(\vx)+\frac\vz\beta]_+\big\|^2-\frac{\|\vz\|^2}{2\beta}$, it holds 
{\small 
\begin{align}\label{eq:sum-psi-term}
&~\sum_{t=0}^{N_s-2}\left(\Psi_{\bar\beta_{s,t+1}}(\vx^{s,t},\bar\vz^s)-\Psi_{\bar\beta_{s,t+1}}(\vx^{s,t+1},\bar\vz^s)\right)\cr
=&~\sum_{t=0}^{N_s-2}\left(\frac{\bar\beta_{s,t+1}}{2}\left\|\Big[\vf(\vx^{s,t})+\frac{\bar\vz^s}{\bar\beta_{s,t+1}}\Big]_+\right\|^2-\frac{\bar\beta_{s,t+1}}{2}\left\|\Big[\vf(\vx^{s,t+1})+\frac{\bar\vz^s}{\bar\beta_{s,t+1}}\Big]_+\right\|^2\right)\cr
\le&~\frac{\bar\beta_{s,1}}{2}\left\|\Big[\vf(\vx^{s,0})+\frac{\bar\vz^s}{\bar\beta_{s,1}}\Big]_+\right\|^2%-\frac{\bar\beta_{s,K_s-1}}{2}\left\|\Big[\vf(\vx^{s,K_s-1})+\frac{\bar\vz^s}{\bar\beta_{s,K_s-1}}\Big]_+\right\|^2\cr
+\sum_{t=1}^{N_s-2}\left(\textstyle \frac{\bar\beta_{s,t+1}}{2}\left\|\textstyle \Big[\vf(\vx^{s,t})+\frac{\bar\vz^s}{\bar\beta_{s,t+1}}\Big]_+\right\|^2-\frac{\bar\beta_{s,t}}{2}\left\|\Big[\vf(\vx^{s,t})+\frac{\bar\vz^s}{\bar\beta_{s,t}}\Big]_+\right\|^2\right). 
\end{align}
}
For a fixed $t$, %$t\in\cT$, we have $\bar\beta_{s,t+1}=\sigma\bar\beta_{s,t}$. 
denote $I_+$  and $I_-$ as the index sets:
$$I_+=\big\{i\in[m]: f_i(\vx^{s,t})\ge0, \big\},\quad I_-=\big\{i\in[m]: -\bar z_i^s/\beta_{s,t+1}\le f_i(\vx^{s,t}) < 0\big\}.$$ 
Then
\begin{align}\label{eq:sum-psi-term2}
&~\frac{\bar\beta_{s,t+1}}{2}\left\|\Big[\vf(\vx^{s,t})+\frac{\bar\vz^s}{\bar\beta_{s,t+1}}\Big]_+\right\|^2-\frac{\bar\beta_{s,t}}{2}\left\|\Big[\vf(\vx^{s,t})+\frac{\bar\vz^s}{\bar\beta_{s,t}}\Big]_+\right\|^2\cr
\le&~\sum_{i\in I_+\cup I_-}\left(\frac{\bar\beta_{s,t+1}-\bar\beta_{s,t}}{2}[f_i(\vx^{s,t})]^2+\Big(\frac{1}{2\bar\beta_{s,t+1}}-\frac{1}{2\bar\beta_{s,t}}\Big)(\bar z_i^s)^2\right)\cr
\le&~\frac{\bar\beta_{s,t+1}-\bar\beta_{s,t}}{2} \sum_{i\in I_+}[f_i(\vx^{s,t})]^2 + \sum_{i\in I_-}\left(\frac{\bar\beta_{s,t+1}-\bar\beta_{s,t}}{2}\frac{(\bar z_i^s)^2}{(\bar\beta_{s,t+1})^2}+\Big(\frac{1}{2\bar\beta_{s,t+1}}-\frac{1}{2\bar\beta_{s,t}}\Big)(\bar z_i^s)^2\right)\cr
\le&~\frac{\bar\beta_{s,t+1}-\bar\beta_{s,t}}{2}\frac{(\widehat\vareps_1)^2}{4},
\end{align}
where %the second inequality follows from $\sigma>1$ and $[f_i(\vx^{s,t})]^2\le (\bar z_i^s/\beta_{s,t+1})^2$ for each $i\in I_-$, and 
in the last inequality, we have used $\|[\vf(\vx^k)]_+\|\le \frac{\widehat\vareps_1}{2},\forall\, k$, and the fact $\frac{\bar\beta_{s,t+1}-\bar\beta_{s,t}}{(\bar\beta_{s,t+1})^2}+\frac{1}{\bar\beta_{s,t+1}}-\frac{1}{\bar\beta_{s,t}}\le 0$. %$2\sigma-1-\sigma^2\le 0$ and $\sum_{i\in I_+}[f_i(\vx^{s,t})]^2\le \big\|[\vf(\vx^{s,t})]_+\big\|^2\le \frac{(\widehat\vareps_1)^2}{4}$ by the $\frac{\widehat\vareps_1}{2}$-primal feasibility condition. 
Furthermore,
\begin{align*}
\frac{\bar\beta_{s,1}}{2}\left\|\Big[\vf(\vx^{s,0})+\frac{\bar\vz^s}{\bar\beta_{s,1}}\Big]_+\right\|^2\le &~ \frac{\bar\beta_{s,1}}{2}\left\|\big[\vf(\vx^{s,0})\big]_++\frac{\bar\vz^s}{\bar\beta_{s,1}}\right\|^2 \\
=&~\frac{\bar\beta_{s,1}}{2}\big\|[\vf(\vx^{s,0})]_+\big\|^2 + (\bar\vz^s)^\top \big[\vf(\vx^{s,0})\big]_+ +  \frac{\|\bar\vz^s\|^2}{2\bar\beta_{s,1}} \\
% \le\bar\beta_{s,1}\big\|[\vf(\vx^{s,0})]_+\big\|^2+\frac{\|\bar\vz^s\|^2}{\bar\beta_{s,1}}
\le &~ \frac{(\widehat\vareps_1)^2\bar\beta_{s,1}}8 + \frac{\widehat\vareps_1}{2} + \frac{\widehat\vareps_1 M_{\widehat\vareps_1}}4,
\end{align*}
where the the last inequality holds because $(\vx^{s,0}, \bar\vp^s)$ is the output of iALM with tolerance $\frac{\widehat\vareps_1}{2}$. 
Plugging the above inequality and also \eqref{eq:sum-psi-term2} into \eqref{eq:sum-psi-term} gives
\begin{align*}
\sum_{t=0}^{N_s-2}\left(\Psi_{\bar\beta_{s,t+1}}(\vx^{s,t},\bar\vz^s)-\Psi_{\bar\beta_{s,t+1}}(\vx^{s,t+1},\bar\vz^s)\right)%\le &~ \frac{(\widehat\vareps_1)^2\bar\beta_{s,1}}4 + \frac{\widehat\vareps_1 M_{\widehat\vareps_1}}2+\sum_{t\in\cT} \frac{(\sigma-1)(\widehat\vareps_1)^2\bar\beta_{s,t}}{8}\\
\le &~ %\frac{(\widehat\vareps_1)^2 \beta(\widehat\vareps_1,\widehat\vareps_2)}4 + 
\frac{\widehat\vareps_1 M_{\widehat\vareps_1}}4 + \frac{\widehat\vareps_1}{2} + \frac{(\widehat\vareps_1)^2\sigma \beta(\widehat\vareps_1,\widehat\vareps_2)}{8}.
\end{align*}
%where in the second inequality, we have used the same arguments as those in \eqref{eq:bd-sum-sig-beta}. 
Now adding the above inequality to \eqref{eq:sum-axb}, %and also noting $\frac{\bar\beta_{s,1}}{2}\|\vA\vx^{s,0}-\vb\|^2\le \frac{\vareps^2\beta_\vareps}{8}$, 
we obtain the desired result. 
\end{proof}

\begin{lemma}[progress from PenMM step]\label{lem:iter-by-pen}
	Let $(\beta_k,\vx^{k+1},\vp^{k+1})$ be generated from Algorithm~\ref{alg:ialm-noncvx}. Then for each $s\ge1$, it holds
	\begin{equation}\label{eq:obj-sum-dec-from-pen}
	\begin{aligned}
	&~\textstyle f_0(\vx^{K_s-1}) - f_0(\vx^{K_{s-1}}) + \sum_{k=K_{s-1}}^{K_s-2} \rho\|\vx^{k+1}-\vx^k\|^2 
	\le \frac{5\widehat\vareps_1 M_{\widehat\vareps_1}}4 + \frac{\widehat\vareps_1}{2} + \frac{(\widehat\vareps_1)^2\sigma \beta(\widehat\vareps_1,\widehat\vareps_2)}{4} + \frac{(\widehat\vareps_2)^2}{4}(N_s-1).
	\end{aligned}
	\end{equation}
\end{lemma}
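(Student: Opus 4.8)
The plan is to establish the per-stage descent inequality \eqref{eq:obj-sum-dec-from-pen} by summing a one-step descent estimate over the PenMM iterations $k=K_{s-1},\ldots,K_s-2$ within stage $s$, and then telescoping the augmented-Lagrangian terms using the already-proved Lemma~\ref{lem:sum-aug-terms}. The key observation is that each PenMM step produces an $\frac{\widehat\vareps_2}{2}$-KKT point of the subproblem \eqref{ksub}, so I can mimic the descent argument used for the iALM step in Lemma~\ref{lem:iter-by-ialm}, but now with the \emph{fixed} estimated multipliers $\bar\vp^s=(\bar\vy^s,\bar\vz^s)$ rather than updated ones.

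First I would write the $\frac{\widehat\vareps_2}{2}$-dual-feasibility condition for PenMM at iteration $k$: since $\partial_\vx\cL_{\beta_k}(\vx^{k+1},\bar\vy^s,\bar\vz^s)=\partial_\vx\cL_0(\vx^{k+1},\vy^{k+1},\vz^{k+1})$, I have
$$\dist\Big(\vzero,\ \partial_\vx\cL_0(\vx^{k+1},\vp^{k+1})+2\rho(\vx^{k+1}-\vx^k)\Big)\le \tfrac{\widehat\vareps_2}{2}.$$
Pairing a subgradient with $\vx^{k+1}-\vx^k$ and invoking the strong convexity of $f_0+\frac{\rho}{2}\|\cdot-\vx^k\|^2$ together with the convexity of each $f_i$, I obtain a one-step inequality of the form
$$f_0(\vx^{k+1})-f_0(\vx^k)+\tfrac{3\rho}{2}\|\vx^{k+1}-\vx^k\|^2 + \langle\vA\vx^{k+1}-\vb,\vy^{k+1}\rangle + \textstyle\sum_i z_i^{k+1}f_i(\vx^{k+1}) \le \langle\vA\vx^k-\vb,\vy^{k+1}\rangle + \textstyle\sum_i z_i^{k+1}f_i(\vx^k) + \tfrac{\widehat\vareps_2 D}{2},$$
paralleling \eqref{eq:ineq-f0-ialm-pt}. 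The crucial difference from the iALM case is that here the multipliers $\vp^{k+1}=(\vy^{k+1},\vz^{k+1})$ are generated from the \emph{fixed} center $\bar\vp^s$ via a single penalty update, so the constraint-coupling terms do not telescope trivially.

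Next I would rewrite the coupling terms $\langle\vA\vx-\vb,\vy^{k+1}\rangle + \sum_i z_i^{k+1}f_i(\vx)$ in terms of the augmented-Lagrangian penalty quantities. Using the penalty updates $\vy^{k+1}=\bar\vy^s+\beta_k(\vA\vx^{k+1}-\vb)$ and $\vz^{k+1}=[\bar\vz^s+\beta_k\vf(\vx^{k+1})]_+$, the terms $\langle\vA\vx^{k+1}-\vb,\vy^{k+1}\rangle$ and $\sum_i z_i^{k+1}f_i(\vx^{k+1})$ should convert into $\frac{\beta_k}{2}\|\vA\vx^{k+1}-\vb\|^2+\Psi_{\beta_k}(\vx^{k+1},\bar\vz^s)$ plus correction terms, after completing the square. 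Once the one-step inequality is phrased so that its right-hand side contains $\frac{\beta_k}{2}\|\vA\vx^k-\vb\|^2+\Psi_{\beta_k}(\vx^k,\bar\vz^s)$ and its left-hand side contains $\frac{\beta_k}{2}\|\vA\vx^{k+1}-\vb\|^2+\Psi_{\beta_k}(\vx^{k+1},\bar\vz^s)$, I can sum over $k=K_{s-1},\ldots,K_s-2$: the constraint-coupling difference collapses into exactly the left-hand side of \eqref{eq:sum-aug-term} from Lemma~\ref{lem:sum-aug-terms}, which is bounded by $\frac{\widehat\vareps_1 M_{\widehat\vareps_1}}{4}+\frac{\widehat\vareps_1}{2}+\frac{(\widehat\vareps_1)^2\sigma\beta(\widehat\vareps_1,\widehat\vareps_2)}{4}$. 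Meanwhile the $\frac{3\rho}{2}\|\vx^{k+1}-\vx^k\|^2$ terms, after accounting for a lower-order absorption, give the $\rho\|\vx^{k+1}-\vx^k\|^2$ on the left and the $N_s-1$ summands of $\frac{\widehat\vareps_2 D}{2}$ (or its square, depending on the bookkeeping) contribute the $\frac{(\widehat\vareps_2)^2}{4}(N_s-1)$ term.

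\textbf{The main obstacle} I anticipate is the algebraic reconciliation between the linear dual-coupling terms that arise naturally from the descent step and the quadratic penalty/$\Psi_\beta$ terms appearing in Lemma~\ref{lem:sum-aug-terms}; in particular, correctly identifying which auxiliary quantity (the $\frac{5\widehat\vareps_1 M_{\widehat\vareps_1}}{4}$ versus $\frac{\widehat\vareps_1 M_{\widehat\vareps_1}}{4}$ discrepancy between the two lemmas) accounts for the extra multiplier-boundedness contribution at the stage endpoints, and verifying that the $\frac{\widehat\vareps_2 D}{2}$ per-step slack indeed aggregates as the stated $\frac{(\widehat\vareps_2)^2}{4}(N_s-1)$ term. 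Handling the case $I_-$ index bookkeeping and the sign condition $\frac{\bar\beta_{s,t+1}-\bar\beta_{s,t}}{(\bar\beta_{s,t+1})^2}+\frac{1}{\bar\beta_{s,t+1}}-\frac{1}{\bar\beta_{s,t}}\le 0$ correctly—which is already absorbed into Lemma~\ref{lem:sum-aug-terms}—means the present proof mostly reduces to combining that lemma with the per-step descent, but the constant-tracking will require care.
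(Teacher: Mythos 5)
Your route is genuinely different from the paper's, and as written it has a gap that is not mere bookkeeping. The paper does \emph{not} start from the stationarity condition paired with $\vx^{k+1}-\vx^k$ (the route of Lemma~\ref{lem:iter-by-ialm}). Instead it uses Remark~\ref{rm:dist-x-pF}: since $\bar\cL_{\beta_k}(\cdot,\bar\vp^s)$ is $\rho$-strongly convex and AdapAPG returns an $\bar\vareps$-stationary point with $\bar\vareps=\frac{\widehat\vareps_2}{2}\min\{1,\sqrt\rho\}$, the iterate is $\frac{(\widehat\vareps_2)^2}{4}$-optimal in \emph{function value} (this is \eqref{eq:obj-dec-penalty}); comparing against the candidate $\vx=\vx^k$, at which the proximal term vanishes, immediately gives the one-step inequality with $\rho\|\vx^{k+1}-\vx^k\|^2$ on the left, the coupling terms already in the $\frac{\beta_k}{2}\|\vA\vx-\vb\|^2+\Psi_{\beta_k}(\vx,\bar\vz^s)$ form required by Lemma~\ref{lem:sum-aug-terms}, and a per-step slack of $\frac{(\widehat\vareps_2)^2}{4}$. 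Summing, invoking \eqref{eq:sum-aug-term}, and bounding the telescoped $\langle\vA\vx-\vb,\bar\vy^s\rangle$ endpoints by $\widehat\vareps_1 M_{\widehat\vareps_1}$ (which is exactly the gap between $\frac{5}{4}$ and $\frac{1}{4}$ that puzzled you) finishes the proof.

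The gap in your version is the per-step slack. Pairing the $\frac{\widehat\vareps_2}{2}$-stationarity with $\vx^{k+1}-\vx^k$ and using the diameter bound yields $\frac{\widehat\vareps_2 D}{2}$ per step, which is \emph{first order} in $\widehat\vareps_2$, not the claimed $\frac{(\widehat\vareps_2)^2}{4}$. You flag this yourself (``or its square, depending on the bookkeeping''), but it cannot be left open: the entire purpose of the PenMM steps is that their accumulated error over the $O(\vareps^{-2})$ inner iterations stays $O(1)\cdot\vareps^2$ per iteration, so that the last term in \eqref{eq:bd-sum-xk-term} is $\frac{\vareps^2}{32}$; with a first-order slack the right-hand side of \eqref{eq:bd-sum-xk-term} would be $O(\vareps)$, Theorem~\ref{thm:bd-num-stage} would fail, and the complexity would degrade to the $O(\vareps^{-3}|\log\vareps|)$ of the pure-iALM analysis in section~\ref{sec:comp-ialm}. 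Your route could in principle be repaired by replacing the Cauchy--Schwarz/diameter bound with Young's inequality, $\frac{\widehat\vareps_2}{2}\|\vx^{k+1}-\vx^k\|\le\frac{\rho}{2}\|\vx^{k+1}-\vx^k\|^2+\frac{(\widehat\vareps_2)^2}{8\rho}$, absorbing the quadratic into your $\frac{3\rho}{2}$ coefficient to leave the stated $\rho$, but this changes the constant to $\frac{(\widehat\vareps_2)^2}{8\rho}$ and you would also still need to carry out the ``completing the square'' conversion of $\langle\vy^{k+1},\vA(\vx^{k+1}-\vx^k)\rangle$ and $\sum_i z_i^{k+1}\langle\nabla f_i(\vx^{k+1}),\vx^{k+1}-\vx^k\rangle$ into the $\frac{\beta_k}{2}\|\cdot\|^2$ and $\Psi_{\beta_k}$ differences (discarding a nonnegative cross term and using convexity of $\Psi_{\beta_k}(\cdot,\bar\vz^s)$), none of which is done. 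The paper's function-value comparison avoids all of this in one line.
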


\begin{proof} 
For each $s\ge1$, we have from \eqref{eq:obj-dec-penalty} with $\vareps=\frac{\widehat\vareps_2}{2}$, $\bar\vx=\vx^k$, and $\bar\vp=\bar\vp^s$ that
\begin{align*}
&~f_0(\vx^{k+1})+\langle \vA\vx^{k+1}-\vb, \bar\vy^s\rangle + \frac{\beta_k}{2}\|\vA\vx^{k+1}-\vb\|^2+\Psi_{\beta_k}(\vx^{k+1},\bar\vz^s) + \rho\|\vx^{k+1}-\vx^k\|^2\\[0.1cm]
\le &~ f_0(\vx^k)+\langle \vA\vx^{k}-\vb, \bar\vy^s\rangle + \frac{\beta_k}{2}\|\vA\vx^{k}-\vb\|^2+\Psi_{\beta_k}(\vx^{k},\bar\vz^s) + \frac{(\widehat\vareps_2)^2}{4}.
\end{align*}
Sum up the above inequality over $k=K_{s-1}$ through $K_s-2$ and use \eqref{eq:sum-aug-term}. we have
\begin{align*}
&~f_0(\vx^{K_s-1}) - f_0(\vx^{K_{s-1}}) + \sum_{k=K_{s-1}}^{K_s-2} \rho\|\vx^{k+1}-\vx^k\|^2 \\
\le &~\langle \vA\vx^{K_{s-1}}-\vb, \bar\vy^s\rangle-\langle \vA\vx^{K_s-1}-\vb, \bar\vy^s\rangle + \frac{\widehat\vareps_1 M_{\widehat\vareps_1}}4 + \frac{\widehat\vareps_1}{2} + \frac{(\widehat\vareps_1)^2\sigma \beta(\widehat\vareps_1,\widehat\vareps_2)}{4} + \frac{(\widehat\vareps_2)^2}{4}(N_s-1).
\end{align*}
By the facts $\|\vA\vx^k-\vb\|\le \frac{\widehat\vareps_1}{2}$ and $\|\bar\vy^s\|\le M_{\widehat\vareps_1}$ for all $k$ and $s$, we obtain the desired result from the above inequality.
\end{proof}

By the progresses from the iALM and PenMM steps, we are able to bound the stage number as follows.

\begin{theorem}[bound on the stage number]\label{thm:bd-num-stage}
	Suppose that the conditions in Assumptions~\ref{assump:composite} through \ref{assump:uniform-bound} hold. Given $\vareps\in (0,\frac{1}{2}]$, let $\widehat\vareps_1 = \vareps$ and $\widehat\vareps_2 = \frac{\vareps}{2\sqrt 2}\min\big\{1, \frac {1} {\sqrt \rho}\big\}$ in Algorithm~\ref{alg:ialm-noncvx}. Assume $N_0\ge \frac{N_1}{\gamma-1}$. 
	Let $S$ be the smallest integer such that
	\begin{equation}\label{eq:cond-KS}
	\textstyle K_S\ge N_0+N_1\frac{\gamma^S-1}{\gamma-1}\ge \frac{96\rho B_{0}}{\vareps^2} + \frac{48\rho N_0 (\textstyle\frac{D}{2} + \widehat M\big)}{\vareps},\quad \text{and} \quad  \gamma^S \ge 2\frac{\widetilde C}{\vareps}\log\frac{\widetilde C}{\vareps},
	\end{equation}
	where $B_0$ the constant in Assumption~\ref{assump:uniform-bound}, $\widehat M = 2M_p+\sqrt{\sigma}\max\big\{\frac{\sqrt 5}{2}M_p,\, \sqrt{M_p},\, \textstyle\frac{1}{2\sqrt{2}}\big\}$, and
	\begin{equation}\label{eq:def-Cs}
	%\begin{aligned}
	\widetilde C = \max\Big\{ \textstyle \frac{96\rho(\gamma-1)}{N_1\log\gamma}\left(\frac{1}2 + \frac{D}{2}+ \frac{9\widehat M}4 + \frac{ \widehat C\sigma}{4}%\frac{\widehat C}4 + \frac{D}{2}+ \frac{5\widehat M}2 + \frac{ \widehat C\sigma^3}{8(\sigma-1)}
	\right),\, \vareps e\Big\},\quad \widehat C = \max\left\{\frac{16\sqrt{2}(3M_p^2+2\widehat M^2)}{\min\big\{1, \frac {1} {\sqrt \rho}\big\}},\, \frac{16\sqrt{2}(M_p+\widehat M)}{\min\big\{1, \frac {1} {\sqrt \rho}\big\}}, \, 4\right\},
	%\end{aligned}
	\end{equation}
	where $e\approx 2.718$ is the natural logarithmic base.
	Then Algorithm~\ref{alg:ialm-noncvx} must stop within $K_S$ iterations and the output $(\vx^{k+1},\vp^{k+1})$ is an $\vareps$-KKT point of \eqref{eq:ccp}. 
\end{theorem}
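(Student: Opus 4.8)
The plan is to establish correctness and complexity separately. For correctness, I would show that whenever the stopping test $\|\vx^{k+1}-\vx^k\|\le\frac{\vareps}{4\rho}$ triggers, the output is an $\vareps$-KKT point of \eqref{eq:ccp}. Since the prox term $\rho\|\cdot-\vx^k\|^2$ does not touch the constraints, the $\frac{\widehat\vareps_1}{2}$- or $\frac{\widehat\vareps_2}{2}$-KKT conditions of the subproblem \eqref{ksub} already deliver $\le\vareps$ primal-feasibility and $\le\vareps$ complementarity. For stationarity, the only discrepancy between $\partial_\vx\bar\cL_0$ of \eqref{ksub} and $\partial_\vx\cL_0$ of \eqref{eq:ccp} is the term $2\rho(\vx^{k+1}-\vx^k)$, whose norm is at most $2\rho\cdot\frac{\vareps}{4\rho}=\frac{\vareps}{2}$; adding this to the subproblem tolerance $\frac{\vareps}{2}$ gives the required $\vareps$-stationarity, exactly as in the pure-iALM discussion of section~\ref{sec:comp-ialm}.

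For the iteration bound I would argue by contradiction, assuming the test fails for all $k\le K_S-1$, so $\rho\|\vx^{k+1}-\vx^k\|^2>\frac{\vareps^2}{16\rho}$ at every step. First I telescope the objective: summing the iALM progress \eqref{eq:obj-dec-from-ialm} over the $N_0$ initial steps and the $S$ transition steps, summing the PenMM progress \eqref{eq:obj-sum-dec-from-pen} over the $S$ stages (lower-bounding the iALM coefficient $\frac{3\rho}{2}$ by $\rho$ so all squared-difference terms carry a common factor $\rho$), and using $|f_0|\le B_0$ from Assumption~\ref{assump:uniform-bound} to bound the telescoped gap by $f_0(\vx^0)-f_0(\vx^{K_S})\le 2B_0$. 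With $\widehat\vareps_1=\vareps$, the bounds $M_{\widehat\vareps_1}\le\widehat M$ (from \eqref{eq:def-M-eps} and $\vareps\le\frac12$) and $\beta(\widehat\vareps_1,\widehat\vareps_2)\le\widehat C/\vareps$ (from \eqref{eq:def-beta-eps}), the per-stage error collapses to $\vareps A$ with $A=\frac12+\frac D2+\frac{9\widehat M}{4}+\frac{\sigma\widehat C}{4}$, yielding
\begin{equation*}
\rho\sum_{k=0}^{K_S-1}\|\vx^{k+1}-\vx^k\|^2\le 2B_0+N_0\big(\textstyle\frac D2+\widehat M\big)\vareps+S\vareps A+\frac{(\widehat\vareps_2)^2}{4}K_S.
\end{equation*}

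The crucial observation, which the choice $\widehat\vareps_2=\frac{\vareps}{2\sqrt2}\min\{1,\frac1{\sqrt\rho}\}$ is engineered to produce, is that $\frac{(\widehat\vareps_2)^2}{4}\le\frac12\cdot\frac{\vareps^2}{16\rho}$, i.e.\ the per-step penalty error is at most half the stopping threshold (one checks $\min\{1,1/\rho\}\le 1/\rho$ in both regimes of $\rho$). This lets me absorb the last term into half of the left-hand side; combined with the no-stopping bound $\rho\sum\|\vx^{k+1}-\vx^k\|^2>K_S\frac{\vareps^2}{16\rho}$, I obtain $K_S<\frac{64\rho B_0}{\vareps^2}+\frac{32\rho N_0(\frac D2+\widehat M)}{\vareps}+\frac{32\rho SA}{\vareps}$. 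I then contradict this using \eqref{eq:cond-KS}: its first inequality (with constants $96,48=\frac32\cdot(64,32)$) forces the first two terms to be at most $\frac23 K_S$; for the last term I use $N_0\ge\frac{N_1}{\gamma-1}$ together with $K_S\ge N_0+N_1\frac{\gamma^S-1}{\gamma-1}$ to get the clean bound $K_S\ge\frac{N_1\gamma^S}{\gamma-1}$, and then the second inequality of \eqref{eq:cond-KS} — via the elementary fact that $x\ge 2D\log D$ with $D\ge e$ forces $\frac{x}{\log x}\ge D$ (which reduces to $\frac D2\ge\log D$) applied to $D=\widetilde C/\vareps\ge e$ — combined with $\frac{96\rho A(\gamma-1)}{N_1\log\gamma}\le\widetilde C$ gives $K_S\ge\frac{96\rho SA}{\vareps}=3\cdot\frac{32\rho SA}{\vareps}$. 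Adding the two pieces contradicts $K_S<\frac23K_S+\frac13K_S$.

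I expect the main obstacle to be this final counting step: the error $S\vareps A$ grows linearly in the stage count while the budget $K_S$ grows only geometrically in $S$, so the argument hinges on the schedule $N_{s+1}=\lceil\gamma^s N_1\rceil$ dominating and on $N_0\ge\frac{N_1}{\gamma-1}$ to remove the stray $-1$ in $\frac{\gamma^S-1}{\gamma-1}$ (without it the bound $\frac{\gamma^S-1}{\log\gamma^S}\ge D$ barely fails near $D=e$). The error-absorption step is the other delicate point: since $K_S$ is precisely the quantity being bounded, the accumulating term $\frac{(\widehat\vareps_2)^2}{4}K_S$ can only be handled because the design of $\widehat\vareps_2$ makes it half the stopping threshold, leaving a clean $\frac12$ factor on the left.
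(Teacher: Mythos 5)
Your proposal is correct and follows essentially the same route as the paper's proof: the same reduction of $\vareps$-KKT to the test $\|\vx^{k+1}-\vx^k\|\le\frac{\vareps}{4\rho}$, the same telescoping of \eqref{eq:obj-dec-from-ialm} and \eqref{eq:obj-sum-dec-from-pen} with $M_{\widehat\vareps_1}\le\widehat M$ and $\vareps\,\beta(\widehat\vareps_1,\widehat\vareps_2)\le\widehat C$, the same lower bound $K_S\ge\frac{N_1\gamma^S}{\gamma-1}$, and the same elementary $a\ge 2\log a$ argument for the second condition in \eqref{eq:cond-KS}. The only differences are presentational (a contradiction framing with a $\frac23K_S+\frac13K_S$ split and absorption of the $\frac{(\widehat\vareps_2)^2}{4}K_S$ term into half the left-hand side, versus the paper's direct $\frac{\vareps^2}{48}+\frac{\vareps^2}{96}+\frac{\vareps^2}{32}=\frac{\vareps^2}{16}$ accounting), and the constants check out either way.
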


\begin{proof} 
Notice that whichever subroutine (iALM or PenMM) is called in Algorithm~\ref{alg:ialm-noncvx}, the output $(\vx^{k+1},\vp^{k+1})$ is an $\frac{\widehat\vareps_1}{2}$-KKT point of \eqref{ksub} since $\widehat\vareps_2\le \widehat\vareps_1$. In addition, because $\widehat\vareps_1\le \vareps$, $(\vx^{k+1},\vp^{k+1})$ will be an $\vareps$-KKT point of \eqref{eq:ccp} if $2\rho\|\vx^{k+1}-\vx^k\|\le \frac{\vareps}{2}$. In the following, we claim this must happen for some $k\le K_S-1$.

Sum up \eqref{eq:obj-dec-from-ialm} over $0\le k< K_0-1$ and $k=K_s-1$ for all $0\le s \le S$, and then add the resulting inequality to the summation of \eqref{eq:obj-sum-dec-from-pen} over $0\le s\le S$. We have
\begin{align*}
&f_0(\vx^{K_S})-f_0(\vx^0) + \rho\sum_{k=0}^{K_S-1}\|\vx^{k+1}-\vx^k\|^2 \le (K_0+S)\big(\textstyle\frac{D}{2} + M_{\widehat\vareps_1}\big) \widehat\vareps_1\\
&\hspace{2cm}+ S\left(\frac{5\widehat\vareps_1 M_{\widehat\vareps_1}}4 + \frac{\widehat\vareps_1}{2} + \frac{(\widehat\vareps_1)^2\sigma \beta(\widehat\vareps_1,\widehat\vareps_2)}{4}%\frac{(\widehat\vareps_1)^2 \beta(\widehat\vareps_1,\widehat\vareps_2)}4 + \frac{3\widehat\vareps_1 M_{\widehat\vareps_1}}2 + \frac{(\widehat\vareps_1)^2\sigma^3\beta(\widehat\vareps_1,\widehat\vareps_2)}{8(\sigma-1)}
\right) + \frac{(\widehat\vareps_2)^2}{4}(K_S-K_0-S).
\end{align*}
Plugging $\widehat\vareps_1 = \vareps$ and $\widehat\vareps_2 = \frac{\vareps}{2\sqrt 2}\min\big\{1, \frac {1} {\sqrt \rho}\big\}$ into the above inequality and noticing $M_{\widehat\vareps_1}\le \widehat M$ and $\vareps \beta(\widehat\vareps_1,\widehat\vareps_2)\le \widehat C$ with this choice of $\widehat\vareps_1$ and $\widehat\vareps_2$, we have
\begin{align*}
&f_0(\vx^{K_S})-f_0(\vx^0) + \rho\sum_{k=0}^{K_S-1}\|\vx^{k+1}-\vx^k\|^2 \le (K_0+S)\big(\textstyle\frac{D}{2} + \widehat M\big) \vareps\\
&\hspace{2cm}+ S\left(\frac{5\vareps \widehat M}4 + \frac{\vareps}{2} + \frac{\vareps\sigma \widehat C}{4}%\frac{ \vareps\widehat C}4 + \frac{3\vareps\widehat M}2 + \frac{\vareps \widehat C\sigma^3}{8(\sigma-1)}
\right) + \frac{\vareps^2}{32}(K_S-K_0-S)\textstyle\min\{1,\frac{1}{\rho}\}.
\end{align*}
Since $|f_0(\vx^{K_S})-f_0(\vx^0)|\le 2 B_0$ and  $\frac{K_S-K_0-S}{K_S}\le 1$, we have from the above inequality that
\begin{align}\label{eq:bd-sum-xk-term}
\rho^2\min_{0\le k < K_s}\|\vx^{k+1}-\vx^k\|^2\le \frac{2\rho B_{0}}{K_S}+\frac{\rho K_0\big(\textstyle\frac{D}{2} + \widehat M\big) \vareps}{K_S}+\frac{\rho\vareps S}{K_S}\left(\frac{1}2 + \frac{D}{2}+ \frac{9\widehat M}4 + \frac{ \widehat C\sigma}{4}\right)+\frac{\vareps^2}{32}.
\end{align}
From the condition on $K_S$ in \eqref{eq:cond-KS} and $K_0=N_0$, it follows that $\frac{2\rho B_{0}}{K_S}+\frac{\rho K_0(\frac{D}{2} + \widehat M) \vareps}{K_S}\le \frac{\vareps^2}{48}$. In addition, it holds $a\ge 2\log a$, or equivalently $\frac{\log(2a\log a)}{2a\log a} \le \frac{1}{a}$ for any $a\ge 1$. By this inequality with $a = \frac{\widetilde C}{\vareps}$, we have $\frac{\log \gamma^{\widetilde S}}{\gamma^{\widetilde S}} \le \frac{\vareps}{\widetilde C}$ for $\widetilde S>0$ such that $\gamma^{\widetilde S}=2a\log a$, and thus $\frac{\widetilde S}{\gamma^{\widetilde S}}\le \frac{\vareps}{\widetilde C\log\gamma}$. By the choice of $\widetilde C$, it holds $\frac{\widetilde C}{\vareps}\ge e$ and thus $\gamma^{\widetilde S}\ge 1$. Since $\frac{x}{\gamma^x}$ is decreasing with respect to $x\ge \frac{1}{\log\gamma}$, we have $\frac{S}{\gamma^S}\le \frac{\widetilde S}{\gamma^{\widetilde S}}\le \frac{\vareps}{\widetilde C\log\gamma}$ if $S$ satisfies the second condition in \eqref{eq:cond-KS}. Recall that $K_S\ge N_0+\frac{N_1(\gamma^S-1)}{\gamma-1}$. It holds $K_S\ge \frac{N_1\gamma^S}{\gamma-1}$ because $N_0\ge \frac{N_1}{\gamma-1}$. Hence, from the definition of $\widetilde C$ in \eqref{eq:def-Cs}, the inequality $\frac{S}{\gamma^S}\le \frac{\vareps}{\widetilde C\log\gamma}$ implies $\frac{\rho\vareps S}{K_S}\left(\frac{1}2 + \frac{D}{2}+ \frac{9\widehat M}4 + \frac{ \widehat C\sigma}{4}\right)\le \frac{\vareps^2}{96}$. Therefore, we have from \eqref{eq:bd-sum-xk-term} that $\min_{0\le k < K_S}\|\vx^{k+1}-\vx^k\|\le \frac{\vareps}{4\rho}$, i.e., Algorithm~\ref{alg:ialm-noncvx} must stop within $K_S$ iterations. %and we complete the proof.
\end{proof}

\begin{remark}
	The requirements $\vareps \le \frac{1}{2}$ and $N_0\ge \frac{N_1}{\gamma-1}$ in Theorem~\ref{thm:bd-num-stage} are for the convenience of analysis. We can have similar results for any $\vareps>0$ and any positive integers $N_0$ and $N_1$. In addition, we call the subroutines iALM and PenMM to obtain $\frac{\vareps}{2}$-KKT solutions of the subproblems. This is also for convenience of the analysis. Notice that the first three terms on the right hand side of \eqref{eq:bd-sum-xk-term} are constants or in the order of $\vareps$. Hence, for any constant $c\in(0,1)$, if the subroutines can return a $c\vareps$-KKT solution of each subproblem and the last term $\frac{\vareps^2}{32}$ in \eqref{eq:bd-sum-xk-term} is pushed to be less than $\frac{(1-c)^2\vareps^2}{4}$, we can still guarantee an $\vareps$-KKT solution of \eqref{eq:ccp} within the same order of complexity.
\end{remark}

Now we are ready to state the total complexity result.
\begin{theorem}[complexity results for nonconvex cases]\label{thm:iter-ialm-noncvx}
	Suppose that the conditions in Assumptions~\ref{assump:composite} through \ref{assump:uniform-bound} hold. Given $\vareps\in (0,\frac{1}{2}]$, let $\widehat\vareps_1 = \vareps$ and $\widehat\vareps_2 = \frac{\vareps}{2\sqrt 2}\min\big\{1, \frac {1} {\sqrt \rho}\big\}$ in Algorithm~\ref{alg:ialm-noncvx}. Assume $N_0\ge \frac{N_1}{\gamma-1}$. Then Algorithm~\ref{alg:ialm-noncvx} can produce an $\vareps$-KKT point of \eqref{eq:ccp} with at most $T_{\mathrm{total}}=(N_0+S)\lceil T_{\mathrm{iALM}} \rceil+ (K_S-N_0-S)\lceil T_{\mathrm{PenMM}}\rceil $ evaluations of gradient and function value, where $S$ is the smallest integer such that \eqref{eq:cond-KS} holds, and	
	\begin{align*}
	&T_{\mathrm{iALM}} = 6\sqrt{\gamma_1}\left( \sqrt{\kappa}K_{\mathrm{iALM}}+\widetilde{C}_{\vareps} \right)\log \left( \frac{2(1+\gamma_1)\widehat{L}_{\bar{\vareps}_1}D\sqrt{\frac{\gamma_1 \hat{L}_\vareps}{\rho}+1}}{\bar{\vareps}_1} \right) + \left( 3\widehat{E}+3 \right)K_{\mathrm{iALM}} , \\
	&T_{\mathrm{PenMM}} = 6\alpha_K\sqrt{\frac{\gamma_1}{\rho}}\log \left( \frac{2(1+\gamma_1)\bar{L}D\sqrt{\frac{\gamma_1 \bar{L}}{\rho}+1}}{\bar{\vareps}_2} \right) + \left(3\bar{E}+3\right)K_{\mathrm{PenMM}}.
	\end{align*}
	\color{black}
	Here, $\bar\vareps_1=\sqrt{\frac{\sigma-1}{\sigma+1}}\frac{\vareps}{2}\min\{1,\sqrt\rho\}$, $\bar\vareps_2=\frac{\vareps}{4\sqrt 2}\min\big\{\sqrt\rho, \frac {1} {\sqrt \rho}\big\}$, and
	\begin{align*}
	&\textstyle K_{\mathrm{iALM}}= \lceil\log_\sigma 
	\widehat C_\vareps\rceil + 1,\ K_{\mathrm{PenMM}}=\left\lceil \log_\sigma \frac{\bar\beta}{\beta_0}\right\rceil + 1,\ \kappa=\frac{L_0+2\rho+2M_p\sqrt{\textstyle\sum_{i=1}^mL_i^2}}{\rho}, \\
	&\textstyle \widehat C_\vareps=\max\left\{\frac{20 M_p^2}{\beta_0\vareps},\, \frac{16M_p}{\beta_0\vareps},\, \frac{4}{\beta_0}\right\},\quad \widehat L_\vareps=L_0+2\rho+\beta_0\sigma L_c \widehat C_\vareps +\sqrt{\sum_{i=1}^mL_i^2}\left(2 M_p +\frac{\vareps}{2\sqrt 2}\sqrt{\beta_0 \widehat C_\vareps}\right),\\
	&\textstyle \bar L= L_0+2\rho+L_c\sigma\bar\beta+\widehat M \sqrt{\sum_{i=1}^mL_i^2}, \quad \bar\beta =\max\left\{\frac{16\sqrt{2}(3M_p^2+2\widehat M^2)}{\vareps\min\big\{1, \frac {1} {\sqrt \rho}\big\}},\, \frac{16\sqrt{2}(M_p+\widehat M)}{\vareps\min\big\{1, \frac {1} {\sqrt \rho}\big\}}, \, 8\right\},\\
	& \textstyle \widetilde{C}_{\vareps} = \frac{\sqrt{\beta_0 L_c \sigma^2 \widehat C_\vareps}}{\sqrt{\rho}(\sqrt{\sigma}-1)}+ \left(\sqrt{\sum_{i=1}^{m}L_i^2} \frac{\vareps \sqrt{\beta_0}}{\sqrt{2\sigma}} \right)^{\frac{1}{2}} \frac{\sigma^{\frac{1}{2}}\widehat C_\vareps^{\frac{1}{4}}}{\sqrt{\rho}(\sigma^{\frac{1}{4}}-1)},\\
	& \textstyle \alpha_K = K_{\mathrm{PenMM}} \sqrt{2\rho}+K_{\mathrm{PenMM}} \left(L_0+ \widehat{M} \sqrt{\sum_{i=1}^{m}L_i^2}\right)^{\frac{1}{2}}+\frac{\sigma \sqrt{\bar{\beta}L_c}}{\sqrt{\sigma}-1},\\
	& \textstyle \widehat{E} = \left\lceil \log_{\gamma_1}\frac{\widehat{L}_{\vareps}}{\rho} \right\rceil,\quad \bar{E} = \left\lceil \log_{\gamma_1}\frac{\bar{L}}{\rho} \right\rceil,
	\end{align*}
	with $\widehat M$ defined in Theorem~\ref{thm:bd-num-stage}, $M_{p}$ in \eqref{eq:bd-opt-pk}, and $L_c$ in \eqref{eq:def-Lc-kappac}.
\end{theorem}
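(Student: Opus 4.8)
The plan is to treat this as an assembly result: Theorem~\ref{thm:bd-num-stage} already controls how many proximal-point (PP) subproblems are solved, and Theorems~\ref{thm:iter-eps-kkt} and \ref{thm:complexity-pen} already control the cost of a single iALM or PenMM call, so the work is to count the two types of calls and to make the two per-call bounds uniform across all subproblems. By Theorem~\ref{thm:bd-num-stage}, with $\widehat\vareps_1=\vareps$ and $\widehat\vareps_2=\frac{\vareps}{2\sqrt2}\min\{1,\frac{1}{\sqrt\rho}\}$, Algorithm~\ref{alg:ialm-noncvx} terminates within $K_S$ PP iterations and its output is an $\vareps$-KKT point of \eqref{eq:ccp}, where $S$ is the smallest integer satisfying \eqref{eq:cond-KS}. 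Since the algorithm may halt earlier (when some $\|\vx^{k+1}-\vx^k\|\le\frac{\vareps}{4\rho}$), every count below is an upper bound on the number of calls actually made, so the resulting $T_{\mathrm{total}}$ is a valid worst-case bound.

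For the count I would read off the control flow of Algorithm~\ref{alg:ialm-noncvx}. In the initial stage $k=0,\ldots,K_0-1$ every subproblem is solved by iALM, giving $K_0=N_0$ iALM calls. In each later stage $s=1,\ldots,S$, the inner iterations $k=K_{s-1},\ldots,K_s-2$ are handled by PenMM and only the terminal iteration $k=K_s-1$ by iALM, i.e.\ $N_s-1$ PenMM calls and one iALM call per stage. Summing yields $N_0+S$ iALM calls and $\sum_{s=1}^{S}(N_s-1)=K_S-N_0-S$ PenMM calls, which are exactly the two coefficients in the claimed expression for $T_{\mathrm{total}}$.

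Next I would bound each call. Each iALM call uses tolerance $\frac{\widehat\vareps_1}{2}=\frac{\vareps}{2}$ on a subproblem \eqref{ksub}, whose dual solution is uniformly bounded by $M_p$ via Lemma~\ref{lem:u-bound-dual} and \eqref{eq:bd-opt-pk}; substituting $\|\bar\vp^*\|\le M_p$ and the induced inner-APG tolerance $\bar\vareps_1$ into Theorem~\ref{thm:iter-eps-kkt} converts $\widehat C_\vareps,\kappa,\widehat L_\vareps,\widetilde C_\vareps,\widehat E$ and $K$ into the displayed subproblem-independent constants and $K_{\mathrm{iALM}}$, bounding the call by $\lceil T_{\mathrm{iALM}}\rceil$. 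Each PenMM call uses tolerance $\frac{\widehat\vareps_2}{2}$ and estimated multiplier $\bar\vp^s$; with $\|\bar\vp^*\|\le M_p$, $\|\bar\vp^s\|\le\widehat M$ from Lemma~\ref{lem:bd-est-p}, hence $\|\bar\vp^*-\bar\vp^s\|\le M_p+\widehat M$, and the elementary inequality $2M_p\widehat M\le M_p^2+\widehat M^2$, the penalty threshold $\bar\beta$ of Theorem~\ref{thm:bd-betak} is dominated by the displayed $\bar\beta$, and Theorem~\ref{thm:complexity-pen} then yields $\bar L,\alpha_K,\bar E,K_{\mathrm{PenMM}}$ and the per-call bound $\lceil T_{\mathrm{PenMM}}\rceil$. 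Multiplying each per-call bound by its call count and adding gives the stated $T_{\mathrm{total}}$.

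The step needing the most care is making these per-call bounds genuinely uniform. The cost of a single subproblem depends on the norm of its dual solution and on the penalty parameter actually in force, both of which drift across stages; the uniform bounds $M_p$ and $\widehat M$ (Lemmas~\ref{lem:u-bound-dual} and \ref{lem:bd-est-p}) are precisely what eliminate this dependence. A secondary subtlety is that PenMM is warm-started with the carried-over penalty $\bar\beta_{s,t}\ge\beta_0$ rather than $\beta_0$: I would argue that replacing the warm start by the global $\beta_0$ can only enlarge $K_{\mathrm{PenMM}}=\lceil\log_\sigma\frac{\bar\beta}{\beta_0}\rceil+1$, so $\lceil T_{\mathrm{PenMM}}\rceil$ remains a valid worst-case per-call bound, while the geometric growth by $\sigma$ keeps every penalty parameter below $\sigma\bar\beta$ so that the Lipschitz-type constants $\bar L$ and $\alpha_K$ stay uniformly controlled. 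Once uniformity is secured, the remaining algebra is the routine substitution of $M_p$, $\widehat M$, $\bar\vareps_1$, and $\bar\vareps_2$ described above.
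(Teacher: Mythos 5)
Your proposal is correct and follows essentially the same route as the paper's proof: invoke Theorem~\ref{thm:bd-num-stage} for the $K_S$ bound and the $\vareps$-KKT guarantee, count $N_0+S$ iALM calls and $K_S-N_0-S$ PenMM calls from the control flow, and instantiate Theorems~\ref{thm:iter-eps-kkt} and \ref{thm:complexity-pen} with the uniform dual bounds $M_p$ and $\widehat M$ and the tolerances $\frac{\widehat\vareps_1}{2}$ and $\frac{\widehat\vareps_2}{2}$. Your explicit verification that $\|\bar\vp^*\|^2+\|\bar\vp^*-\bar\vp^s\|^2\le 3M_p^2+2\widehat M^2$ and your remark on the warm-started penalty only making $K_{\mathrm{PenMM}}$ a valid over-count are details the paper leaves implicit, but they do not change the argument.
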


\begin{proof} 
From Theorem~\ref{thm:bd-num-stage}, it follows that Algorithm~\ref{alg:ialm-noncvx} must stop within $K_S$ iteration and produce an $\vareps$-KKT point of \eqref{eq:ccp}. In addition, notice that within the $K_S$ iterations, iALM is called $K_0+S$ times, and PenMM is called $K_S-K_0-S$ times. Hence, it suffices to bound the number of APG iterations that each call of iALM and PenMM takes. The formula of $T_{\mathrm{iALM}}$ directly follows from Theorem~\ref{thm:iter-eps-kkt}, with $\vareps$ replaced by $\frac\vareps 2$, $\mu$ replaced by $\rho$,  $L_0$ replaced by $L_0+2\rho$, and $\|\vp^*\|$ replaced by $M_p$ that is an upper bound for each $\|\vp_k^*\|$ in \eqref{eq:bd-opt-pk}. The formula of $T_{\mathrm{PenMM}}$ directly follows from Theorem~\ref{thm:complexity-pen}, with $\vareps$ replaced by $\frac{\vareps}{4\sqrt 2}\min\big\{1, \frac {1} {\sqrt \rho}\big\}$, $\|\bar\vp^*\|$ replaced by $M_p$, and $\|\bar\vp\|$ replaced by $\widehat M$ (since $\|\bar\vp^s\|\le \widehat M$ for each $s$).
\end{proof}

\begin{remark}
	Let $\vareps\in(0,1)$ be given. It is not difficult to see that $T_{\mathrm{iALM}}=O(\frac{1}{\sqrt{\vareps}}\log\frac{1}{\vareps})$ and $T_{\mathrm{PenMM}}=O(\frac{1}{\sqrt{\vareps}}\log\frac{1}{\vareps})$. Furthermore, it follows from \eqref{eq:cond-KS} that $K_S=O(\vareps^{-2})$. Hence, the overall complexity in terms of $\vareps$ is $O(\vareps^{-\frac{5}{2}}\log\frac{1}{\vareps})$. 	
	We can also investigate the dependence of $T_{\mathrm{total}}$ on other constants. Let $B_{\max} = \max_{ i \in [m]} (B_i+L_i)$, with $L_i$ and $B_i$ defined in \eqref{lip-i} and \eqref{P3-eqn-33}. Observe $T_{\mathrm{total}} = O(K_S)O(T_{\mathrm{PenMM}})$, $K_S = O(\frac{\rho B_0}{\vareps^2})$ and $T_{\mathrm{PenMM}} = O(\frac{\alpha_K}{\sqrt{\rho}}\log \vareps^{-1})$, and $\alpha_K = O(\sqrt{\bar{\beta}L_c}) = O(M_p (\Vert A \Vert + \sqrt{m} B_{\max}) \vareps^{-\frac{1}{2}})$. Hence, the overall complexity %in terms of $\vareps$ and some other constants 
	is $O(M_p B_0 (\Vert A \Vert + \sqrt{m} B_{\max}) \sqrt{\rho} \vareps^{-\frac{5}{2}}\log\frac{1}{\vareps})$, where $B_0$ is defined in Assumption \ref{assump:uniform-bound}, and $M_p$ is given in \eqref{eq:bd-opt-pk}.
	\color{black}
\end{remark}

\section{Numerical experiments}\label{sec:experiment}

In this section, we do experiments to demonstrate the numerical performance of the proposed methods. The tests were conducted on the nonconvex linearly constrained quadratic program (LCQP), clustering by doubly stochastic matrix problem (CLUS), and nonconvex quadratically constrained quadratic program (QCQP). 
We compare the proposed methods 
%to the modified iALM in \cite{lu2018iteration} on the QCQP and 
to the QP-AIPP in \cite{kong2019complexity-pen} and the iPPP in \cite{lin2019inexact-pp}. All the tests were performed in MATLAB 2017a on an iMAC with 4 cores and 8GB memory.

%\subsection{Experiments on nonconvex linearly constrained quadratic programs} \label{sec:experiment-ncvx}

%Below we conduct numerical experiments. We used MATLAB version 2019b. All experiments were conducted on a 2.9 GHz quad-core Intel Core i7 Macbook Pro (15-inch, 2017) with 16 Gigabyte memory.\\
%In this subsection, 
\subsection{Experiments on nonconvex linearly constrained quadratic programs}\label{sec:qp}
In this subsection, we test\footnote{More tests and more numerical results can be found in the longer arXiv version \cite{li2020augmented}.} the proposed method on solving nonconvex linearly constrained quadratic program (LCQP):
\begin{equation}\label{eq:ncQP}
\min_{\vx \in \mathbb{R}^n}  \frac{1}{2} \vx^\top \vQ \vx + \vc^\top  \vx,\ 
\text{s.t. } \vA\vx=\vb,\ x_i \in [l_i,u_i],\,\forall\, i\in [n],
\end{equation}
where $\vA \in \mathbb{R}^{m\times n}$, and $\vQ\in\RR^{n\times n}$ is symmetric and indefinite (thus the objective is nonconvex). In the test, we generated all data randomly. The smallest eigenvalue of $\vQ$ is $-\rho < 0$, and thus the problem is $\rho$-weakly convex. For all tested instances, we set $l_i=0$ and $u_i=5$ for each $i\in [n]$.

We generated one group of LCQP instances of size $m=100$ and $n=1,000$. It consists of three subgroups corresponding to $\rho\in\{0.1, 1, 10\}$. In general, the bigger $\rho$ is, the harder the problem is. We tested the HiAPeM in Algorithm~\ref{alg:ialm-noncvx} and compared it to the QP-AIPP method in \cite{kong2019complexity-pen}. For HiAPeM, we set $N_1=2$ and tried different values of $N_0\in \{1, 10, 100\}$. Recall that $N_0$ is the number of calling the iALM as the subroutine in the initial stage. In general, the iALM is better than a pure-penalty method on solving the same functional constrained problem. We expected that the more the iALM was called in the initial stage, the sooner a solution of a desired-accuracy would be obtained. In addition, we set $\gamma=1.1$ and $\beta_k=\sigma^k\beta_0$ with $\sigma=3$ and $\beta_0=0.01$ for all instances. {Since we can compute the smoothness constant of each subproblem in Algorithm~\ref{alg:ialm-noncvx}, we set $L_{\min}$ to the computed smoothness constant, and the line search was never performed in Algorithm~\ref{alg2}.} For QP-AIPP, we set its parameters as required by the theorems in \cite{kong2019complexity-pen}. However, we must relax the stopping condition in \cite[Eq. (53)]{kong2019complexity-pen} by using a tolerance larger than required, because otherwise the number $A_k$ in its ACG subroutine may overflow in MATLAB (larger than $10^{308}$). For this same reason, we must set each $l_i=0$ in all instances. We noticed that if we set $l_i$ to a negative number, QP-AIPP also had the overflow issue. The targeted accuracy was set to $\vareps=10^{-3}$ for all instances.

For HiAPeM, we report the primal residual, dual residual, running time (in second), and the number of gradient evaluation, shortened as \verb|pres|, \verb|dres|, \verb|time|, and \verb|#Grad|, respectively. For QP-AIPP, we also report the number of objective evaluation \verb|#Obj|. Notice that HiAPeM does not need to evaluate the objective value, and thus its \verb|#Obj| equals 0. %The results for smaller-sized instances are shown in Tables~\ref{table:qp-small-01} through \ref{table:qp-small-10} with $\rho=0.1, 1, 10$ respectively, and 
The results are shown in Tables~\ref{table:qp-large}. We make three observations from the results. First, all the compared methods took more time as $\rho$ increased, and this indicates that bigger $\rho$ yields harder problems. Second, the proposed HiAPeM performs better with larger $N_0$, which coincides with our expectation. Thirdly, for each subgroup of instances, the proposed method is faster than the QP-AIPP method on average.

\setlength{\tabcolsep}{3pt}

\begin{table}\caption{Results by the proposed algorithm HiAPeM with $N_1 = 2$ and three different choices of $N_0$ and by the QP-AIPP method in \cite{{kong2019complexity-pen}} on solving a $\rho$-weakly convex QP \eqref{eq:ncQP} of size $m=100$ and $n=1000$, where $\rho \in\{0.1,1.0,10\}$.}\label{table:qp-large}
	\begin{center}
		\resizebox{\textwidth}{!}{
			\begin{tabular}{|c||cccc|cccc|cccc|ccccc|}
				\hline
				trial & pres & dres & time & \#Grad & pres & dres & time & \#Grad & pres & dres & time & \#Grad & pres & dres & time & \#Obj & \#Grad\\\hline\hline 
				&\multicolumn{4}{|c|}{HiAPeM with $N_0=100$} & \multicolumn{4}{|c|}{HiAPeM with $N_0=10$} & \multicolumn{4}{|c|}{HiAPeM with $N_0=1$} & \multicolumn{5}{|c|}{QP-AIPP}\\\hline
				\multicolumn{18}{|c|}{weak convexity constant: $\rho=0.1$}\\\hline
				1 	 & 6.05e-4 & 8.02e-4 & 270.06 & 779651 & 5.55e-4 & 6.20e-4 & 456.31 & 1839499 & 6.37e-4 & 7.50e-4 & 529.08 & 2252436 & 7.86e-4 & 3.02e-5 & 396.45 & 773458 & 773469\\
				2 	 & 8.36e-5 & 9.33e-4 & 114.10 & 314697 & 3.55e-4 & 9.74e-4 & 172.44 & 668977 & 7.78e-4 & 5.66e-4 & 555.81 & 2401904 & 8.72e-4 & 2.26e-5 & 399.29 & 787592 & 787603\\
				3 	 & 5.25e-5 & 9.42e-4 & 166.54 & 493007 & 4.77e-4 & 6.95e-4 & 265.95 & 1157951 & 5.31e-4 & 5.30e-4 & 270.64 & 1223062 & 8.18e-4 & 6.51e-5 & 322.37 & 640008 & 640019\\
				4 	 & 1.83e-4 & 5.32e-4 & 234.24 & 686201 & 8.04e-4 & 9.44e-4 & 312.89 & 1292083 & 3.14e-4 & 6.84e-4 & 379.33 & 1640810 & 9.98e-4 & 2.92e-5 & 224.27 & 449510 & 449521\\
				5 	 & 5.70e-5 & 9.06e-4 & 111.45 & 292782 & 3.72e-4 & 3.95e-4 & 177.32 & 666678 & 6.44e-5 & 9.36e-4 & 231.67 & 939466 & 6.42e-4 & 7.68e-5 & 357.85 & 716670 & 716681\\\hline
				avg. & 1.96e-4 & 8.23e-4 & 179.28 & 513268 & 5.13e-4 & 7.26e-4 & 276.98 & 1125038 & 4.65e-4 & 6.93e-4 & 393.31 & 1691536 & 8.23e-4 & 4.48e-5 & 340.05 & 673448 & 673459\\\hline
				\multicolumn{18}{|c|}{weak convexity constant: $\rho=1.0$}\\\hline
				1 	 & 7.99e-6 & 9.80e-4 & 445.52 & 1485421 & 1.44e-4 & 7.00e-4 & 530.74 & 2285016 & 2.46e-5 & 9.62e-4 & 556.64 & 2439467 & 7.33e-4 & 2.30e-5 & 1087.35 & 2199916 & 2199929\\
				2 	 & 6.26e-5 & 9.99e-4 & 175.13 & 381934 & 5.01e-5 & 9.42e-4 & 220.31 & 884453 & 3.92e-4 & 9.28e-4 & 325.25 & 1389783 & 8.43e-4 & 2.50e-5 & 1483.14 & 2993824 & 2993837\\
				3 	 & 2.85e-5 & 9.45e-4 & 246.58 & 628786 & 8.52e-6 & 8.06e-4 & 698.68 & 2937267 & 4.52e-4 & 8.89e-4 & 965.49 & 4131220 & 7.63e-4 & 2.53e-5 & 1252.89 & 2534358 & 2534371\\
				4 	 & 2.08e-5 & 8.96e-4 & 197.90 & 421591 & 8.60e-6 & 8.14e-4 & 363.88 & 1464673 & 1.20e-5 & 6.74e-4 & 621.74 & 2548256 & 8.34e-4 & 2.46e-5 & 1294.27 & 2617348 & 2617361\\
				5 	 & 3.23e-4 & 9.66e-4 & 515.85 & 1825923 & 4.66e-6 & 6.57e-4 & 762.97 & 3265620 & 3.73e-4 & 7.10e-4 & 1520.34 & 6690393 & 7.26e-4 & 2.28e-5 & 722.80 & 1457100 & 1457113\\\hline
				avg. & 8.86e-5 & 9.58e-4 & 316.20 & 948731 & 4.31e-5 & 7.84e-4 & 515.32 & 2167406 & 2.51e-4 & 8.32e-4 & 797.89 & 3439824 & 7.80e-4 & 2.42e-5 & 1168.09 & 2360509 & 2360522\\\hline
				\multicolumn{18}{|c|}{weak convexity constant: $\rho=10$}\\\hline
				1 	 & 4.63e-6 & 8.85e-4 & 765.01 & 2773432 & 6.91e-5 & 9.37e-4 & 1109.39 & 4715496 & 3.00e-4 & 9.41e-4 & 1176.39 & 5072715 & 9.24e-4 & 8.10e-5 & 1067.83 & 2163914 & 2163930\\
				2 	 & 3.31e-4 & 8.57e-4 & 1023.72 & 4152972 & 3.71e-5 & 7.86e-4 & 1277.90 & 5806754 & 4.21e-4 & 9.35e-4 & 1134.51 & 5112349 & 9.89e-4 & 8.84e-5 & 1519.88 & 3087706 & 3087722\\
				3 	 & 4.04e-5 & 8.48e-4 & 420.37 & 1347740 & 3.08e-4 & 9.90e-4 & 717.14 & 3112172 & 3.95e-5 & 7.84e-4 & 776.35 & 3402113 & 9.44e-4 & 8.67e-5 & 1293.43 & 2625136 & 2625152\\
				4 	 & 1.12e-5 & 9.45e-4 & 255.00 & 561620 & 4.52e-5 & 9.32e-4 & 461.22 & 1890121 & 1.06e-5 & 9.55e-4 & 536.69 & 2270553 & 9.67e-4 & 8.96e-5 & 1846.89 & 3746176 & 3746192\\
				5 	 & 2.05e-5 & 9.32e-4 & 412.18 & 1245989 & 5.05e-5 & 9.90e-4 & 1052.02 & 4472420 & 3.60e-5 & 9.86e-4 & 1102.85 & 4703986 & 8.98e-4 & 8.83e-5 & 2014.29 & 4087306 & 4087322\\\hline
				avg. & 8.15e-5 & 8.94e-4 & 575.26 & 2016351 & 1.02e-4 & 9.27e-4 & 923.53 & 3999393 & 1.61e-4 & 9.20e-4 & 945.36 & 4112343 & 9.44e-4 & 8.68e-5 & 1548.46 & 3142048 & 3142064\\\hline
			\end{tabular}
		}
	\end{center}
\end{table}

\subsection{Experiments on clustering by doubly stochastic matrix problem}\label{sec:clus}
% Are we sure to put these results? Seems too bad to present.

In this subsection, we test the proposed method HiAPeM and compare it to the QP-AIPP method in \cite{kong2019complexity-pen} on the clustering by doubly stochastic matrix problem (CLUS) given in \cite{yang2012clustering}: 
\begin{equation}\label{eq:clus_sp}
\min_{\vX = \vX^\top, \vX \ge 0} -\sum_{i,j}A_{ij}\log \sum_k x_{ik} x_{jk} - (\alpha - 1)\sum_{i,j}\log x_{ij},
\text{ s.t. } \vX \vone = \vone,
\end{equation}
where $\alpha > 1$, %$l_i > 0, \forall i$, 
and $\vA \in \RR^{n \times n}$ is a symmetric doubly stochastic matrix, i.e., $\vA\in\mathbb{SP}^n := \{\vX \in \RR^{n\times n}\, |\, \vX \ge 0, \vX = \vX^\top, \vX \vone = \vone\}$. 
Observe that any feasible $\vX \in \RR^{n \times n}$ of \eqref{eq:clus_sp} is a doubly stochastic matrix. In the test, we chose $\alpha\in \{1.2, 2, 5\}$ and $n\in\{100, 500\}$, and we generated the matrix $\vA$ approximately uniformly at random from $\mathbb{SP}^n$. If all the components of $\vX$ are greater than a positive number, the first term in the objective of \eqref{eq:clus_sp} is weakly convex. However, different from \eqref{eq:ncQP}, it is not trivial to compute the weak convexity constant $\rho$ of \eqref{eq:clus_sp}. Hence, we tuned it by picking the best one from $\{1, 10, 100\}$ such that that the tested algorithm could converge fastest. This way, we set $\rho=1$ for HiAPeM and $\rho=100$ for QP-AIPP. In addition, we set $L_{\min}=\rho$, $\gamma_1=2$, and $\gamma_2=1.25$ for HiAPeM. %has a nontrivial weak convexity constant $\rho$. In our experiments we set $\rho = 1$ after some tuning, as it achieves both speed and stability.

%We set $n = 500, \alpha = 2$ and generated 5 independent trials. We test the HiAPeM described in section~\ref{sec:hybrid} and compared it to the QP-AIPP in \cite{kong2019complexity-pen}. 
We used the target accuracy $\vareps=10^{-3}$ for both methods and stopped them once an $\vareps$-KKT point was reached. For HiAPeM, we adopted two settings, with $N_0 = 10$ and $N_0 = 1$ respectively, and set other parameters to the same values as those in the previous subsection. We also tested HiAPeM with $N_0 = 100$, but we noticed that for the CLUS problem, it could always stop within several stages, and thus this setting would produce the same result as that with $N_0 = 10$. For all trials, we report the primal residual, dual residual, complementarity violation, running time (in second), number of objective evaluation, and number of gradient evaluation, which are evaluated at the last iterate and shortened as \verb|pres|, \verb|dres|, \verb|compl|, \verb|time|, \verb|#Obj|, and \verb|#Grad|, respectively. 
The results for all trials are shown in Table~\ref{table:clus}, where for the instance with $(\alpha, n)=(5, 500)$, QP-AIPP did not stop within 2 hour. From the results, %Comparing the results by the proposed HiAPeM method to those by the QP-AIPP method in \cite{kong2019complexity-pen}, 
we see that to produce an $\vareps$-KKT point, the proposed HiAPeM requires significantly fewer evaluations of objective and gradient than QP-AIPP in all trials except the one with $(\alpha, n)=(1.2, 100)$. In addition, we notice that an instance with a bigger $\alpha$ appears easier for HiAPeM but harder for QP-AIPP. This is probably because different values of $\rho$ were used for the two methods.

\begin{table}\caption{Results by the proposed algorithm HiAPeM with $N_1 = 2$ and two different choices of $N_0$ and by the QP-AIPP method in \cite{{kong2019complexity-pen}} on solving instances of the clustering problem \eqref{eq:clus_sp} with different pairs of  $(\alpha, n)$. For the instance with $(\alpha, n)=(5, 500)$, QP-AIPP could not stop within 2 hour.}\label{table:clus}
	\begin{center}
		\resizebox{\textwidth}{!}{
			\begin{tabular}{|c||ccccc|ccccc|ccccc|ccccc|}
				\hline
			instance	 & pres & dres & time & \#Obj & \#Grad & pres & dres & time & \#Obj & \#Grad & pres & dres & time & \#Obj & \#Grad \\\hline\hline 
$(\alpha, n)$				&\multicolumn{5}{|c|}{HiAPeM with $N_0=10$} & \multicolumn{5}{|c|}{HiAPeM with $N_0=1$} & \multicolumn{5}{|c|}{QP-AIPP} \\\hline
$(1.20, 100)$ & 1.88e-5 & 4.04e-4 & 10.63 & 23436 & 18179 & 1.88e-5 & 3.60e-4 & 23.90 & 54928 & 44598 & 5.37e-4 & 6.65e-8 & 11.51 & 27842 & 13935\\
$(2.00, 100)$ & 1.21e-4 & 1.97e-4 & 8.91 & 23436 & 18176 & 1.21e-4 & 1.32e-4 & 9.80 & 33160 & 25847 & 6.23e-4 & 1.73e-8 & 20.95 & 49944 & 24988\\
$(5.00, 100)$ & 3.58e-4 & 2.21e-4 & 7.14 & 19286 & 14919 & 3.58e-4 & 1.10e-4 & 6.99 & 25598 & 19863 & 6.13e-4 & 2.36e-7 & 33.76 & 84326 & 42181\\\hline
$(1.20, 500)$ & 2.70e-4 & 2.49e-4 & 122.22 & 12808 & 9868 & 2.70e-4 & 3.15e-4 & 145.03 & 18284 & 14211 & 8.70e-4 & 4.90e-6 & 1061.0 & 103158 & 51598\\
$(2.00, 500)$ & 1.92e-5 & 4.90e-4 & 102.27 & 12100 & 9227 & 1.92e-5 & 4.47e-4 & 118.25 & 13418 & 10627 & 5.35e-4 & 2.05e-5 & 1674.2 & 167468 & 83756\\
$(5.00, 500)$ & 6.95e-5 & 4.10e-4 & 66.80 & 8376 & 6371 & 1.52e-4 & 4.57e-4 & 71.35 & 8506 & 6750 & -- & -- & -- & -- & --
\\\hline
			\end{tabular}
		}
	\end{center}
\end{table}

%\begin{table}\caption{Results by the proposed algorithm HiAPeM with $N_1 = 2$ and two different choices of $N_0$ and by the QP-AIPP method in \cite{{kong2019complexity-pen}} on solving a clustering problem \eqref{eq:clus_sp} of size $n=500$}\label{table:clus}
%	\begin{center}
%		\resizebox{\textwidth}{!}{
%			\begin{tabular}{|c||ccccc|ccccc|ccccc|ccccc|}
%				\hline
%				trial & pres & dres & time & \#Obj & \#Grad & pres & dres & time & \#Obj & \#Grad & pres & dres & time & \#Obj & \#Grad \\\hline\hline 
%				&\multicolumn{5}{|c|}{HiAPeM with $N_0=10$} & \multicolumn{5}{|c|}{HiAPeM with $N_0=1$} & \multicolumn{5}{|c|}{QP-AIPP} \\\hline
%				1 	 & 1.92e-5 & 4.90e-4 & 130.19 & 11862 & 9038 & 1.92e-5 & 4.47e-4 & 133.25 & 12630 & 9956 & 5.35e-4 & 2.05e-5 & 1715.7 & 167472 & 83758 \\
%				2 	 & 1.92e-5 & 4.90e-4 & 96.39 & 11844 & 9026 & 1.92e-5 & 4.47e-4 & 108.90 & 13012 & 10304 & 5.35e-4 & 2.05e-5 & 1659.1 & 167468 & 83756 \\
%				3 	 & 1.92e-5 & 4.90e-4 & 94.19 & 11848 & 9028 & 1.92e-5 & 4.47e-4 & 100.83 & 12114 & 9522 & 5.35e-4 & 2.05e-5 & 1647.1 & 167472 & 83758 \\
%				4 	 & 1.92e-5 & 4.90e-4 & 96.63 & 11858 & 9035 & 1.92e-5 & 4.47e-4 & 108.70 & 13036 & 10315 & 5.35e-4 & 2.05e-5 & 1741.6 & 167472 & 83758 \\
%				5 	 & 1.92e-5 & 4.90e-4 & 99.25 & 12014 & 9153 & 1.92e-5 & 4.47e-4 & 113.00 & 12810 & 10101 & 5.35e-4 & 2.05e-5 & 1736.8 & 167472 & 83758 \\\hline
%				avg. & 1.92e-5 & 4.90e-4 & 103.33 & 11885 & 9056 & 1.92e-5 & 4.47e-4 & 112.94 & 12720 & 10040 & 5.35e-4 & 2.05e-5 & 1700.1 & 167471 & 83758 \\\hline
%			\end{tabular}
%		}
%	\end{center}
%\end{table}

\subsection{Experiments on nonconvex quadratically constrained quadratic program}\label{sec:qcqp}

In this subsection, we test the proposed HiAPeM method in Algorithm~\ref{alg:ialm-noncvx} and compare it to the iPPP in \cite{lin2019inexact-pp} on the nonconvex quadratically constrained quadratic program (QCQP):
\begin{equation}\label{eq:qcqp}
\min_{\vx \in \mathbb{R}^n} \frac{1}{2} \vx^\top \vQ_{0}\vx + \vc_{0}^\top  \vx,
\text{ s.t. } \frac{1}{2} \vx^\top \vQ_{j}\vx + \vc_j^\top  \vx + d_j \leq 0,\ \forall\, j \in [m];\ x_i \in [l_i,u_i],\ \forall\, i\in [n].
\end{equation}
Here, $\vQ_j$ is positive semidefinite, $\forall j \in \{1, \cdots, m\}$, but $\vQ_0$ is indefinite with the smallest eigenvalue $ -\rho < 0$. Hence, the objective is $\rho$-weakly convex. %\\
In the test, we set $m = 10$ and $n = 1000$, and all matrices and vectors were generated randomly. For each $i$, we set $l_i=-5$ and $u_i=5$. For each $j$, $d_j$ was made negative, and thus the Slater's condition holds. 

Three groups of QCQP instances were generated, with $\rho = 0.1$, $\rho = 1$, and $\rho = 10$ respectively. Each group consists of 5 independent trials. %We tested the HiAPeM described in section~\ref{sec:hybrid} and compared it to the iPPP in \cite{lin2019inexact-pp}. 
We set $\vareps=10^{-3}$ for both methods and stopped them once an $\vareps$-KKT point was produced. For HiAPeM, we adopted three settings, with $N_0 = 100, N_0 = 10$ and $N_0 = 1$ respectively, where $N_0$ is the number of iALM calls in the initial stage of Algorithm~\ref{alg:ialm-noncvx}. In addition, we set $N_1 = 2$, $\beta_0=0.01$ and $\sigma=3$. %and terminated it once an $\vareps$-KKT point was obtained. 
For the iPPP method, we set $\beta_k=\beta_0 \sqrt{k}$, where $\beta_0$ was tuned to $10$ in order to have fast convergence. %and terminated it once an $\vareps$-KKT point was obtained. This setting appeared good for the method. 
For all instances, we report the primal residual, dual residual, complementarity violation, running time (in second), number of evaluations on objective and gradient, which are evaluated at the last iterate and shortened as \verb|pres|, \verb|dres|, \verb|compl|, \verb|time|, \verb|#Obj|, and \verb|#Grad|, respectively. 
The results for all instances are shown in Table~\ref{table:qcqp}. %Comparing the results by the proposed HiAPeM method to those by the iPPP method in \cite{lin2019inexact-pp}, 
From the results, we see that to produce a target-accurate KKT point, our proposed method (in all the adopted settings) requires significantly fewer objective and gradient evaluations than the iPPP method in all trials. This advantage is more significant as $\rho$ gets larger (that indicates harder problems) when $N_0=100$ or $N_0=10$ was set for the proposed HiAPeM method. %In addition, the proposed HiAPeM method performs better with larger $N_0$, as we expected.

\begin{table}\caption{Results by the proposed algorithm HiAPeM with $N_1 = 2$ and three different choices of $N_0$ and by the iPPP method in \cite{{lin2019inexact-pp}} on solving instances of $\rho$-weakly convex QCQP \eqref{eq:qcqp} of size $m=10$ and $n=1000$, where $\rho \in\{0.1,1.0,10\}$.}\label{table:qcqp}
	\begin{center}
		\resizebox{\textwidth}{!}{
			\begin{tabular}{|c||ccccc|ccccc|ccccc|ccccc|}
				\hline
				trial & pres & dres & time & \#Obj & \#Grad & pres & dres & time & \#Obj & \#Grad & pres & dres & time & \#Obj & \#Grad & pres & dres & time & \#Obj & \#Grad\\\hline\hline 
				&\multicolumn{5}{|c|}{HiAPeM with $N_0=100$} & \multicolumn{5}{|c|}{HiAPeM with $N_0=10$} & \multicolumn{5}{|c|}{HiAPeM with $N_0=1$} & \multicolumn{5}{|c|}{iPPP in \cite{lin2019inexact-pp}}\\\hline
				\multicolumn{21}{|c|}{weak convexity constant: $\rho=0.1$}\\\hline
				1 	 & 7.22e-5 & 4.81e-4 & 38.75 & 8524 & 7360 & 7.22e-5 & 4.81e-4 & 39.15 & 8524 & 7360 & 8.14e-5 & 2.29e-4 & 91.12 & 19712 & 17149 & 1.00e-3 & 8.31e-4 & 691.86 & 155636 & 131196\\
				2 	 & 8.80e-5 & 5.44e-4 & 38.30 & 8360 & 7217 & 8.80e-5 & 5.44e-4 & 37.81 & 8360 & 7217 & 7.94e-5 & 2.63e-4 & 87.79 & 19268 & 16758 & 9.99e-4 & 8.52e-4 & 676.70 & 152798 & 128625\\
				3 	 & 6.20e-5 & 5.27e-4 & 37.90 & 8352 & 7210 & 6.20e-5 & 5.27e-4 & 37.80 & 8352 & 7210 & 1.06e-4 & 2.40e-4 & 83.60 & 18654 & 16220 & 9.99e-4 & 8.67e-4 & 683.97 & 153926 & 129947\\
				4 	 & 8.68e-5 & 5.26e-4 & 40.61 & 8452 & 7301 & 8.69e-5 & 5.26e-4 & 38.35 & 8452 & 7301 & 1.02e-4 & 2.27e-4 & 87.47 & 19436 & 16907 & 9.99e-4 & 8.81e-4 & 682.32 & 153350 & 129038\\
				5 	 & 7.58e-5 & 5.05e-4 & 39.08 & 8652 & 7471 & 7.58e-5 & 5.05e-4 & 39.22 & 8652 & 7471 & 8.51e-5 & 2.30e-4 & 87.16 & 19494 & 16956 & 1.00e-3 & 7.83e-4 & 704.86 & 159150 & 134166\\\hline
				avg. & 7.70e-5 & 5.17e-4 & 38.93 & 8468 & 7312 & 7.70e-5 & 5.17e-4 & 38.47 & 8468 & 7312 & 9.08e-5 & 2.38e-4 & 87.42 & 19313 & 16798 & 9.99e-4 & 8.43e-4 & 687.94 & 154972 & 130594\\\hline
				\multicolumn{21}{|c|}{weak convexity constant: $\rho=1.0$}\\\hline
				1 	 & 2.84e-4 & 6.88e-4 & 63.03 & 13900 & 11882 & 2.84e-4 & 9.73e-4 & 53.14 & 11784 & 10091 & 1.42e-6 & 9.28e-4 & 182.94 & 41008 & 35613 & 9.99e-4 & 7.91e-4 & 1071.7 & 244976 & 204995\\
				2 	 & 3.82e-4 & 9.38e-4 & 60.71 & 13564 & 11605 & 1.41e-5 & 8.51e-4 & 52.73 & 11436 & 9795 & 1.04e-5 & 7.94e-4 & 178.61 & 40246 & 34980 & 1.00e-3 & 7.69e-4 & 1034.2 & 235678 & 195468\\
				3 	 & 3.10e-4 & 8.41e-4 & 64.71 & 14352 & 12291 & 3.10e-4 & 8.05e-4 & 55.51 & 12266 & 10512 & 1.34e-5 & 7.15e-4 & 182.82 & 40840 & 35471 & 9.99e-4 & 8.79e-4 & 1079.6 & 246566 & 205662\\
				4 	 & 3.20e-4 & 7.88e-4 & 65.21 & 14506 & 12420 & 3.20e-4 & 7.55e-4 & 55.95 & 12414 & 10639 & 1.10e-5 & 6.96e-4 & 185.46 & 41090 & 35691 & 1.00e-3 & 8.63e-4 & 1087.5 & 248176 & 206763\\
				5 	 & 3.04e-4 & 8.23e-4 & 62.75 & 14356 & 12287 & 3.04e-4 & 7.80e-4 & 54.53 & 12308 & 10545 & 1.48e-5 & 7.08e-4 & 178.87 & 41266 & 35844 & 1.00e-3 & 8.74e-4 & 1078.7 & 246562 & 205520\\\hline
				avg. & 3.20e-4 & 8.16e-4 & 63.28 & 14136 & 12097 & 2.46e-4 & 8.33e-4 & 54.37 & 12042 & 10316 & 1.02e-5 & 7.68e-4 & 181.74 & 40890 & 35520 & 1.00e-3 & 8.35e-4 & 1070.3 & 244392 & 203682\\\hline
				\multicolumn{21}{|c|}{weak convexity constant: $\rho=10$}\\\hline
				1 	 & 3.15e-4 & 8.47e-4 & 117.85 & 26796 & 22728 & 4.07e-6 & 9.20e-4 & 147.38 & 33670 & 28964 & 4.48e-6 & 9.16e-4 & 972.67 & 201724 & 176373 & 1.75e-3 & 8.01e-4 & 5232.3 & 1195082 & 912498\\
				2 	 & 4.17e-4 & 8.70e-4 & 113.41 & 25892 & 21993 & 4.31e-6 & 9.79e-4 & 142.64 & 32752 & 28176 & 5.02e-6 & 7.78e-4 & 857.67 & 198256 & 173336 & 1.82e-3 & 2.48e-4 & 5024.1 & 1135550 & 861027\\
				3 	 & 3.05e-4 & 9.54e-4 & 116.00 & 26406 & 22401 & 4.37e-6 & 8.80e-4 & 160.75 & 37004 & 31887 & 2.73e-6 & 8.96e-4 & 866.56 & 200770 & 175511 & 1.77e-3 & 2.38e-4 & 4935.1 & 1131958 & 857562\\
				4 	 & 4.26e-4 & 8.47e-4 & 117.89 & 26964 & 22906 & 5.84e-6 & 8.10e-4 & 147.36 & 33912 & 29186 & 2.47e-6 & 7.97e-4 & 880.45 & 203596 & 177998 & 1.79e-3 & 2.86e-4 & 4805.8 & 1102934 & 832237\\
				5 	 & 3.59e-4 & 9.18e-4 & 114.93 & 26192 & 22216 & 3.38e-6 & 8.79e-4 & 146.80 & 33816 & 29095 & 2.89e-6 & 8.75e-4 & 882.05 & 204200 & 178541 & 1.77e-3 & 4.51e-4 & 5307.9 & 1218004 & 932373\\\hline
				avg. & 3.64e-4 & 8.87e-4 & 116.02 & 26450 & 22449 & 4.39e-6 & 8.94e-4 & 148.99 & 34231 & 29462 & 3.52e-6 & 8.52e-4 & 891.88 & 201709 & 176352 & 1.78e-3 & 4.05e-4 & 5061.0 & 1156706 & 879139\\\hline
			\end{tabular}
		}
	\end{center}
\end{table}

\subsection{Performance sensitivity on algorithm parameters}
%Our method (Algorithm~\ref{alg:ialm-noncvx}) requires the knowledge of some parameters (e.g. $L_{\min}, \gamma_1, \gamma_2$) in its updates. However, the exact values of these parameters is usually unknown for general class of functions. Naive estimates of them could potentially slow down the performance of the algorithm. 
In Algorithm~\ref{alg:ialm-noncvx}, there are several hyper-parameters. Theoretically, we can arbitrarily choose them to satisfy the required conditions. However, a few efforts are often needed to tune them to have the ``best'' performance. In this subsection, we demonstrate, by more experimental results, how the choices of the parameters affect the performance of the proposed method. $L_{\min}, \gamma_1$ and $\gamma_2$ are the input parameters in the core subroutine Algorithm~\ref{alg2}. %is the core of the proposed HiAPeM method, and it requires input of . 
We use one QCQP instance to show how these parameters affect the overall performance of HiAPeM. %Algorithm~\ref{alg:ialm-noncvx} on the QCQP problem in Section \ref{sec:qcqp}.

We generated a QCQP instance of size $n = 1000, m = 10$ and with a weak convexity constant $\rho = 1$. %and test one instance of our method 
We simply set $N_0 = 100, N_1 = 2$ and the target accuracy $\vareps=10^{-3}$, and we varied $L_{\min} \in \{ 1, 5, 10 \}, \gamma_1 \in \{ 1.5, 2, 2.5 \}, \gamma_2 = \{1.1, 1.25, 2\}$. This leads to $27$ different combinations. %For each combination, we report the primal residual, dual residual, complementarity violation, running time (in second), number of objective evaluation, and number of gradient evaluation, which are evaluated at the last iterate and shortened as \verb|pres|, \verb|dres|, \verb|compl|, \verb|time|, \verb|#Obj|, and \verb|#Grad|, respectively. 
The results are shown in Table~\ref{table:param}. For each $L_{\min}$, we highlight the best pair of $(\gamma_1,\gamma_2)$ in \textbf{bold}, and for each $\gamma_2$, we highligh the best pair of $(L_{\min},\gamma_1)$ in \textbf{\emph{italic bold}}. From the results, we see that the performance of the algorithm changes with respect to the choices of parameters, but it is not severely sensitive, at least the numbers of objective/gradient evaluations are all in the same magnitude. %We see that to produce an $\vareps$-KKT point, the setting with $L_{\min} = 10\rho, \gamma_1 = 2.5, \gamma_2 = 1.1$ requires the fewest objective and gradient evaluations. 

\begin{table}\caption{Results by HiAPeM with $N_0 = 100$ and $N_1 = 2$ on solving a $1$-weakly convex QCQP \eqref{eq:clus_sp} of size $m = 10$ and $n=1000$, with $L_{\min} \in \{ 1, 5, 10 \}, \gamma_1 \in \{ 1.5, 2, 2.5 \}, \gamma_2 = \{1.1, 1.25, 2\}$}\label{table:param}
	\begin{center}
		\resizebox{\textwidth}{!}{
			\begin{tabular}{|c|ccccc||c|ccccc||c|ccccc|}
				\hline
				$(L_{\min},\gamma_1, \gamma_2)$ & pres & dres & time & \#Obj & \#Grad & 
				$(L_{\min},\gamma_1, \gamma_2)$ & pres & dres & time & \#Obj & \#Grad &
				$(L_{\min},\gamma_1, \gamma_2)$ & pres & dres & time & \#Obj & \#Grad
				\\\hline\hline 
				$(1,1.5,1.1)$ 	 & 2.79e-4 & 7.39e-4 & 60.31 & 13372 & 11449 &
				$(1,1.5,1.25)$ 	 & 2.78e-4 & 7.30e-4 & 70.07 & 16270 & 12849 &
				$(1,1.5,2)$ 	 & 2.76e-4 & 7.47e-4 & 116.06 & 27090 & 18287 \\
				$(1,2,1.1)$ 	 & 2.79e-4 & 7.46e-4 & 49.65 & 11496 & 10424 &
				$(1,2,1.25)$ 	 & 2.79e-4 & 7.34e-4 & 57.47 & 13286 & 11351 &
				$(1,2,2)$ 	 & 2.64e-4 & 7.60e-4 & 92.23 & 21738 & 16114 \\
				$\textbf{(1,2.5,1.1)}$ 	 & 2.72e-4 & 7.42e-4 & \textbf{48.76} & \textbf{11278} & \textbf{10469} &
				$(1,2.5,1.25)$ 	 & 2.80e-4 & 7.37e-4 & 55.32 & 12432 & 10974 &
				$(1,2.5,2)$ 	 & 2.81e-4 & 7.40e-4 & 70.76 & 16610 & 12878 \\\hline
				$(5,1.5,1.1)$ 	 & 2.76e-4 & 7.43e-4 & 55.68 & 12896 & 11206 &
				$(5,1.5,1.25)$ 	 & 2.75e-4 & 7.32e-4 & 68.00 & 15992 & 12784 &
				$(5,1.5,2)$ 	 & 2.76e-4 & 7.25e-4 & 114.28 & 26706 & 18099 \\
				$(5,2,1.1)$ 	 & 2.78e-4 & 7.25e-4 & 49.06 & 11426 & 10455 &
				$(5,2,1.25)$ 	 & 2.75e-4 & 7.33e-4 & 55.89 & 13034 & 11214 &
				$(5,2,2)$ 	 & 2.78e-4 & 7.30e-4 & 81.24 & 19028 & 14145 \\
				$\textbf{(5,2.5,1.1)}$ 	 & 2.79e-4 & 7.42e-4 & \textbf{47.59} & \textbf{11066} & \textbf{10347} &
				$(5,2.5,1.25)$ 	 & 2.81e-4 & 7.38e-4 & 54.01 & 12202 & 10844 &
				$\textbf{(\emph{5,2.5,2})}$ 	 & 2.75e-4 & 7.32e-4 & \textbf{\emph{69.69}} & \textbf{\emph{16456}} & \textbf{\emph{12813}} \\\hline
				$(10,1.5,1.1)$ 	 & 2.79e-4 & 7.36e-4 & 54.43 & 12678 & 11087 &
				$(10,1.5,1.25)$ 	 & 2.80e-4 & 7.43e-4 & 66.80 & 15702 & 12611 &
				$(10,1.5,2)$ 	 & 2.78e-4 & 7.48e-4 & 113.37 & 26628 & 18082 \\
				$(10,2,1.1)$ 	 & 2.78e-4 & 7.25e-4 & 48.71 & 11318 & 10401 &
				$(10,2,1.25)$ 	 & 2.75e-4 & 7.33e-4 & 55.62 & 12926 & 11160 &
				$(10,2,2)$ 	 & 2.78e-4 & 7.30e-4 & 80.00 & 18920 & 14091 \\
				$\textbf{(\emph{10,2.5,1.1})}$ 	 &2.76e-4 &7.28e-4 &\textbf{\emph{47.16}} &\textbf{\emph{10948}} &\textbf{\emph{10282}} &%\color{black}
				$\textbf{(\emph{10,2.5,1.25})}$ 	 & 2.74e-4 & 7.38e-4 & \textbf{\emph{52.48}} & \textbf{\emph{12232}} & \textbf{\emph{10906}} &
				$(10,2.5,2)$ 	 & 2.74e-4 & 7.26e-4 & 71.46 & 16420 & 12811 \\\hline
			\end{tabular}
		}
	\end{center}
\end{table}

\section{Concluding remarks}\label{sec:conclusion} 

We have established an iteration complexity result in a nonergodic sense of an AL-based first-order method for solving nonlinear weakly-convex functional constrained problems. Given $\vareps>0$, our method requires $O(\vareps^{-\frac{5}{2}}|\log {\vareps}|)$ proximal gradient steps to produce an $\vareps$-KKT solution. The result is so far the best and in the same order of that achieved by a pure-penalty method. Numerically, we demonstrated the significant superiority of our method over a pure-penalty method.

\bibliographystyle{abbrv}
\bibliography{optim}

\end{document}